\newcommand{\ig}[2]{\vcenter{\xy (0,0)*{\includegraphics[scale=#1]{./EPS/#2}} \endxy}}
\newcommand{\igs}[1]{\ig{.6}{#1}}
\newcommand{\igm}[1]{\ig{.9}{#1}}
\newcommand{\igb}[1]{\ig{1.1}{#1}}
\newtheorem{lettertheorem}{Theorem}
\newtheorem{thm}{Theorem}[section] 
\newtheorem{prop}[thm]{Proposition}
\newtheorem{lem}[thm]{Lemma}
\newtheorem{cor}[thm]{Corollary}
\theoremstyle{definition}
\newtheorem{defn}[thm]{Definition}
\newtheorem{ex}[thm]{Example}
\theoremstyle{remark}
\newtheorem{rem}[thm]{Remark}
\newcommand{\eb}{\mathbf{e}}
\def\Z{\mathbb{Z}}
\def\Q{\mathbb{Q}}
\def\hat{\widehat}
\newcommand{\ot}{\otimes}
\newcommand{\co}{\colon}
\newcommand{\pa}{\partial}
\DeclareMathOperator{\Hom}{Hom}
\DeclareMathOperator{\End}{End}
\DeclareMathOperator{\id}{id}
\DeclareMathOperator{\Id}{Id}
\newcommand{\im}{\operatorname{Im}}
\newcommand{\at}{{\mathtt{a}}}
\newcommand{\mi}{\underline}
\newcommand{\ma}{\overline}
\newcommand{\expr}{\leftrightharpoons}
\newcommand{\mt}{\emptyset}
\newcommand{\Frob}{\mathbf{Frob}}
\newcommand{\SBSBim}{\mathbf{SBSBim}}
\newcommand{\simto}{\stackrel{\sim}{\to}}
\newcommand{\LL}{\mathbb{LL}}
\newcommand{\calD}{\mathcal{D}}
\newcommand{\ELL}{\mathbb{ELL}}
\newcommand{\DLL}{\mathbb{DLL}}
\DeclareMathOperator{\BS}{BS}
\DeclareMathOperator{\leftred}{LR}
\DeclareMathOperator{\rightred}{RR}
\DeclareMathOperator{\core}{core}
\DeclareMathOperator{\quot}{qu}
\DeclareMathOperator{\PF}{DL}
\DeclareMathOperator{\Span}{Span}
\DeclareMathOperator{\new}{new}
\newcommand{\symm}{\mathbf{S}}
\newcommand{\ffk}{{\mathbb{F}}}
\newcommand{\sberry}{\textcolor{WildStrawberry}{s}}
\newcommand{\teal}{\textcolor{teal}{t}}
\title{The atomic Leibniz rule}
\author[Ben Elias]{Ben Elias}
\address{Department of Mathematics, Fenton Hall, University of Oregon,
	Eugene, OR, 97403-1222, USA}
\email{belias@uoregon.edu}%\thanks{Blah blah}
\author[Hankyung Ko]{Hankyung Ko}
\address{Department of Mathematics, Uppsala University,
	Box. 480,
	SE-75106, Uppsala, Sweden}
\email{hankyung.ko@math.uu.se}
\author[Nicolas Libedinsky]{Nicolas Libedinsky}
\address{Departamento de Matemáticas, Facultad de Ciencias, Universidad de Chile}
\email{nlibedinsky@gmail.com}
\author[Leonardo Patimo]{Leonardo Patimo}
\address{Dipartimento di Matematica, Universit\`a di Pisa, Italy}
\email{leonardo.patimo@unipi.it}
\begin{document}
	
\begin{abstract} The Demazure operator associated to a simple reflection satisfies the twisted Leibniz rule. In this paper we introduce a generalization of the twisted Leibniz rule for the Demazure operator associated to any atomic double coset. We prove that this atomic Leibniz rule is equivalent to a polynomial forcing property for singular Soergel bimodules. \end{abstract}

\maketitle

\section{Introduction}

We introduce a generalization of the twisted Leibniz rule which applies to Demazure operators associated with certain double cosets. We call it the atomic Leibniz rule,  and it should play an important role in an eventual description of the singular Hecke category by generators and relations. This result is the algebraic heart which we extract from singular Soergel bimodules and transplant to their diagrammatic calculus.

\subsection{}
 In this paper, we work with general Coxeter systems and quite general actions of these groups on polynomial rings. For ease of exposition,  in this section, we assume that  $R = \Q[x_1, \ldots, x_n]$, with the standard action of the symmetric group $W = \symm_n$. Let $s_i$ be the simple reflection swapping $i$ and $i+1$, and let $S = \{s_1, \ldots, s_{n-1}\}$ be the set of simple reflections. There is a \emph{Demazure operator} $\pa_{s_i} \co R \to R$ defined by the formula
\begin{equation} \label{demazuredefn} \pa_{s_i}(f) = \frac{f - s_i f}{x_i - x_{i+1}}. \end{equation}
This operator famously satisfies a \emph{twisted Leibniz rule}
\begin{equation}\label{eq:twistedLeibniz} \pa_{s_i}(fg) = s_i(f) \pa_{s_i}(g) + \pa_{s_i}(f) g. \end{equation}
	
Because the operators $\pa_{s_i}$ satisfy the braid relations, one can unambiguously define an operator $\pa_w$ associated with any $w \in \symm_n$ by composing the operators $\pa_{s_i}$ along a reduced expression. When computing $\pa_w(fg)$, one could apply the twisted Leibniz rule repeatedly to obtain a complicated generalization of \eqref{eq:twistedLeibniz} for $\pa_w$. We discuss this in \S\ref{ssec:leibnizpermutations}.
	
What we provide in this paper is the natural generalization of the twisted Leibniz rule to the setting of double cosets. In double coset combinatorics, the analogues of simple reflections are called atomic cosets. We prove a version of \eqref{eq:twistedLeibniz} for Demazure operators attached to atomic cosets. 
%Simply put, we provide a simpler formula for $\pa_w(fg)$ for particular $w \in \symm_n$, when the polynomials $f$ and $g$ are appropriately symmetric. Our formula has an interpretation using the theory of singular Soergel bimodules.

% To our knowledge, there is no less-naive generalization of \eqref{eq:twistedLeibniz} for general $w \in \symm_n$ which is either known or expected.
	
\subsection{} We recall some definitions so as to precisely state our first theorem. Let $(W,S)$ be any Coxeter system. For any subset $I \subset S$, let $W_I$ be the parabolic subgroup of $W$ generated by $I$. We assume that $I$ is \emph{finitary}, i.e. that $W_I$ is a finite group.    Let $R^I \subset R$ be the subring of polynomials invariant under $W_I$. Let $w_I$ denote the longest element of $W_I$. For two subsets $I, J \subset S$, a double coset $p \in W_I \backslash W / W_J$ will be called an \emph{$(I,J)$-coset}. Any $(I,J)$-coset $p$ has a unique minimal element $\mi{p} \in W$ and a unique maximal element $\ma{p} \in W$ with respect to the Bruhat order.
	
In \cite[\S 3.4]{KELP1}, we introduced a Demazure operator $\pa_p : R^J \to R^I$ for any $(I,J)$-coset $p$. In fact, $\pa_p$ is equal to $\pa_w$ for $w = \ma{p} w_J \in W$. Normally one views $\pa_w$ as a map $R \to R$, 
but when the source of this map is restricted from $R$ to $R^J$, then the image is contained in $R^I$. When $I = J = \mt$, so that $p = \{w\}$ for some $w \in W$, then $\pa_p = \pa_w$.
	
In \cite{KELP3} we introduced the notion of an \emph{atomic} $(I,J)$-coset.
Briefly stated, $\at$ is atomic if \begin{itemize}
\item there exists $s, t \in S$ (possibly equal) with $s \notin I$ and $t \notin J$ such that $I \cup \{s\} = J \cup \{t\} =: M$.
\item $W_M$ is a finite group, $\ma{\at} = w_M$ and $w_M s = t w_M$.
\end{itemize}
In particular, the subsets $I$ and $J$ are conjugate under both $\ma{\at}$ and $\mi{\at}$. If $f \in R^J$ then $\mi{\at}(f) \in R^I$. We also have $\mi{\at} = \ma{\at} w_J$, so that $\pa_\at$ is the restriction of  $\pa_{\mi{\at}}$. 
	
%Atomic cosets are, in several ways, the generalization of simple reflections to the realm of double cosets.
When $I = J = \mt$, the atomic $(I,J)$-cosets have the form $\{s\}$ for $s \in S$. Atomic cosets in $\symm_n$ can be described using cabled crossings. For example, the permutation with one-line notation $(345671289)$ crosses the first two numbers past the next five while fixing the remaining numbers. It is a prototypical example of $\mi{\at}$ for an atomic coset $\at$; in this example $W_I \cong \symm_2 \times \symm_5 \times \cdots$ and $W_J \cong \symm_5 \times \symm_2 \times \cdots$ and $W_M \cong \symm_7 \times \cdots$. 

\subsection{}

Our story began with the defining representation of $\symm_n$ over $\Q$. Then we considered the polynomial ring $R$, its invariant subrings, and the structure derived therefrom. Alternatively, one can start with a \emph{realization} of a Coxeter system $(W,S)$, which is effectively a ``reflection representation'' of $W$ over a commutative base ring $\Bbbk$, equipped with a choice of roots and coroots. From this representation we construct its polynomial ring $R$, Demazure operators, etcetera. See \S\ref{ss.realizations} for details.

For each realization\footnote{Soergel's construction depends only on a reflection representation of $W$, and not a choice of roots and coroots. Elias and Williamson gave a presentation of Soergel's category in \cite{Soergelcalculus} which depends on a choice of realization. The Demazure operators also depend on the choice of roots and coroots.}, Soergel defined a monoidal category of $R$-bimodules called Soergel bimodules, and proved that for certain realizations\footnote{Specifically, the base ring $\Bbbk$ should be an infinite field of characteristic not equal to $2$, and the representation should be (faithful and) reflection-faithful.} (that we call \emph{SW-realizations})
one obtains a categorification of the Hecke algebra. Soergel bimodules and related categorifications of the Hecke algebra are objects of critical importance in geometric representation theory. The following is the first main result of this paper (Theorem~\ref{thm:main}).
\begin{lettertheorem}[The atomic Leibniz rule] \label{thm:alr} 
 Let $(W,S)$ be any Coxeter system equipped with an  SW-realization, or the symmetric group  $\symm_n$ with $R=\mathbb{Z}[x_1,\ldots,x_n].$ 
 Let $\at$ be an atomic $(I,J)$-coset and $\leq$ the Bruhat order for double cosets\footnote{The Bruhat order on $(I,J)$-cosets can be defined by $q \le \at$ if and only if $\mi{q} \le \mi{\at}$. See \cite[Thm. 2.16]{KELP2} for equivalent definitions. 
 Note that $q$ need not be atomic, so $\pa_q$ need not equal (the restriction of) $\pa_{\mi{q}}$.}. For every  $q<\at$ 
 there is a unique $R^{I\cup J}$-linear operator on polynomials denoted  $T^\at_q$, satisfying the equation
\begin{equation}\label{atomicprototype}
\pa_\at(f \cdot g) = \mi{\at}(f) \pa_\at(g) + \sum_{q < \at} \pa_q(T^\at_q(f) \cdot g) \end{equation}
for all  $f,g\in R^J.$ Polynomials in the image of $T^\at_q$ are appropriately invariant, see Definition \ref{defn:atomicLeibniz} for details. When $\at$ is fixed, we will often write $T_q := T_q^\at$.
\end{lettertheorem}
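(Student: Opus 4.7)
\emph{Uniqueness.} The plan is to establish uniqueness first. Given two families $(T_q)$ and $(T_q')$ satisfying \eqref{atomicprototype}, their difference $D_q:=T_q-T_q'$ satisfies $\sum_{q<\at}\pa_q(D_q(f)\cdot g)=0$ for all $f,g\in R^J$. Fixing $f$, this becomes the identity $\sum_{q<\at}\pa_q(c_q\cdot g)=0$ for all $g\in R^J$, where $c_q:=D_q(f)$ is a fixed polynomial of appropriate invariance. I would prove uniqueness by downward induction on the Bruhat order on $(I,J)$-cosets: picking a Bruhat-maximal $q^\circ$ with $c_{q^\circ}\ne 0$, and isolating its contribution by post-composing with a further Demazure operator $\pa_v$ chosen so that $\pa_v\pa_{q^\circ}$ corresponds to a reduced composition while $\pa_v\pa_{q'}$ is non-reduced (and hence zero) for all $q'<q^\circ$; specializing $g$ judiciously then extracts the required contradiction.

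\emph{Existence.} Fix a reduced expression $\mi{\at}=s_{i_1}\cdots s_{i_k}$ and iterate the twisted Leibniz rule \eqref{eq:twistedLeibniz} to obtain
$$\pa_{\mi{\at}}(fg)=\sum_{E\subseteq\{1,\dots,k\}}L_E(f)\cdot R_E(g),$$
where $R_E$ is the composition of the simple Demazure operators $\pa_{s_{i_j}}$ for $j\in E$, and $L_E$ is the complementary mixed composition that uses $s_{i_j}$ at positions $j\in E$ and $\pa_{s_{i_j}}$ at positions $j\notin E$. The full subexpression $E=\{1,\dots,k\}$ produces exactly $\mi{\at}(f)\cdot\pa_\at(g)$. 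For each proper $E$ the subword $w_E=\prod_{j\in E}s_{i_j}$ (taken in order) lies in some $(I,J)$-double coset $q_E<\at$. I would then group the subexpressions by $q_E$ and collapse each group into a single expression of the form $\pa_{q_E}(T_{q_E}(f)\cdot g)$, making repeated use of the $W_J$-invariance of $f$ and of the atomicity conditions $\ma{\at}=w_M$, $w_Ms=tw_M$, and $I\cup\{s\}=J\cup\{t\}=M$ to merge the mixed compositions into a single operator.

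\emph{Main obstacle.} The hard step is this consolidation: repackaging each group of subexpression terms into the single form $\pa_q(T_q(f)\cdot g)$, with $T_q$ independent of $g$ and with $T_q(f)$ having the invariance necessary for $\pa_q(T_q(f)\cdot g)$ to make sense, uses atomicity in an essential way—in particular the conjugation symmetry $w_M I w_M=J$ implemented by $\ma{\at}$, which matches left-invariance properties on the $f$ side with right-invariance properties needed to apply $\pa_q$. A complementary route, hinted at by the abstract, is to prove instead the equivalent polynomial forcing property for singular Soergel bimodules and read the $T_q$'s off from the categorical structure; the uniqueness statement (proved first) then guarantees that the resulting operators are intrinsic, independent of the chosen reduced expression or of any auxiliary construction.
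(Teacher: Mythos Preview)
Your uniqueness sketch is reasonable and in the right spirit; the paper argues it somewhat differently, by identifying $\End(B_\at)\cong R^I\otimes_{R^M}R^J$ and proving that the elements $\pa_{\ma{q}w_J^{-1}}(b_q\cdot\Delta^J_{M,(1)})\otimes\Delta^J_{M,(2)}$ for varying $q<\at$ are linearly independent (this is a consequence of the double leaves basis of \cite{KELP4}, transported from the fraction field in the almost-SW case). Your post-composition idea is close to what is used in the divisibility Lemma~\ref{Demazuredivisibility}, but the clean uniqueness statement in the paper comes from this bimodule picture rather than from a direct triangularity argument on Demazure operators.

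The existence part, however, has a genuine gap. Your plan is to expand $\pa_{\mi{\at}}(fg)$ via iterated twisted Leibniz, group the resulting subexpression terms according to the $(I,J)$-coset $q_E$ of $w_E$, and then ``collapse each group into a single expression of the form $\pa_{q_E}(T_{q_E}(f)\cdot g)$.'' That collapsing is precisely the content of the theorem, and you give no mechanism for it beyond invoking atomicity and the conjugation symmetry $w_MIw_M=J$. The paper emphasizes exactly this point: after Example~\ref{ex:S2S2} it remarks that what is \emph{not obvious} is that the many subexpression terms simplify to leave only those indexed by the cosets $q<\at$; and Example~\ref{ex:leibnizforq} exhibits a non-atomic coset for which the analogous consolidation provably fails. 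Nothing in your outline explains what distinguishes the atomic case at the level of this combinatorial regrouping.

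The paper does \emph{not} attempt this direct consolidation. Instead it takes what you call the ``complementary route'' as the main argument: for an SW-realization it proves polynomial forcing for $\at$ using Williamson's standard filtrations on singular Soergel bimodules (the short exact sequence \eqref{sesBa} together with $B_\at/\Gamma_{<\at}B_\at\cong R_\at$ and the hom formula), and then invokes the equivalence of Theorem~\ref{thm.AL=PF} to obtain the atomic Leibniz rule with unique $T_q$. The $R^M$-linearity of $T_q$ is deduced \emph{a posteriori} from uniqueness, not built in. For $\symm_n$ over $\Z$, the paper bypasses Soergel theory entirely: it verifies the atomic Leibniz rule explicitly on the generators $h_i(x_1,\dots,x_a)$ of $R^J$ over $R^M$ (Theorem~\ref{thm:demazureoncompletedelta}), where a fortunate vanishing makes only the single lower term $q_1$ survive, and then uses the equivalence with polynomial forcing to get multiplicativity in $f$ and hence the rule for all of $R^J$. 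Your proposal mentions this route only as an aside; in the paper it is the actual proof.
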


 The motivation for this result will take some setup, so please bear with us. 
	
\begin{ex}\label{ex:p=s}  We let $e$ denote the identity element. Let $s$ be a simple reflection and let $\at$ be the $(\mt,\mt)$-coset $\{s\}$. There is only one coset less than $\at$, namely $q = \{e\}$.  We set $T^\at_q(f) = \pa_s(f)$. Note that $\pa_\at = \pa_s$ and $\pa_q = \id$. Then \eqref{atomicprototype} becomes
\begin{equation} \pa_s(fg) = s(f) \pa_s(g) + \pa_s(f) g, \end{equation}
which recovers the twisted Leibniz rule. \end{ex}
  
For more examples see \S\ref{subsec:examplesintro}. There we also give an example of a non-atomic coset $p$ for which no equality of the form \eqref{atomicprototype} can hold (with $p$ replacing $\at$).
	
\begin{rem} Though we can prove the existence and uniqueness of such a formula, we do not have an explicit description of the operators $T_q$. We consider this an interesting open problem.  A more accessible problem is to compute $T_q$ on a (carefully chosen) set of generators of $R^J$, which we accomplish in type $A$ in Theorem \ref{thm:demazureoncompletedelta}. This is useful computationally because it gives enough information
to apply the atomic Leibniz rule for any pair of elements in $R^J$ (see the discussion of multiplicativity in \S\ref{ssec:lastpartofintro}).  \end{rem}
	%In type $A$ we can compute $T_q(f)$ for certain generators of $R^J$, see Theorem \ref{thm:demazureoncompletedelta}. \BE{reworded}This makes it possible to apply the atomic Leibniz rule even without knowing the operators $T_q$ in general, see the discussion at the end of the introduction. \end{rem}

\begin{rem} \label{rem:alrawesome}  In \S\ref{ssec:changerealization} we prove that the atomic Leibniz rule for one realization implies the atomic Leibniz rule for related realizations (e.g. obtained via base change, enlargement, or quotient). The atomic Leibniz rule also depends only on the restriction of the realization to the finite parabolic subgroup $W_M$ associated to $\at$. Thus Theorem \ref{thm:alr} implies the atomic Leibniz rule in broad generality for realizations of Coxeter systems in both finite and affine type $A$, and in finite characteristic as well, see Example \ref{ex:atomicforaffine}.
\end{rem}

\subsection{}

Our second main result is a connection between the atomic Leibniz rule and the theory of singular Soergel bimodules  \cite{SingSb}.  Like Soergel bimodules, singular Soergel bimodules are ubiquitous in geometric representation theory, appearing e.g. in the geometric Satake equivalence and other situations where partial flag varieties play a role.
Specifically, we connect the atomic Leibniz rule to a property called polynomial forcing, whose motivation we postpone a little longer.

Let us first be more precise. Singular Soergel bimodules are graded bimodules over graded rings, but we ignore all gradings in this paper.
To an atomic coset $\at$ as above, one can associate the $(R^I, R^J)$-bimodule  $B_\at := R^I \ot_{R^M} R^J$. This is an indecomposable bimodule; there are also indecomposable $(R^I, R^J)$-bimodules $B_q$ associated to any $(I,J)$-coset $q$, which we will not try to describe here. Then $B_\at$ has a submodule called the \emph{submodule of lower terms}, spanned by the images of all bimodule endomorphisms of $B_\at$ which factor through $B_q$ for $q < \at$. 

\emph{Atomic polynomial forcing} is the statement that $1 \ot f$ and $\mi{\at}(f) \ot 1$ are equal modulo lower terms in $B_\at$, for any $f \in R^J$. Williamson has proven for SW-realizations \cite[Theorem 6.4]{SingSb} that  singular Soergel bimodules have ``standard filtrations,'' which implies atomic polynomial forcing for SW-realizations.  \emph{Polynomial forcing} (Definition \ref{def:generalatomic}) is a generalization of atomic polynomial forcing for a bimodule $B_p$ associated with an arbitrary double coset $p$. %, which is somewhat technical so we omit it in the introduction \HK{we said this already when $B_q$ appears above}.  
Using \cite{KELP2, KELP3} we prove Theorem \ref{thm:polyforcinggeneral}, which says that polynomial forcing for any double coset is a consequence of atomic polynomial forcing.
%In the language of singular Soergel bimodules, this implies that the quotient of $B_p$ by the submodule of lower terms is isomorphic to a ``standard bimodule.''

The restrictions imposed on SW-realizations are significant as they rule out some examples of great importance to modular representation theory, e.g. affine Weyl groups in finite characteristic. An \emph{almost SW-realization} (Definition \ref{defn:almostSW}) is a realization over a domain $\Bbbk$ such that one obtains an SW-realization after base change to the fraction field of $\Bbbk$. An example is $\Z[x_1, \ldots, x_n]$ with its action of $\symm_n$. The second main result of this paper is the following equivalence (\Cref{thm.AL=PF}):
\begin{lettertheorem} \label{thm:al=pf}
Let $(W,S)$ be a Coxeter system equipped with an almost SW-realization. Then the atomic Leibniz rule is equivalent to atomic polynomial forcing. 
\end{lettertheorem}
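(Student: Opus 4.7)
The plan is to realize both AL and atomic PF as statements about the single $(R^I, R^J)$-bimodule $B_\at = R^I \otimes_{R^M} R^J$, using two natural maps to $R^I$. The \emph{standard quotient} $\sigma_\at \co B_\at \twoheadrightarrow \Delta_\at$ is given by $a \otimes b \mapsto a\,\mi{\at}(b)$ (well-defined because $\mi{\at}\co R^J \to R^I$ is a ring map), and the \emph{trace} $\mu_\at \co B_\at \to R^I$ is given by $a \otimes b \mapsto a\,\pa_\at(b)$ (well-defined because $\pa_\at$ is $R^M$-linear, as each simple Demazure operator in $\pa_{w_M}$ is $R^M$-linear). For fixed $g \in R^J$, the composite $\psi_g(x) := \mu_\at(x \cdot g)$ sends $y_f := 1 \otimes f - \mi{\at}(f) \otimes 1$ to $\pa_\at(fg) - \mi{\at}(f)\pa_\at(g)$, which is the left-hand side of AL. The lower-terms submodule $L$ always satisfies $L \subseteq \ker(\sigma_\at)$, because any endomorphism of $B_\at$ factoring through $B_q$ gives, after composition with $\sigma_\at$, an element of $\Hom(B_q, \Delta_\at)$, which vanishes for $q < \at$ (a standard feature of the standard module). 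Since $\ker(\sigma_\at)$ is generated as a left $R^I$-module by the $y_f$, atomic PF is equivalent to $y_f \in L$ for every $f \in R^J$.

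For PF $\Rightarrow$ AL, decompose $y_f \in L$ as a sum of images $\alpha_i \beta_i(x_i)$ with $\beta_i \co B_\at \to B_{q_i}$, $\alpha_i \co B_{q_i} \to B_\at$, $q_i < \at$, and apply $\psi_g$. Each $\psi_g \circ \alpha_i \co B_{q_i} \to R^I$, being a map to $R^I$ out of $B_{q_i}$, takes the form $y \mapsto \pa_{q_i}(\tau_i(y) \cdot g)$ for an $R^{I\cup J}$-linear operator $\tau_i$, by the Frobenius-extension framework for Demazure operators on double cosets developed in \cite{KELP2}. Setting $T^\at_q(f) := \sum_{q_i = q} \tau_i(\beta_i(x_i))$ produces the AL identity, and the required invariance of $T^\at_q(f)$ follows from the same Frobenius-theoretic characterization. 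Uniqueness of the $T^\at_q$ then follows from the nondegeneracy of the pairing $(h, g) \mapsto \pa_q(hg)$ associated to the Frobenius extension $R^{I\cup J} \subseteq R^J$: if $\sum_q \pa_q(T_q(f)\, g) = 0$ for every $g$, then each $T_q(f) = 0$.

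For the converse AL $\Rightarrow$ PF, invoke the almost-SW hypothesis. Base change along $\Bbbk \hookrightarrow K$ (the fraction field of $\Bbbk$) gives a genuine SW-realization, so atomic PF holds over $K$ by Williamson's standard-filtration theorem \cite[Thm.~6.4]{SingSb}, and hence by the direction just established AL holds over $K$ with unique operators $T_q^K$. By uniqueness, $T_q^K$ is the base change of the $T^\at_q$ provided by AL over $\Bbbk$. Now invert the PF $\Rightarrow$ AL construction: using the $T^\at_q$ together with a natural $R^{I\cup J}$-basis of the relevant Hom spaces $\Hom(B_\at, B_q)$ and $\Hom(B_q, B_\at)$ (taken from Williamson's standard bases over $K$, then shown to be defined already over $\Bbbk$), build a candidate witness $\tilde y_f \in L$ satisfying $\psi_g(\tilde y_f) = \sum_q \pa_q(T^\at_q(f)\, g) = \psi_g(y_f)$ for every $g$. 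Then $y_f - \tilde y_f$ lies in $\bigcap_g \ker(\psi_g)$, and the nondegeneracy of the $\pa_\at$-pairing combined with freeness of $B_\at$ over $R^I$ forces $y_f = \tilde y_f \in L$. The main obstacle is precisely this inverted construction: encoding the $T^\at_q$ by explicit bimodule morphisms over $\Bbbk$, and verifying that integrality of the $T^\at_q$ (guaranteed by AL over $\Bbbk$) transfers to integrality of the bimodule-level coefficients, requires careful tracking of $\Bbbk$-bases for Hom spaces coming from the standard-filtration picture over $K$.
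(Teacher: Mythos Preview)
Your high-level architecture is sound: identify $\End(B_\at)$ with $B_\at$, set $y_f = 1\otimes f - \mi{\at}(f)\otimes 1$, and try to show that $y_f$ lies in the lower-terms submodule $L := \End_{<\at}(B_\at)$ if and only if the atomic Leibniz identity holds for $f$. This is also the paper's skeleton. However, the proposal has a genuine gap at the point where you need an \emph{explicit} description of $L$ as a subset of $B_\at$, and you cannot bypass this by working abstractly with factorizations.

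In your PF $\Rightarrow$ AL direction, the assertion that any composite $\psi_g \circ \alpha_i \colon B_{q_i} \to R^I$ ``takes the form $y \mapsto \pa_{q_i}(\tau_i(y)\cdot g)$'' is not justified; $\psi_g$ is only left $R^I$-linear, not a bimodule map, so there is no general Frobenius-theoretic reason an arbitrary left $R^I$-linear functional on $B_{q_i}$ should have this particular shape. The vague citation to the ``Frobenius-extension framework'' does not supply this. Relatedly, your uniqueness argument fails: nondegeneracy of the pairing $(h,g)\mapsto \pa_q(hg)$ for a \emph{single} $q$ says nothing about separating a sum $\sum_{q<\at}\pa_q(T_q(f)\cdot g)$ into its $q$-components. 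In your AL $\Rightarrow$ PF direction, you correctly identify that the ``main obstacle'' is building an explicit witness $\tilde y_f \in L$ from the data $T_q(f)$, but you do not actually carry this out; appealing to ``Williamson's standard bases over $K$, then shown to be defined already over $\Bbbk$'' is precisely the nontrivial content that is missing.

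The paper resolves all of this by inserting an intermediate object: the subspace $\PF_{<\at} \subset B_\at$, defined as the span of explicitly evaluated double-leaf morphisms $\DLL_r(q,b) = \pa_{\ma{q}w_J^{-1}}(b\cdot\Delta^J_{M,(1)})\otimes\Delta^J_{M,(2)}$. A short coproduct computation (using $1\otimes f = \pa_\at(f\Delta^J_{M,(1)})\otimes\Delta^J_{M,(2)}$) shows directly that AL for $f$ is equivalent to $y_f \in \PF_{<\at}$; this needs no structure theory of $L$ and works over any Frobenius realization. The hard step, proved separately, is $\PF_{<\at} = L$ for almost-SW realizations: over the fraction field this is the double-leaves basis theorem, and descent to $\Bbbk$ uses a divisibility lemma for Demazure operators (if $n \mid \pa_{w_Mw_J^{-1}}(bg)$ for all $g\in R^J$ then $n\mid b$) to show $\PF_{<\at}$ is saturated in $B_\at$. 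This separation --- an elementwise equivalence AL $\Leftrightarrow$ $\PF$-forcing, plus an equality of submodules $\PF_{<\at}=L$ --- is exactly what your proposal is missing, and it also explains why uniqueness holds: the double leaves for distinct $q$ are linearly independent.
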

%\HK{In thm B and C it should be an atomic Leibniz, right?} \BE{This subtlety isn't yet discussed. I think it is better to leave it as is. We state things precisely later.}\NL{I added the word ``atomic'' before DL in theorem C. }\HK{I see. I agree.}
We deduce the SW-realization case of Theorem \ref{thm:alr} from Theorem \ref{thm:al=pf} and Williamson's theory of standard filtrations.

\subsection{}  Let us now explain the motivation for our results.

The diagrammatic presentation for the category of Soergel bimodules, developed by Elias, Khovanov, Libedinsky, and Williamson \cite{Libpresentation},\cite{KhovanovElias}, \cite{Bendihedral}, \cite{Soergelcalculus} has proven to be an important tool for both abstract and computational reasons. This diagrammatically constructed category is typically called the \emph{Hecke category}, and is equivalent to the category of Soergel bimodules for SW-realizations. For non-SW-realizations the category of Soergel bimodules need not behave well, but the diagrammatic category does behave well, and continues to provide a categorification of the Hecke algebra. The ability to compute within the Hecke category has led to advances such as \cite{EWhodge}, \cite{Relativehl},\cite{Geordiesc},\cite{GeordieSimon},\cite{LusWil},\cite{AMRW}. 

The authors are in the midst of a concerted effort to define the \emph{singular Hecke category}, a diagrammatic presentation of singular Soergel bimodules. The framework for such a presentation was developed in \cite{ESW}, but this framework lacks some of the relations needed.  In \cite{KELP4} we described what should be the basis for morphisms in the diagrammatic category, called the \emph{double leaves basis}, and proved that it descends to a basis for morphisms between singular Soergel bimodules for SW-realizations.

Once the remaining relations are understood, proving that the diagrammatic category is correctly presented reduces to proving that double leaves span. An important part of that proof will be \emph{diagrammatic polynomial forcing}, which is like polynomial forcing except that the submodule of lower terms is replaced by the span of double leaves associated to smaller elements in the Bruhat order. We abbreviate diagrammatic polynomial forcing to \emph{$\PF$-forcing}. It is $\PF$-forcing which is our true goal in this paper.

For an atomic coset $\at$ one can identify $\End(B_{\at})$ with $B_{\at}$ as bimodules. In Section \ref{subsect:eval}, we explicitly compute the submodule $\PF_{<\at} \subset B_\at$ corresponding to the $\Bbbk$-linear
subspace of $\mathrm{End}(B_\at)$ spanned by double leaves factoring through
$q < \at$. The following theorem (Corollary \ref{cor.ST=LT} and Theorem \ref{thm.AL=PF})
is proven by direct computation and is the reason we discovered the atomic Leibniz rule in the first place. 

\begin{lettertheorem} \label{thm:al=DLpf}
Let $(W,S)$ be a Coxeter system equipped with an almost SW-realization. Then the atomic Leibniz rule is equivalent to atomic $\PF$-forcing. 
\end{lettertheorem}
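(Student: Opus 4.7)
The plan is to deduce Theorem~C by combining Theorem~B with an explicit identification of two candidate ``submodules of lower terms'' inside $B_\at$. Under the standard identification $\End(B_\at) \cong B_\at$, atomic polynomial forcing concerns a submodule $LT \subseteq B_\at$ defined categorically as images of endomorphisms of $B_\at$ factoring through $B_q$ for some $q < \at$, while atomic $\PF$-forcing uses a submodule $\PF_{<\at} \subseteq B_\at$ defined combinatorially as the span of double leaves factoring through $q < \at$. If one can show $LT = \PF_{<\at}$, then the two polynomial forcing statements are literally identical as assertions about $B_\at$, and Theorem~C reduces immediately to Theorem~B.

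The first step is to compute $\PF_{<\at}$ explicitly. Using the description of the double leaves basis from \cite{KELP4} together with the evaluation isomorphism $\End(B_\at) \cong B_\at$, each double leaf through $q < \at$ contributes an element of $B_\at$ built out of $\pa_q$ applied to various polynomial multipliers, together with the $(R^I, R^J)$-bimodule action. This is the computation announced in Section~\ref{subsect:eval}, and its output is an explicit spanning set for $\PF_{<\at}$ in terms of the operators $\pa_q$ and the (still unknown) polynomial data that will be named by $T_q^\at$.

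The second step, which I expect to be the main obstacle, is Corollary~\ref{cor.ST=LT}: the identification $\PF_{<\at} = LT$. The inclusion $\PF_{<\at} \subseteq LT$ is essentially tautological, since a double leaf through $q < \at$ is by construction an endomorphism factoring through $B_q$. The reverse inclusion $LT \subseteq \PF_{<\at}$ is the substantive content; it should follow from the double leaves spanning result of \cite{KELP4} (which holds for SW-realizations and transfers to almost SW-realizations by a base change argument using the domain hypothesis on $\Bbbk$), together with the fact that the filtration of $\End(B_\at)$ by ``factoring through $B_q$ for some $q < \at$'' is compatible with the partition of the double leaves basis by intermediate coset. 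With $LT = \PF_{<\at}$ established, atomic polynomial forcing and atomic $\PF$-forcing are the same assertion about the element $1 \ot f - \mi{\at}(f) \ot 1 \in B_\at$ for $f \in R^J$, so Theorem~C follows immediately from Theorem~B.
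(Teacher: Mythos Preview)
Your proposal inverts the logical dependence between Theorems~B and~C. In the paper, Theorem~C is proven \emph{first}, by a direct computation (Proposition~\ref{prop:AL=PF}), and then Theorem~B is deduced from Theorem~C together with Corollary~\ref{cor.ST=LT}. The introduction is explicit about this: ``Thus $\PF$-forcing agrees with polynomial forcing for almost-SW-realizations, thus Theorem~\ref{thm:al=DLpf} implies Theorem~\ref{thm:al=pf}.'' So your plan to deduce Theorem~C from Theorem~B is circular: the only proof of Theorem~B on offer already passes through Theorem~C.

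What you are missing is the direct argument of Proposition~\ref{prop:AL=PF}. Using the Frobenius coproduct $\Delta^J_M$ and the identity $1\ot f = \pa_\at(f\cdot \Delta^J_{M,(1)})\ot \Delta^J_{M,(2)}$, one computes
\[
1\ot f - \mi{\at}(f)\ot 1 \;=\; \bigl[\pa_\at(f\cdot \Delta^J_{M,(1)}) - \mi{\at}(f)\,\pa_\at(\Delta^J_{M,(1)})\bigr]\ot \Delta^J_{M,(2)}.
\]
Comparing this with the explicit description of $\PF_{<\at}$ from Lemma~\ref{lem.circleev} (which you do cite), one sees immediately that this element lies in $\PF_{<\at}$ if and only if the bracketed quantity can be written as $\sum_{q<\at}\pa_{\ma{q}w_J^{-1}}(T_q(f)\cdot \Delta^J_{M,(1)})$, which is precisely the atomic Leibniz rule for $f$ applied to $g=\Delta^J_{M,(1)}$; a basis argument then extends this to all $g$. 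This computation is where $\PF$-forcing and the atomic Leibniz rule are shown to be equivalent, and it makes no reference to $\End_{<\at}(B_\at)$ at all. Corollary~\ref{cor.ST=LT} is then used in the opposite direction, to transport this equivalence over to polynomial forcing and obtain Theorem~B. Your two steps (the explicit computation of $\PF_{<\at}$ and the equality $\PF_{<\at}=\End_{<\at}(B_\at)$) are genuine ingredients, but they serve to prove Theorem~B from Theorem~C, not the other way around.
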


In other words, for a given $f \in R^J$, we prove that $1 \ot f - \mi{\at}(f) \ot 1 \in \PF_{<\at}$  if and only if the atomic Leibniz rule holds for $f$ and any $g \in R^J$.

For an SW-realization, the results of \cite{KELP4} can be used to prove that $\PF_{<\at}$ agrees precisely with the submodule of lower terms (see Corollary \ref{cor:pfisltoverSW}). In \S\ref{ssec:localizationtricks} we use some novel localization tricks to extend this statement to almost-SW-realizations. Thus $\PF$-forcing agrees with polynomial forcing for almost-SW-realizations, thus Theorem \ref{thm:al=DLpf} implies Theorem \ref{thm:al=pf}. 

\subsection{}\label{ssec:lastpartofintro}

One is still motivated to prove $\PF$-forcing (or equivalently, the atomic Leibniz rule) for almost-SW-realizations and more general realizations. We now discuss how this process is simplified by Theorem \ref{thm:al=pf}.

Let us say the atomic Leibniz rule \emph{holds for $f \in R^J$} if \eqref{atomicprototype} holds for that specific $f$ and all $g \in R^J$. It is obvious that if the rule holds for $f_1$ and $f_2$ then it holds for $f_1 + f_2$. It is not obvious that if it holds for $f_1$ and $f_2$ then it holds for $f_1 \cdot f_2$, thus the formula \eqref{atomicprototype} is not obviously \emph{multiplicative} in $f$. However, atomic polynomial forcing is obviously multiplicative in $f$. Our equivalence in Theorem \ref{thm:al=pf} is proven on an element-by-element basis, from which we conclude that the atomic Leibniz rule is actually multiplicative.

% \begin{rem} \BE{do we want this remark? maybe better as a footnote? It does make this issue ring as significant when it first creeps into appearance.}\NL{Too technical, I would erase it.}\HK{agree} Even diagrammatic polynomial forcing is not obviously multiplicative. The subtlety here is that lower terms are an $(R^I, R^J)$-submodule of $B_{\at}$, while $\PF_{<\at}$ is only a priori a right $R^J$-submodule. \end{rem}

As a consequence, the atomic Leibniz rule can be proven without relying on Williamson's results, by checking it on each generator $f$ of $R^J$. We perform this computation in type $A$ in \S\ref{s.A}, for carefully chosen generators $f$, via a direct and elementary proof. The operators $T_q$ are simplified dramatically when applied to these generators.

On the one hand, this proves Theorem \ref{thm:alr} for the symmetric group over $\mathbb{Z}[x_1,\ldots, x_n]$, and consequently for other realizations (see Remark \ref{rem:alrawesome}). Via Theorem \ref{thm:al=DLpf}, this implies $\PF$-forcing in the same generality. On the other hand, it also gives a computationally effective way to apply the atomic Leibniz rule (or polynomial forcing), even if one does not know the operators $T_q$ in general: decompose $f$ as a linear combination of products of generators, and apply the atomic Leibniz rule for generators one term at a time.

% \begin{rem}\label{rem-circuit} The proof in type $A$ of the atomic Leibniz rule for specially chosen generators of $R^J$ is direct and elementary. To turn this into a proof of the atomic Leibniz rule still requires proving that this rule is multiplicative. Currently, we only know how to do this circuitously, via the equivalence to polynomial forcing. \end{rem}

% \begin{rem}\label{rem:alrawesome} Singular Soergel bimodules might not behave nicely for more general realizations, e.g. non-faithful realizations or realizations in characteristic $2$. However, the atomic Leibniz rule will continue to hold after specialization! Proving the atomic Leibniz rule over $\Z$ will imply the result in broad generality, which in the future can hopefully be applied to prove polynomial forcing for the diagrammatic category. \end{rem}

{\bf Acknowledgments.}  BE was partially supported by NSF grants DMS-2201387 and DMS-2039316. HK was partially supported by the Swedish Research Council. NL was partially supported by FONDECYT-ANID grant 1230247.

% By specialization from a realization where Williamson's assumptions hold to a more general realization, we can deduce atomic Leibniz and therefore polynomial forcing in other realizations. \HK{I do not understand the latter. How do you specialize from a realization to another realization?} \BE{there's a homomorphism from $R_1$ to $R_2$; you can base change over it. It's a standard technique, I think we discuss it in the book.}\HK{You mean, given any realization $R_1$, one can find a realization $R_2$ together with a map to (from?) $R_1$ such that $R_2$ satisfies all conditions neccessary for Williamson's results? Could you write details what $R_2$ is? I think this is important enough a statement to be a numbered Corollary with proof in the body of the paper, and we can refer to it in the intro. I could not find any comparison of realizations in Soergelbook.} \BE{Ah, I did not mean to imply that ANY realization was a speicalization of one where Williamson's works, only that this is a way to generalize the result.}

% Although we defined $R = \Z[x_1, \ldots, x_n]$ above, whenever we discuss singular Soergel bimodules we will change base and set $R = \Bbbk[x_1, \ldots, x_n]$ for a field $\Bbbk$, to ensure that we are correctly\footnote{Some references require that $\Bbbk$ is an infinite field of characteristic not equal to $2$. This is not actually an essential requirement, and such technicalities will not be important in this paper.} citing the literature.

\section{Examples and remarks}\label{subsec:examplesintro}

In this chapter we give some examples of atomic Leibniz rules, in the relatively small Coxeter group $W = \symm_4$. The reader should not be disheartened if these examples are still quite difficult and technical to verify by hand! Indeed, one purpose of this chapter is to showcase the complexity involved in even the simplest examples of the atomic Leibniz rule.

The reader will not miss out on anything important if they skip directly to \Cref{s.AL}. %\LP{Except maybe \Cref{lem:iteratedleibnizforw}?} \BE{it's a well-known formula so not worth mentioning I think.}

\subsection{Examples}

Our examples take place in $W = \symm_4$. We let $s = (12)$, $t = (23)$, and $u = (34)$ denote the simple reflections. We give the examples without justification first, and then discuss the verification afterwards. We require that $T_q(f) \in R^K$ for some $K \subset S$ depending on $q$ (precisely: $K = \mi{q}^{-1} I \mi{q} \cap J$).

\begin{ex} \label{ex:S2S2} There are three $(su,su)$-cosets in $\symm_4$: the maximal one $p$ which is atomic, the submaximal coset $q$ with $\mi{q} = t$, and the minimal coset $r$ containing the identity. We claim that for $f, g \in R^{su}$ we have 
	\begin{equation} \label{tsuteq} \pa_{tsut}(fg) = tsut(f) \cdot \pa_{tsut}(g) + \pa_{sut}(T_q(f) \cdot g) + T_r(f) \cdot g, \end{equation}
	or in other words
	\begin{equation} \pa_p(fg) = \mi{p}(f) \cdot \pa_p(g) + \pa_q(T_q(f) \cdot g) + \pa_r(T_r(f) \cdot g), \end{equation}
	where
	\begin{equation} T_q(f) = su\pa_t(f), \qquad T_r(f) = \pa_{tsut}(f) - \pa_{sut}(T_q(f)). \end{equation}
	Note that (obviously) $T_q(f) \in R = R^{\mt}$ while (less obviously) $T_r(f) \in R^{su}$. These are the invariance requirements.
	
	% Note also that $\rightred(q) = \mt$ and (obviously) $T_1(f) \in R^{\rightred(q)}$, while $\rightred(r) = \{s,u\}$ and (less obviously) $T_2(f) \in R^{\rightred(r)}$.
	% \NL{Shouldn't $T_1$ and $T_2$ be called $T_q$ and $T_r$?}
	
\end{ex}

\begin{rem} Iterating the ordinary twisted Leibniz rule, it is not hard to deduce the existence of a formula of the form
	\[ \pa_{tsut}(fg) = tsut(f) \cdot \pa_{tsut}(g) + \sum_{x < tsut} \pa_x(T_x(f) g) \]
	for some operators $T_x \co R \to R$. What is not obvious is that, when $f, g \in R^{su}$, this formula will simplify so that only the terms where $x = sut$ and $x=1$ survive. \end{rem}

\begin{ex} \label{ex:S3S1} There are two $(tu,st)$-cosets in $\symm_4$: the maximal coset $p$ which is atomic, and the minimal coset $q$ containing the identity element. We claim that for $f, g \in R^{st}$ we have
	\begin{equation}\label{stueq} \pa_{stu}(fg) = stu(f) \pa_{stu}(g) + \pa_{tu}(T_q(f) \cdot g), \end{equation}
	or in other words
	\begin{equation} \pa_p(fg) = \mi{p}(f) \pa_p(g) + \pa_q(T_q(f) \cdot g), \end{equation}
	where
	\begin{equation} T_q(f) = st\pa_u(f). \end{equation}
	Note also that $T_q(f) \in R^{t}$.
\end{ex}

It helps to look at an example of a non-atomic coset, to see that Leibniz rules are not guaranteed.

\begin{ex} \label{ex:leibnizforq} We return to the notation of Example \ref{ex:S2S2}. Note that $r$ is the only coset less than $q$, and $\pa_r$ is the identity map. Let us argue that there is no naive analogue of \eqref{atomicprototype} for $\pa_q$. If there were, it would have to have the form
	\begin{equation} \label{wontwork} \pa_{sut}(f \cdot g) = sut(f) \pa_{sut}(g) + T(f) \cdot g \end{equation}
	for some operator $T$. By evaluating at $g=1$ we have $T(f) = \pa_{sut}(f)$.
	
	Note that $f, g \in R^{su}$. Iterating the twisted Leibniz rule we have
	\begin{align} \label{thetruth} & \quad \pa_{sut}(f \cdot g) = \\ & sut(f) \pa_{sut}(g) + \pa_s(ut(f)) \pa_{ut}(g) + \pa_u(st(f)) \pa_{st}(g) + \pa_{su}(t(f)) \pa_t(g) + \pa_{sut}(f) g. \nonumber \end{align}
	Only two of these five terms are in \eqref{wontwork}. If $g$ is linear and $\pa_t(g) = 1$ then the sum of the three missing terms is nonzero for some $f$. Thus \eqref{wontwork} is false.
	
	However, now suppose that $f \in t(R^{su})$ instead of $R^{su}$. Then many terms in \eqref{thetruth} vanish, yielding
	\begin{equation} \label{willwork} \pa_{sut}(f \cdot g) = sut(f) \pa_{sut}(g) + \pa_{sut}(f) g. \end{equation}
	This is compatible with \eqref{wontwork}, with $T = \pa_{sut}$.
	
	In view of this example, there is hope of finding some generalization of the atomic Leibniz rule to some non-atomic $(I,J)$ cosets $q$, letting $f \in \mi{q}^{-1}(R^I)$. Be warned that the coset $q$ considered in this example has various special properties, see Remark \ref{rmk:potentialgeneralization}.
\end{ex}

% \BE{nico, you had written:} \NL{but we do not know the exact statement.} \BE{It may help to read the commented out paragraph below... are you ok with what I wrote above?}

% More generally, if $p$ is not core but $p^{\core}$ is atomic, and every coset less than $p^{\core}$ is also a core coset, we conjecture \BE{we could prove it...} that \eqref{eq:atomicleibniz} holds for $p$ under the restriction that $f \in \mi{p}^{-1}(R^I)$.

The examples above can easily be verified by computer (all the equations are $R^{stu}$-linear so one need only check the result for $f, g$ in a basis). They can also all be verified using \eqref{demazuredefn} and \eqref{eq:twistedLeibniz}, but this is a very tricky exercise. Doing this exercise may be very instructive for the reader, and emphasizes the difficult and subtlety in these formulae, so we encourage it, and provide some helping hands.

We begin with a few helpful general properties of Demazure operators, which hold in general when $m_{st} = 3$ and $m_{su} = 2$.

\begin{lem} We have
	\begin{equation} \label{quadsign} \pa_s(s(f)) = - \pa_s(f), \quad s \pa_s(f) = \pa_s(f), \qquad \pa_s(\pa_s(f)) = 0,\end{equation}
	\begin{equation} \label{eq:braiddem} st\pa_s(f) = \pa_t(stf), \quad s \pa_t(sf) = t \pa_s(tf), \quad s \pa_u(f) = \pa_u(sf), \end{equation}
	\begin{equation} \alpha_s \pa_s(f) = f - sf, \end{equation}
	\begin{equation} \label{tricky} \pa_{st}(sf) + \pa_{ts}(f) = t \pa_{st}(f). \end{equation}
\end{lem}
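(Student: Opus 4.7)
The plan is to derive every identity directly from the definition $\pa_s(f) = (f - sf)/\alpha_s$, together with two standard inputs about the realization: $s(\alpha_s) = -\alpha_s$ for any simple reflection $s$, and the usual dihedral action of $W_{\{s,t\}}$ on $\alpha_s, \alpha_t$ when $m_{st} = 3$; in particular $t(\alpha_s) = \alpha_s + \alpha_t$ and $s(\alpha_t) = \alpha_s + \alpha_t$.

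First I would dispatch the identities in \eqref{quadsign} together with $\alpha_s \pa_s(f) = f - sf$; these are immediate from the definition and the relations $s^2 = 1$, $s(\alpha_s) = -\alpha_s$. For example, $s\pa_s(f) = (sf - f)/s(\alpha_s) = (sf - f)/(-\alpha_s) = \pa_s(f)$, after which $\pa_s(\pa_s(f))$ vanishes because the identity $s\pa_s(f) = \pa_s(f)$ makes the defining numerator zero.

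Next, I would handle the three identities in \eqref{eq:braiddem} by unwinding both sides. The commutation $s\pa_u(f) = \pa_u(sf)$ is immediate because $su = us$ and $s(\alpha_u) = \alpha_u$. For the two braid-type identities $st\pa_s(f) = \pa_t(stf)$ and $s\pa_t(sf) = t\pa_s(tf)$, I would compute both sides from the definition, using the braid relation $sts = tst$ in the numerators; the only nontrivial inputs are the root equalities $st(\alpha_s) = \alpha_t$ and $s(\alpha_t) = t(\alpha_s) = \alpha_s + \alpha_t$, which hold in any realization satisfying the dihedral relation with $m_{st} = 3$.

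The main obstacle is \eqref{tricky}, $\pa_{st}(sf) + \pa_{ts}(f) = t\pa_{st}(f)$. My strategy is to first rewrite $sf = f - \alpha_s\pa_s(f)$ and apply $\pa_t$ via the twisted Leibniz rule \eqref{eq:twistedLeibniz}, yielding
\[ \pa_t(sf) = \pa_t(f) - (\alpha_s + \alpha_t)\,\pa_{ts}(f) + \pa_s(f), \]
using $t(\alpha_s) = \alpha_s + \alpha_t$ and $\pa_t(\alpha_s) = -1$. Applying $\pa_s$ and again using twisted Leibniz, together with $\pa_s\pa_s = 0$, $\pa_s(\alpha_s) = 2$, $\pa_s(\alpha_t) = -1$, and $s(\alpha_s + \alpha_t) = \alpha_t$, the identity to be proven collapses to
\[ \pa_{st}(f) - t\pa_{st}(f) = \alpha_t\,\pa_s\pa_t\pa_s(f). \]
But by the definition of $\pa_t$ the left-hand side equals $\alpha_t\,\pa_t\pa_s\pa_t(f)$, so the braid relation $\pa_t\pa_s\pa_t = \pa_s\pa_t\pa_s$ finishes the argument. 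The only genuine difficulty is the bookkeeping of signs and coefficients that appear when iterating the twisted Leibniz rule.
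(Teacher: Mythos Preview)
Your proposal is correct and follows exactly the approach the paper intends: the paper's proof simply says ``The reader can verify these relations directly from \eqref{demazuredefn},'' and you have carried out precisely that verification, with the only nontrivial case \eqref{tricky} handled cleanly via the twisted Leibniz rule and the braid relation for Demazure operators.
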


Note that $\alpha_{s_i} = x_i - x_{i+1}$ above.

\begin{proof} The reader can verify these relations directly from \eqref{demazuredefn}.  \end{proof}

Most of these formulae are well-known. Meanwhile, we have not seen \eqref{tricky} before; we only use it in Example \ref{ex:twistedleibnizsts} below. With these relations in hand, one need not refer to the original definition \eqref{demazuredefn} again, and need only use the twisted Leibniz rule.

Here are some example computations using \eqref{quadsign} and \eqref{eq:braiddem}.
\begin{equation} \pa_s(t \pa_s(f)) = - \pa_s(st\pa_s(f)) = - \pa_s \pa_t(stf) = \pa_s \pa_t(tstf). \end{equation}
\begin{equation} \pa_s(t \pa_s(f)) = \pa_s(ts \pa_s(f)) = ts\pa_t\pa_s(f). \end{equation}

Let us consider example \ref{ex:S3S1}. It helps to observe that
\begin{equation} \pa_{st}(u(f)) \in R^{st}, \qquad \pa_{stu}(f) \in R^{stu}, \end{equation}
under the assumption that $f \in R^{st}$. One way to see the equality \eqref{stueq} is to expand both sides using the twisted Leibniz rule. Since $g \in R^{st}$, any terms containing $\pa_{st}(g)$ or $\pa_s(g)$ or $\pa_t(g)$ or $\pa_{su}(g)$ are zero. Now compare the coefficients of $g$, $\pa_u(g)$ and $\pa_{tu}(g)$ respectively. On the left the coefficient of $g$ is $\pa_{stu}(f)$, while on the right the coefficient of $g$ is  \[\pa_{tu}st \pa_u(f)=\pa_ts\pa_u tu\pa_u(f)=\pa_tstu\pa_{tu}(f)=st\pa_s u\pa_{tu}(f)=stu\pa_{stu}(f)=\pa_{stu}(f).\]
We have applied \eqref{eq:braiddem} repeatedly and used that $\pa_{stu}(f) \in R^{stu}$. Thus the coefficients of $g$ are the same on both sides of \eqref{stueq}. We leave the other coefficients to the reader.

Example \ref{ex:S2S2} is the hardest. One should first prove the following statements under the critical assumption that $f, g \in R^{su}$.
\begin{subequations}
	\begin{equation} \pa_{tsut}(f) \in R^{stu}, \qquad  \pa_{ts}(ut(f)) \in R^{st},\end{equation}
	\begin{equation} \pa_{su}(tsu(\pa_t(f))) = tsut(\pa_{sut}(f)), \end{equation}
	\begin{equation} t(\pa_{sut}(f)) = tsu(\pa_{sut}(f)),
	\end{equation}
	\begin{equation} tsu\left[\pa_{sut}(f) - t \pa_{sut}(f)\right] = tsu(\alpha_t \pa_{tust}(f)) = (\alpha_s + \alpha_t + \alpha_u) \pa_{tsut}(f), \end{equation}
	\begin{equation} \pa_{tsu}(tf) = \pa_{tsu}(-\alpha_t \pa_t(f)) = -(\alpha_s + \alpha_t + \alpha_u) \pa_{tsut}(f). \end{equation}
\end{subequations}
We leave these verifications, and the deduction of \eqref{tsuteq} therefrom, to the ambitious reader.

\subsection{Leibniz rules for permutations} \label{ssec:leibnizpermutations}

A natural question is raised: for which double cosets $p$ does one expect an equality of the form \eqref{atomicprototype} to hold? Given the discussion in Example \ref{ex:leibnizforq}, one might ask instead: for which double cosets $q$ does \eqref{atomicprototype} hold under the alternate assumption that $f \in \mi{q}^{-1}(R^I)$? Note that $f \in \mi{q}^{-1}(R^I)$ is equivalent to $f \in R^J$ for atomic cosets, and also for more general cosets called \emph{core cosets}. We believe these are interesting questions, even though these more general Leibniz rules currently lack a clear connection to the theory of singular Soergel bimodules. %\BE{we could keep the following sentence or just remove it... upon further thought, this phenomenon just seems like it would be too rare... though I'm not really sure.} Preliminary investigation makes it appear plausible that such Leibniz rules might exist under the restriction that all smaller cosets are also core.

A special case would be when $I = J = \mt$, so that double cosets are in bijection with elements $w \in W$; all such cosets are core. %\LP{removed a false sentence.}%We now discuss this special case, which has no bearing on the remainder of the paper. 
We start with the following well-known lemma. 

\begin{lem} \label{lem:iteratedleibnizforw}
	Let $w\in W$ %\NL{Why the symmetric group? I think that this is true for any Coxeter system. It is also true whenever the twisted Leibniz rule is true, but we never say in what generality that is verified. We use it later for almost SW-realizations.}\LP{no reason anymore}
 and $f,g \in R$. Let $w=s_1\ldots s_n$ be a reduced expression and for $\eb=\{0,1\}^n$ we define the element $w^{\eb}=s_1^{e_1}\ldots s_n^{e_n}$. For $\eb=\{0,1\}^n$ let 
	\[ \theta_i^{\eb}=\begin{cases} s_i&\text{if }e_i=1\\
		\partial_i&\text{if }e_i=0 \end{cases} \text{ and }
	\Theta^{\eb}(f)=\theta_1^{\eb}\circ \theta_2^{\eb}\circ \ldots \circ \theta_n^{\eb}(f).\]
	
	Then we have
	\[ \pa_w(fg) = \sum_{x \leq w} T'_x(f) \pa_x(g)\]
	where 
	\[ T'_x(f)=\sum_{\eb \mid w^{\eb} =x} \Theta^{\eb}(f).\]
As a special case, $T_w'(f) = w(f)$.
\end{lem}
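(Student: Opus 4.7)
The natural approach is a straightforward induction on $n=\ell(w)$, iterating the twisted Leibniz rule \eqref{eq:twistedLeibniz} once for each simple reflection in the reduced expression.

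The plan is as follows. In the base case $n=0$ there is nothing to prove since $\pa_w=\id$ and $\{0,1\}^0$ has a unique element giving $\Theta^{\eb}=\id$. For the inductive step, write $w = s_1 w'$ where $w' = s_2\cdots s_n$, so that $\pa_w = \pa_{s_1}\circ \pa_{w'}$. By induction hypothesis applied to the reduced expression $s_2\cdots s_n$ for $w'$, we have
\[
\pa_{w'}(fg) \;=\; \sum_{\eb'\in\{0,1\}^{n-1}} \Theta^{\eb'}(f)\cdot \pa_{(w')^{\eb'}}(g),
\]
indexed by $\eb' = (e_2,\ldots,e_n)$. Now apply $\pa_{s_1}$ to both sides and use the twisted Leibniz rule \eqref{eq:twistedLeibniz} on each summand:
\[
\pa_{s_1}\!\bigl(\Theta^{\eb'}(f)\cdot\pa_{(w')^{\eb'}}(g)\bigr) \;=\; s_1(\Theta^{\eb'}(f))\cdot\pa_{s_1}\pa_{(w')^{\eb'}}(g) \;+\; \pa_{s_1}(\Theta^{\eb'}(f))\cdot\pa_{(w')^{\eb'}}(g).
\]
The first summand corresponds to extending $\eb'$ by $e_1=1$ (so $\theta_1^{\eb}=s_1$ and $w^{\eb}=s_1(w')^{\eb'}$), while the second corresponds to $e_1=0$ (so $\theta_1^{\eb}=\pa_{s_1}$ and $w^{\eb}=(w')^{\eb'}$). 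Summing over $\eb'$ rewrites $\pa_w(fg)$ as $\sum_{\eb\in\{0,1\}^n}\Theta^{\eb}(f)\cdot\pa_{w^{\eb}}(g)$, exactly as needed.

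To finish, I regroup the sum according to the value of $x=w^{\eb}$. Because $s_1\cdots s_n$ is a reduced expression for $w$, the Subword Property of the Bruhat order guarantees $w^{\eb}\leq w$ for every $\eb\in\{0,1\}^n$, so the resulting sum is indexed by $x\leq w$ with coefficients $T'_x(f)=\sum_{\eb:w^{\eb}=x}\Theta^{\eb}(f)$ as claimed. For the special case $x=w$, since the expression is reduced, the only $\eb$ producing $w^{\eb}=w$ is $\eb=(1,1,\ldots,1)$, which contributes $\Theta^{\eb}(f)=s_1s_2\cdots s_n(f)=w(f)$.

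The argument is essentially bookkeeping; the only mild subtlety is matching the order of compositions in $\Theta^{\eb}$ with the order in which the twisted Leibniz rule is applied (outermost $\pa_{s_1}$ first), and invoking the Subword Property to justify that only $x\leq w$ appear. I do not foresee a real obstacle — this lemma is standard and the inductive bookkeeping is routine.
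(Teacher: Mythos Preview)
Your proposal is correct and is precisely the paper's approach: the paper's entire proof reads ``This is just an iteration of the twisted Leibniz rule,'' and your induction on $\ell(w)$ spells out exactly that iteration.
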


\begin{proof} This is just an iteration of the twisted Leibniz rule. \end{proof}

To summarize, we obtain a generalized Leibniz rule of the form
\begin{equation} \label{version1intro} \pa_w(fg) = w(f) \pa_w(g) + \sum_{x < w} T'_x(f) \pa_x(g), \end{equation}
for operators $T'_x \co R \to R$ (depending on $w$). The \emph{formula} for $T'_x$ one derives in this way is seemingly dependent on the choice of reduced expression for $w$, though the \emph{operator} only depends\footnote{Abstractly, the nilHecke algebra is the subalgebra of $\End(R)$ generated by $R$ (i.e. multiplication by polynomials) and by Demazure operators. It is well known that the operators $\{\pa_w\}$ form a basis of the nilHecke algebra as a free left $R$-module. Letting $m_f$ denote multiplication by $f$,  \eqref{version1intro} can be viewed as an equality
\[ \pa_w \circ m_f = m_{w(f)} \circ \pa_w + \sum m_{T'_x(f)} \circ \pa_x \]
in the nilHecke algebra, which rewrites $\pa_w \circ m_f$ in this basis. Consequently, the coefficients $T'_x(f)$ in this linear combination depend only on $w$ and $f$.} on $w$. In practice, confirming the independence of reduced expression can be quite subtle. We are unaware of a formula for $T'_x$ which is obviously independent of the choice of reduced expression.

Meanwhile, one can also deduce an equality of the form
\begin{equation} \label{version2intro} \pa_w(fg) = w(f) \pa_w(g) + \sum_{x < w} \pa_x(T_x(f) \cdot g), \end{equation}
for some operators $T_x \co R \to R$. We are unaware of any previous study of the operators $T_x$ and the formula \eqref{version2intro}.

\begin{rem} Later in the paper, we also discuss an atomic Leibniz rule similar to \eqref{version1intro} rather than \eqref{version2intro}. \end{rem}

Here are some examples.

\begin{ex} Let $s = s_1$ and $t = s_2$. When $w = ts$, applying \eqref{eq:twistedLeibniz} twice gives
	\begin{equation} \label{tsfirst} \pa_{ts}(fg) = ts(f) \pa_{ts}(g) + \pa_t(sf) \pa_s(g) + t\pa_s(f) \pa_t(g) + \pa_{ts}(f) g. \end{equation}
	An equivalent formula is
	\begin{equation}\label{ts} \pa_{ts}(fg) = ts(f) \pa_{ts}(g) + \pa_t(\pa_s(f) \cdot g) + \pa_s(s\pa_t(sf) \cdot g) + \pa_{st}(sf) \cdot g. \end{equation}
	By applying \eqref{eq:twistedLeibniz} to the second and third terms on the RHS of equation \eqref{ts}, and with a little help from \eqref{quadsign}, one obtains \eqref{tsfirst}.
	%For this it helps to recall that $\pa_s(s(f))=-\pa_s(f)$. 
\end{ex}

\begin{ex} \label{ex:twistedleibnizsts} With notation as above,  when $w = sts$, applying  \eqref{eq:twistedLeibniz} thrice gives
	\begin{eqnarray} \label{version1sts} \pa_{sts}(fg) &= & sts(f) \pa_{sts}(g) + \pa_s(tsf) \pa_{ts}(g) + \pa_t(stf) \pa_{st}(g) \\ \nonumber &&+ \pa_s(t\pa_s(f)) \pa_t(g) + \pa_t(s\pa_t(f)) \pa_s(g) + \pa_{sts}(f) g.\end{eqnarray}
	More honestly, applying \eqref{eq:twistedLeibniz} thrice gives the above except that the coefficient of $\pa_s(g)$ is $\pa_{st}(sf) + s \pa_{ts}(f)$. By applying $s$ to \eqref{tricky}, one obtains 
	\begin{equation} \pa_{st}(sf) + s \pa_{ts}(f) = st \pa_{st}(f) = \pa_t(st \pa_t(f)) = \pa_t (s \pa_t(f)). \end{equation}
	This is how one deduces \eqref{version1sts}.
	
	An equivalent formula is
	\begin{eqnarray} \label{version2sts} \pa_{sts}(fg) &=& sts(f) \pa_{sts}(g) + \pa_{ts}(\pa_t(f) \cdot g) + \pa_{st}(\pa_s(f) \cdot g)\\ \nonumber && - \pa_t(\pa_{st}(f) \cdot g) - \pa_s(\pa_{ts}(f) \cdot g) + \pa_{sts}(f) \cdot g. \end{eqnarray}
	To verify that \eqref{version2sts} and \eqref{version1sts} agree, apply \eqref{ts} to the term $\pa_{ts}(\pa_t(f) \cdot g)$, and apply \eqref{ts} with $s$ and $t$ swapped to $\pa_{st}(\pa_s(f) \cdot g)$. After some additional massaging using \eqref{quadsign} and \eqref{eq:braiddem}, one will recover \eqref{version1sts}. 
	
	% \NL{To verify this equation, apply \eqref{eq:twistedLeibniz} (twice each time) to the second and third terms on the RHS of equation \ref{version2sts}, and only once to the fourth and fifth terms, then apply the identities: }\HK{going from \eqref{version1sts} to \eqref{version2sts} is okay but my difficulty was in deriving \eqref{version1sts} after applying leibniz three times. There one needs the first equality in \eqref{trickyidentities} and, instead of the second one there, $\pa_{st}(sf) + s\pa_{ts}(f) =\pa_t s \pa_t(f)$, when I computed and I cannot prove this new equality (sage says it is true). I do not know where to use the second \eqref{trickyidentities}.}\HK{Also I am not sure with NL explanation; if you apply Leibniz to \eqref{version2sts} then it seems only the first equality in \eqref{trickyidentities} that is neccessary to get \eqref{version1sts}}

	% \begin{equation} \label{trickyidentities} st\pa_s(f) = \pa_t(stf), \qquad \pa_{st}(sf) + \pa_{ts}(f) = t \pa_{st}(f). \end{equation}
\end{ex}

\section{Atomic Leibniz rules}\label{s.AL}

\subsection{Realizations}\label{ss.realizations}
 
We fix a Coxeter system $(W,S)$. We let $e$ denote the identity element of $W$. Recall the definition of a realization from \cite[\S 3.1]{Soergelcalculus}.

\begin{defn} A \emph{realization} of $(W,S)$ over $\Bbbk$ is
the data $(\Bbbk,V,\Delta, \Delta^\vee)$ of a commutative ring $\Bbbk$, a free finite-rank $\Bbbk$-module $V$, a set $\Delta = \{\alpha_s\}_{s \in S}$ of \emph{simple roots} inside $V$, and a set $\Delta^\vee = \{\alpha_s^\vee\}_{s \in S}$ of \emph{simple coroots} inside $\Hom_{\Bbbk}(V,\Bbbk)$, satisfying the following properties. One has $\alpha_s^\vee(\alpha_s) = 2$ for all $s \in S$. The formula
\[ s(v) := v - \alpha_s^\vee(v) \cdot \alpha_s \]
defines an action of $W$ on $V$. Also, the technical condition \cite[(3.3)]{Soergelcalculus} holds, which is redundant for most base rings $\Bbbk$. \end{defn}

For short, we often refer to the data of a realization simply by reference to the $W$-representation $V$.

\begin{ex} \label{ex:permrealization} The \emph{permutation realization} of $\symm_n$ over $\Z$ has $V = \Z^n$ with basis $\{x_i\}_{i=1}^n$ and with dual bases $\{x_i^*\}$. For $1 \le i \le n-1$ one sets $\alpha_i = x_i - x_{i+1}$ and $\alpha_i^\vee = x_i^* - x_{i+1}^*$.\end{ex}

\begin{ex} For a Weyl group $W$, the \emph{root realization} of $(W,S)$ over $\Z$ is the free $\Z$-module with basis $\Delta$. One defines $\Delta^\vee$ so that the pairings $\alpha_s^\vee(\alpha_t)$ agree with the usual Cartan matrix of $W$. \end{ex}

\begin{ex} Let $(\Bbbk,V,\Delta,\Delta^\vee)$ be a realization of $(W,S)$, and $I \subset S$. Then $(\Bbbk,V,\Delta_I,\Delta_I^\vee)$ is also a realization of $(W_I,I)$, the \emph{restriction of the realization to a parabolic subgroup}. Here $\Delta_I = \{\alpha_s\}_{s \in I}$, and similarly for $\Delta_I^{\vee}$. \end{ex}

Given a realization, let $R$ be the polynomial ring whose linear terms are $V$. We can associate Demazure operators $\pa_s \colon R \to R$ for $s \in S$, which agree with $\alpha_s^\vee$ on $V \subset R$, and are extended by the twisted Leibniz rule. For details, see \cite[\S 3.1]{Soergelcalculus}.
For each finitary subset $I \subset S$, we also consider the subring $R^I$ of $W_I$-invariants in $R$. The ring $R^I$ is graded, and all $R^I$-modules will be graded, but we will not keep track of grading shifts in this paper as they will play no significant role. The background on this material in \cite{KELP1} should be sufficient.

\begin{defn} \label{defn:frobrealization}  A \emph{(balanced) Frobenius realization} is a realization satisfying the following properties, see \cite[\S 3.1]{KELP1} for definitions. \begin{itemize}
\item It is balanced.
\item It satisfies generalized Demazure surjectivity.
\item It is faithful when restricted to each finite parabolic subgroup $W_I$.
\end{itemize}
\end{defn}

We assume tacitly throughout this paper that we work with a Frobenius realization. The main implication of these assumptions is that, when $I \subset S$ is finitary, the Demazure operator $\pa_{w_I} \co R \to R^I$ is well-defined and equips the ring extension $R^I \subset R$ with the structure of a Frobenius extension.

The left and right redundancy sets of an $(I,J)$-coset $p$ are defined and denoted as
\begin{equation} \leftred(p) = I \cap \mi{p} J \mi{p}^{-1}, \qquad \rightred(p) = \mi{p}^{-1} I \mi{p} \cap J. \end{equation}
An $(I,J)$-coset $p$ is a \emph{core coset} if $I = \leftred(p)$ and $J = \rightred(p)$.

For any $(I,J)$-coset $q$, in \cite{KELP1} we define a Demazure operator \[ \pa_q : R^J \to R^I.\] By definition, $\pa_q$ is the restriction of the ordinary Demazure operator 
\[ \pa_{\ma{q} w_J^{-1}} \co R \to R\] to the subring $R^J$. After restriction, the image is contained in $R^I$, see \cite[Lemma 3.9]{KELP1}. Note that $\ma{q} w_J^{-1} = \mi{q}$ if and only if $q$ is a core coset.

\begin{rem} %\BE{added}\NL{I don't see the use of this remark}\BE{It's there in case someone says: look, that other paper has additional assumptions...} 
Some results from \cite{KELP1} require further that the realization is faithful, rather than just faithful upon restriction to each finite parabolic subgroup. In particular, the set $\{\pa_p\}$ as $p$ ranges over $(I,J)$-cosets need not be linearly independent when the realization is not faithful. \end{rem}

A \emph{multistep $(I,J)$-expression} is a sequence of finitary subsets 
\[ I_{\bullet} = [[I= I_0 \subset K_1 \supset I_1 \subset \ldots K_m \supset I_m = J]]. \] The definition of a \emph{reduced} multistep expression, and of the $(I,J)$-coset that it \emph{expresses}, can be found in \cite[Definition 1.4]{EKo}. When $I_{\bullet}$ is a (reduced) expression which expresses $p$, we write $p \expr I_{\bullet}$.

As in \cite{EKo}, for $x, y \in W$ we write $x.y$ for a reduced composition, where $\ell(xy) = \ell(x) + \ell(y)$. We also use this notation for the reduced composition of reduced expressions, or the reduced composition of double cosets, see \cite{EKo} for more details. Demazure operators compose well over reduced compositions: one has $\pa_{p.q} = \pa_p \circ \pa_q$, as proven in \cite[Corollary 3.19]{KELP1}.

\subsection{Precise statement of atomic Leibniz rules}

\begin{rem} Note that $w_J$ is an involution, so $w_J = w_J^{-1}$. We write $\ma{q} w_J^{-1}$ above to emphasize that $\ma{q} = (\ma{q} w_J^{-1}) . w_J$. \end{rem}

%\BE{switch to $\at$??}\NL{I thought the same, it would be good but it is a ridiculous amount of effort to change it through 20 pages.}\NL{OK, I just spent two hours of my beautiful life in this, it is done!}

\begin{defn} \label{defn:atomicLeibniz} Suppose $M$ is finitary, $s \in M$, and $t = w_M s w_M$. Let $I = \hat{s} := M \setminus s$, and $J = \hat{t} := M \setminus t$. Let $\at$ be the (atomic) $(I,J)$-coset containing $w_M$.  We say a \emph{(rightward) atomic Leibniz rule} holds for $\at$ if there exist $R^M$-linear operators $T^\at_q$ from $R^{J}$ to $R^{\rightred(q)}$ for each $(I,J)$-coset $q < \at$, such that for any $f, g \in R^{J}$ we have
	\begin{equation} \label{eq:atomicleibniz} \pa_\at(f \cdot g) = \mi{\at}(f) \pa_\at(g) + \sum_{q < \at} \pa_{\ma{q} w_J^{-1}}(T^\at_q(f) \cdot g). \end{equation}
\end{defn}

We encourage the reader to confirm that $T^\at_q(f) \in R^{\rightred(q)}$ in the examples of \S\ref{subsec:examplesintro}. We continue to write $T_q$ instead of $T^\at_q$ when $\at$ is understood.

\begin{rem} We say ``an atomic Leibniz rule'' rather than ``the atomic Leibniz rule'' because we are defining a prototype for a kind of formula.  If one specifies operators $T_q$ such that the formula holds, then one has produced ``the'' atomic Leibniz rule for that coset $\at$ (indeed, we prove in \Cref{thm.AL=PF} that such operators are unique for certain realizations). \end{rem}

The difference between \eqref{eq:atomicleibniz} and \eqref{atomicprototype} is subtle: we have written $\pa_{\ma{q} w_J^{-1}}$ instead of $\pa_q$. The difference between $\pa_{\ma{q} w_J^{-1}}$ and $\pa_q$ is only a matter of the domain and codomain of the functions: the former is a function $R \to R$, while the latter is its restriction to a function $R^J \to R^I$. Meanwhile, $T_q(f)$ lives in $R^{\rightred(q)}$. The inclusion $R^J \subset R^{\rightred(q)}$ is proper unless $q$ is core. It is therefore inappropriate to apply $\pa_q$ to $T_q(f) \cdot g$. Having altered notation so that the domain of the operator is appropriate, we still need to worry about the codomain, which we address in the following lemma.

\begin{lem} With notation as in Definition \ref{defn:atomicLeibniz}, we have $\pa_{\ma{q} w_J^{-1}}(T_q(f) \cdot g) \in R^I$. \end{lem}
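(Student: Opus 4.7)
The plan is to reduce the claim to \cite[Lemma 3.9]{KELP1}, which establishes that the Demazure operator $\pa_p$ attached to any $(I',J')$-coset $p$ sends $R^{J'}$ into $R^{I'}$. Observe first that since $T_q(f) \in R^{\rightred(q)}$ and $g \in R^J \subset R^{\rightred(q)}$ (as $\rightred(q) \subset J$), the product $T_q(f) \cdot g$ lies in $R^K$ where $K := \rightred(q)$. So it suffices to show that $\pa_w$ sends $R^K$ into $R^I$, where $w := \ma{q} w_J^{-1}$.

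To this end, I would introduce the auxiliary $(I, K)$-coset $\tilde q := W_I \mi{q} W_K$. Its minimal element is $\mi{q}$, since $\mi{q}$ has no left descent in $I$ (being minimal in $q$) and no right descent in $J \supset K$. Setting $L := \leftred(q)$, the conjugacy $\mi{q}^{-1} L \mi{q} = K$ together with $L \subset I$ gives $\leftred(\tilde q) = I \cap \mi{q} K \mi{q}^{-1} = I \cap L = L$ and $\rightred(\tilde q) = K$. The standard length formula $\ell(\ma{p}) = \ell(\mi{p}) + \ell(w_{I'}) + \ell(w_{J'}) - \ell(w_{\leftred(p)})$ then yields the reduced factorizations
\[ \ma{q} = (w_I w_L) \cdot \mi{q} \cdot w_J, \qquad \ma{\tilde q} = (w_I w_L) \cdot \mi{q} \cdot w_K, \]
since in each case the product visibly lies in the respective coset and has length matching that of the maximum. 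Cancelling the rightmost factors,
\[ w = \ma{q} w_J^{-1} = (w_I w_L) \mi{q} = \ma{\tilde q} w_K^{-1}, \]
so $\pa_w|_{R^K}$ coincides with $\pa_{\tilde q}$. Applying \cite[Lemma 3.9]{KELP1} to $\tilde q$, this operator sends $R^K$ to $R^I$, whence $\pa_w(T_q(f) \cdot g) \in R^I$.

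The only step with genuine content is the reduced factorization of $\ma{q}$, which is standard double-coset combinatorics: the product $(w_I w_L) \mi{q} w_J$ lies in $q = W_I \mi{q} W_J$, its length is at most $\ell(w_I) - \ell(w_L) + \ell(\mi{q}) + \ell(w_J) = \ell(\ma{q})$ by the length formula above, and since $\ma{q}$ is the unique length-maximal element of $q$, equality must hold, forcing the composition to be reduced and equal to $\ma{q}$.
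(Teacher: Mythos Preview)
Your approach is essentially the same as the paper's: both introduce an auxiliary $(I,\rightred(q))$-coset---your $\tilde q = W_I \mi{q} W_K$ is exactly the paper's coset $z \expr [[I \supset \leftred(q)]].q^{\core}$---and both conclude by identifying $\pa_{\ma{q}w_J^{-1}}$ on $R^{\rightred(q)}$ with the Demazure operator of that coset. The paper derives the key equality $\ma{q}w_J^{-1} = \ma{z}w_{\rightred(q)}^{-1}$ from the reduced expression \eqref{rexforq} together with \cite[Proposition 4.3]{EKo}, whereas you compute it from the explicit length formula for $\ma{q}$.

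One small gap: in your final paragraph, the inequality goes the wrong way for the conclusion you draw. Knowing that $(w_I w_L)\mi{q}w_J$ lies in $q$ and has length \emph{at most} $\ell(\ma{q})$ does not force it to equal $\ma{q}$; every element of $q$ has length at most $\ell(\ma{q})$. What you actually need is that the product is reduced, which would give length \emph{equal to} $\ell(w_I)-\ell(w_L)+\ell(\mi{q})+\ell(w_J) = \ell(\ma{q})$ and hence force the product to be $\ma{q}$. The reducedness is true (it is the standard Howlett-type factorization of the maximal coset representative) but needs an independent justification; the paper's route via \cite[Proposition 4.3]{EKo} supplies exactly this.
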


\begin{proof}
	Recall from \cite[Proposition 4.28]{EKo} that any $(I,J)$-coset $q$ has a reduced expression of the form
	\begin{equation}\label{rexforq}  q \expr [[I \supset \leftred(q)]] . q^{\core} . [[\rightred(q) \subset J]]. \end{equation}
	Let $z$ be the $(I,\rightred(q))$-coset with reduced expression
	\begin{equation}\label{eq:z} z \expr [[I \supset \leftred(q)]] . q^{\core}. \end{equation}
	Since \eqref{rexforq} is reduced, by \cite[Proposition 4.3]{EKo}, we have $	 \ma{q} = \ma{z}.(w_{\rightred(q)}^{-1} w_J),$ so that \begin{equation}\label{eq:qj}\ma{q} w_J^{-1} = \ma{z} w_{\rightred(q)}^{-1}.\end{equation} Consequently, the same operator $\pa_{\ma{q} w_J^{-1}} \co R \to R$ restricts to both $\pa_q \co R^J \to R^I$ and $\pa_z \co R^{\rightred(q)} \to R^I$. In particular, this operator sends $R^{\rightred(q)}$ to $R^I$. \end{proof}

Further elaboration will be helpful in subsequent chapters. By \cite[Lemma 3.17]{KELP1}, the reduced expression \eqref{rexforq} implies that the map
$\pa_q$ is a composition of three Demazure operators.  Recall that $\pa_{[[I \supset \leftred(q)]]}$ is the Frobenius trace map $R^{\leftred(q)} \to R^I$ often denoted as $\pa^{\leftred(q)}_I$. Recall also that $\pa_{[[\rightred(q) \subset J]]}$ is the inclusion map $R^J \subset R^{\rightred(q)}$. We denote this inclusion map $\iota^{\rightred(q)}_J$ below. So we have
\begin{equation} \label{eq:factorpaq}  \pa_q = \pa^{\leftred(q)}_{I} \circ \pa_{q^{\core}} \circ \iota^{\rightred(q)}_J. \end{equation}
By \eqref{eq:z} we have
\begin{equation} 
% \pa_{\overline{q}w_J^{-1}}\big|_{R^{\rightred(q)}}=
\pa_z= \pa^{\leftred(q)}_{I} \circ \pa_{q^{\core}},\end{equation}
which agrees with the restriction of $\pa_{\ma{q} w_J^{-1}}$ to $R^{\rightred(q)}$.
Thus  one has the following reformulation of \eqref{eq:atomicleibniz}:

\begin{equation} \label{eq:atomicleibnizBETTER} \pa_\at(f \cdot g) = \mi{\at}(f) \pa_\at(g) + \sum_{q < \at} \pa^{\leftred(q)}_I \pa_{q^{\core}} (T_q(f) \cdot \iota^{\rightred(q)}_{J}(g)). \end{equation}
Now the polynomial $T_q(f)$ appears more appropriately in the ``middle'' of this factorization of $\pa_q$. This discussion of the ``placement'' of the polynomial $T_q(f)$ will play a role in our diagrammatic proof of polynomial forcing.

We are now prepared to discuss another version of the atomic Leibniz rule, using the factorization \eqref{eq:factorpaq}. It should not be obvious that these two atomic Leibniz rules are related, though the equivalence with polynomial forcing will shed light on this issue.

\begin{defn} \label{defn:atomicLeibnizALT} Use the notation from Definition \ref{defn:atomicLeibniz} and from \eqref{eq:factorpaq}. We say that a \emph{leftward atomic Leibniz rule} holds for $\at$ if there exist $R^M$-linear operators $T'_q$ from $R^J$ to $R^{\leftred(q)}$ for each $(I,J)$-coset $q < \at$, such that for any $f, g \in R^{J}$ we have
\begin{equation} \label{eq:atomicleibnizALT} \pa_\at(f \cdot g) = \mi{\at}(f) \pa_\at(g) + \sum_{q < \at} \pa^{\leftred(q)}_{I}(T'_q(f) \cdot \pa_{q^{\core}}(\iota_{J}^{\rightred(q)}(g))). \end{equation}
\end{defn}

\begin{rem} \label{rmk:potentialgeneralization} The fact that $T_q(f)$ lives in $R^{\rightred(q)}$ and not in $R^J$ is easy to overlook, but overlooking it is dangerous. We have attempted to prove atomic Leibniz-style rules for more general families of cosets (core cosets, cosets whose core is atomic, etcetera). Each time what prevents one from bootstrapping from the atomic case to more general cases is the fact that $T_q(f)$ does not live in $R^J$. The generalization in Example \ref{ex:leibnizforq} has the special feature that the lower cosets are all core, so that their right redundancy equals $J$. (It also has the special feature that $q^{\core}$ is atomic.) \end{rem}

\subsection{Changing the realization} \label{ssec:changerealization}

We argue that the atomic Leibniz rule for some realizations implies the atomic Leibniz rule for others. Given a realization, one can obtain another realization by applying base change $(-) \ot_{\Bbbk} \Bbbk'$ to $V$, and choosing new roots and coroots in the natural way. We call this a \emph{specialization}. Here are two other common ways to alter the realization.

\begin{defn} Let $(V,\Delta,\Delta^\vee)$ be a realization of $(W,S)$ over $\Bbbk$. Let $N$ be a free $\Bbbk$-module acted on trivially by $W$. Then $(V \oplus N, \Delta \oplus 0,\Delta^\vee \oplus 0)$ is a realization, called a \emph{$W$-invariant enlargement} of the original. More precisely, the new roots are the image of the old roots under the inclusion map, and the new coroots kill the summand $N$. \end{defn}

\begin{defn} Let $(V,\Delta,\Delta^\vee)$ be a realization of $(W,S)$ over $\Bbbk$. Suppose one has a decomposition $V = X \oplus Y$ of free $\Bbbk$-modules, such that $W$ acts trivially on $X$, though $W$ need not preserve $Y$. Note that the coroots necessarily annihilate $X$.  Then $(Y, \overline{\Delta},\Delta^\vee_Y)$ is a realization, called a \emph{$W$-invariant quotient} of the original. Here, we identify $Y$ as the quotient $V/X$, and $\overline{\Delta}$ represents the image of $\Delta$ under the quotient map. The functionals $\Delta^\vee$ kill $X$, so they descend to functionals $\Delta^\vee_Y$ on $Y$.  We also make the technical assumption\footnote{This assumption is required for the $W$-invariant quotient to satisfy Demazure surjectivity.} that $\alpha_s$ induces a surjective map $Y^* \to \Bbbk$.
% , while $\Delta^\vee_Y$ represents the restriction of the functionals $\Delta^\vee$ from $V$ to $Y$\footnote{In this case, restriction to $Y$ agrees with descent to the quotient by $X$, for a linear operator which kills $X$.}. 
\end{defn}

\begin{ex} Let $(W,S)$ have type $\widetilde{A}_{n-1}$, with simple reflections $s_i$ for $1 \le i \le n$. Let $V$ be the free $\Z$-module spanned by $\{x_i\}_{i=1}^n$ and $\delta$. Let $\{x_i^*\} \cup \{\delta^*\}$ denote the dual basis in $\Hom_{\Bbbk}(V,\Bbbk)$. With indices considered modulo $n$, let $\alpha_i = x_i - x_{i+1} + \delta$, and $\alpha_i^\vee = x_i^* - x_{i+1}^*$. This is a realization of $(W,S)$ called the \emph{affine permutation realization}. Note that $\sum_{i=0}^{n-1} \alpha_i = n \delta$, which is $W$-invariant. Let $X$ be the span of $\delta$, and $Y$ be the span of $\{x_i\}_{i=1}^n$. Note that $W$ does not preserve $Y$, since the roots are not contained in $Y$. There is a valid $W$-invariant quotient $(Y,\overline{\Delta}, \Delta^\vee_Y)$ which agrees, upon restriction to the parabolic subgroup $\symm_n$ generated by $\{s_i\}_{i=1}^{n-1}$, with the permutation representation. \end{ex}

\begin{ex} \label{ex:restrictingaffineperm} Continuing the previous example, let $y_i = x_i - i \delta$. Then we can also view $V$ as having basis $\{y_i\}_{i=1}^n \cup \{\delta\}$, and $\alpha_i = y_i - y_{i+1}$ for $i \ne n$. Upon restriction to the parabolic subgroup $\symm_n$, we see that $V$ is isomorphic to the $W$-invariant enlargement of the permutation representation of $\symm_n$ (with basis $\{y_i\}$) by the $W$-invariant span of $\delta$.

Indeed, $W$ has $n$ distinct maximal parabolic subgroups isomorphic to $\symm_n$ as groups. A similar construction will show that the restriction of $V$ to any maximal parabolic subgroup (a copy of $\symm_n$) will be isomorphic to an invariant enlargement of its permutation representation. \end{ex}

\begin{lem}\label{lem:changerealization} If a rightward (resp. leftward) atomic Leibniz rule holds for a %\BE{added word}
Frobenius realization, then it also holds for specializations, $W$-invariant enlargements, and $W$-invariant quotients. \end{lem}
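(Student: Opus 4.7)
The plan is to handle each operation by showing that the atomic Leibniz equation transfers because the polynomial ring, invariant subrings, and Demazure operators transfer compatibly; in each case we produce new operators $T'_q$ from $T^\at_q$ by extension of scalars or by descent, and read off the equation. The leftward version \eqref{eq:atomicleibnizALT} is handled identically because the factorization \eqref{eq:factorpaq} is preserved under each operation, so only the rightward case is written out below.

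For a specialization $\Bbbk \to \Bbbk'$, let $R' = R \ot_\Bbbk \Bbbk'$. The Frobenius structure of $R^I \subset R$ is preserved by base change, so $(R')^I = R^I \ot_\Bbbk \Bbbk'$ for every finitary $I$, and the Demazure operators $\pa_s$ extend $\Bbbk'$-linearly. Define $T'_q := T^\at_q \ot_\Bbbk \id_{\Bbbk'}$; this is $(R')^M$-linear and has the correct codomain $(R')^{\rightred(q)}$. Both sides of \eqref{eq:atomicleibniz} over $R'$ are $(R')^M$-bilinear in $(f,g)$, so it suffices to verify the equation on the image of $R^J \ot R^J$, where it is immediate from the rule over $\Bbbk$. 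The $W$-invariant enlargement case is formally identical, with the scalar ring $\Sym(N)$ playing the role of $\Bbbk'$: one has $R' = R \ot_\Bbbk \Sym(N)$ and, since $W$ fixes $N$, $(R')^I = R^I \ot_\Bbbk \Sym(N)$; the Demazure operators extend $\Sym(N)$-linearly and we set $T'_q := T^\at_q \ot \id_{\Sym(N)}$.

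For a $W$-invariant quotient $V = X \oplus Y \twoheadrightarrow Y$, let $R'' = \Sym(Y) = R/(X \cdot R)$ and denote by $\pi \co R \twoheadrightarrow R''$ the projection. Since $W$ acts trivially on $X$, the decomposition $V = X \oplus Y$ is $W$-stable, so the tensor factorization $R = \Sym(X) \ot_\Bbbk R''$ is $W$-equivariant and restricts to $R^I = \Sym(X) \ot_\Bbbk (R'')^I$ for every finitary $I$; in particular $\pi$ induces a surjection $R^I \twoheadrightarrow (R'')^I$ with kernel $X \cdot R^I$. Because each coroot $\alpha_s^\vee$ vanishes on $X$, the map $\pi$ intertwines the Demazure operators (check on linear terms and propagate via the twisted Leibniz rule, using $W$-equivariance of $\pi$). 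Since $X \subset R^W \subset R^M$, the operator $T^\at_q$ is $\Sym(X)$-linear, hence descends to an operator $T''_q \co (R'')^J \to (R'')^{\rightred(q)}$. To obtain \eqref{eq:atomicleibniz} over $R''$, lift arbitrary $f'', g'' \in (R'')^J$ to $f, g \in R^J$ (possible by the surjectivity just noted), apply the rule in $R$, and project term-by-term.

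The main technical obstacle is ensuring the compatibility of invariant subrings with each operation: namely, that $(R')^I$ is the expected base change, enlargement, or quotient of $R^I$ for every finitary $I$. For specialization this relies on the Frobenius hypothesis (via a Frobenius basis that survives base change); for enlargement and quotient it follows directly from the triviality of the $W$-action on the added or removed summand, together with the fact that tensoring with a free $\Bbbk$-module commutes with taking invariants. Once these compatibilities and those of the Demazure operators are in hand, the transfer of the Leibniz equation is formal.
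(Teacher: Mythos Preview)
Your overall approach matches the paper's: in all three cases one has $R_{\new} = R \ot_A B$ for suitable $W$-trivial $A \subset R$ and $B$, one sets $T_{q,\new} = T^\at_q \ot \id$, and one checks \eqref{eq:atomicleibniz} on simple tensors. The specialization and enlargement cases are fine.

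There is a genuine error in the quotient case. You write ``Since $W$ acts trivially on $X$, the decomposition $V = X \oplus Y$ is $W$-stable,'' and from this deduce that $R = \Sym(X) \ot_\Bbbk R''$ is $W$-equivariant and hence $R^I = \Sym(X) \ot_\Bbbk (R'')^I$. But the definition of a $W$-invariant quotient explicitly allows that $W$ need \emph{not} preserve $Y$; only $X$ is assumed $W$-stable. (The affine example in the paper, where $s_n(x_1) = x_n + \delta$, shows this happens in practice.) Consequently the tensor decomposition $R \cong \Sym(X) \ot_\Bbbk \Sym(Y)$ is an isomorphism of $\Bbbk$-algebras but not of $W$-modules, and your identification $R^I = \Sym(X) \ot_\Bbbk (R'')^I$ is unjustified. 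A small example: with $V = \Bbbk x \oplus \Bbbk y$, $s(y)=y$, $s(x)=-x+y$, one has $R^s = \Bbbk[y, xy-x^2]$, and $x^2 \notin R^s$, so $R^s \ne \Bbbk[y] \ot \Bbbk[x^2]$ as subrings of $R$.

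What you actually need (and use) is only the surjectivity of $\pi \co R^I \to (R'')^I$, and this is true, but requires a different argument. The paper proves the uniform statement $(R_{\new})^I = R^I \ot_A B$ using generalized Demazure surjectivity: choose $P \in R$ with $\pa_I(P)=1$; then for $g \in (R_{\new})^I$ one has $g = g \cdot \pa_I^{\new}(P \ot 1) = \pa_I^{\new}(g \cdot (P \ot 1))$, which visibly lies in $R^I \ot_A B$. Once you replace your $W$-stability claim with this argument, the rest of your quotient case (descent of Demazure operators and of $T^\at_q$, lifting and projecting) goes through.
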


\begin{proof}  Let $R$ be the ring associated to the original realization, and $R_{\new}$ be the realization associated to the specialization, enlargement, or quotient.
All three cases are united by the fact that $R_{\new}$ is a tensor product of the form $R \ot_{A} B$, where $A \subset R$ is a subring on which $W$ acts trivially, and $B$ is a ring on which $W$ acts trivially. For specializations we have $R_{\new} = R \ot_{\Bbbk} \Bbbk'$; for enlargements we have  $R_{\new} = R \ot_{\Bbbk} R_N$, where $R_N$ is the polynomial ring of $N$; for quotients we have $R_{\new} = R \ot_{R_X} \Bbbk$, where $R_X$ is the polynomial ring of $X$, and $\Bbbk$ is its quotient by the ideal of positive degree elements. For $w \in W$, its action on $R_{\new}$ is given by $w \ot \id$. The roots in $R_{\new}$ are given by $\alpha_s \ot 1$, and the Demazure operators $\pa^{\new}_s$ on $R_{\new}$ have the form $\pa_s \ot \id$.

The important point in all three cases is that for each $I \subset S$ finitary we have $R_{\new}^I = R^I \ot_A B$. We now prove this somewhat subtle point. There is an obvious inclusion $R^I \ot_A B \subset R_{\new}^I$, so we need only show the other inclusion.

It is straightforward to verify that the new realization satisfies generalized Demazure surjectivity.
% \HK{I suggest moving the previous sentence to somewhere else, since this paragraph uses Demazure surjectivity, not generalized, and the next paragraph uses the generalized surjectivity. Maybe to the defintion of these realizations?}\LP{I would leave it as it is. It is basically the same verification in both cases}
%\HK{And then 'In particular, $\pa^{\new}_I$ is also $R_{\new}^I$-linear.' should be 'It follows that' and in this paragraph?}\LP{Ok}
A consequence is that the typical properties of Demazure operators are satisfied. For example, the kernel and the image of $\partial^{\new}_s$ are both equal to $R_{\new}^s$, and $\partial^{\new}_s$ is $R^s_{\new}$-linear. 
It follows that $\pa^{\new}_I$ is also $R_{\new}^I$-linear.

Suppose that $g \in R_{\new}^I$, and write $g = \sum f_i \ot b_i$. Choose some $P \in R$ with $\pa_I(P) = 1$, which exists by generalized Demazure surjectivity. Then $\pa^{\new}_I(P \ot 1) = 1 \ot 1$ in $R_{\new}$. Thus
\begin{equation} g = g \pa^{\new}_I(P \ot 1) = \pa^{\new}_I(g \cdot (P \ot 1)) = \sum \pa_I(f_i \cdot P) \ot b_i. \end{equation}
Thus $g \in R^I \ot_A B$.

The rest of the proof is straightforward. Fix an atomic coset $\at$. For each $q < \at$, given operators $T_q$ for the original realization satisfying \eqref{eq:atomicleibniz}, we define $T_{q,\new} := T_q \ot \id$. By linearity, we need only check \eqref{eq:atomicleibniz} for $R_{\new}$ on elements in $R_{\new}^J$ of the form $f \ot b_1$ and $g \ot b_2$ for $f, g \in R^J$. It is easy to verify \eqref{eq:atomicleibniz} for $R_{\new}$ on such elements, since all operators (like $\mi{\at}$ or $\pa_{\ma{q} w_J^{-1}}$) are applied only to the first tensor factor, where we can use the atomic Leibniz rule from $R$. We conclude by noting that $T_{q,\new}$ has the appropriate codomain as well.
\end{proof}

We do not claim that any statements about the unicity of the operators $T_q$ will extend from a realization to its specializations, enlargements, or quotients.

\begin{lem} Let $(\Bbbk,V,\Delta,\Delta^\vee)$ be a realization of $(W,S)$. If one can prove an atomic Leibniz rule for the restriction of $V$ to $W_M$, for all (maximal) finitary subsets $M \subset S$, then an atomic Leibniz rule holds for $W$. \end{lem}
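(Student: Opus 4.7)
The plan is to show that the atomic Leibniz rule for an atomic $(I,J)$-coset $\at$ of $(W,S)$ is literally the same equation as the atomic Leibniz rule for $\at$ viewed as an atomic coset of $(W_M,M)$, where $M$ is the finitary subset associated to $\at$ as in Definition \ref{defn:atomicLeibniz}. Once this is established, the desired implication is immediate.

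First I would verify that every piece of data appearing in \eqref{eq:atomicleibniz} is intrinsic to $W_M$. Since $\ma{\at}=w_M\in W_M$, and any $(I,J)$-coset $q\le\at$ satisfies $\mi{q}\le\mi{\at}=w_Mw_J\in W_M$, we have $q\subseteq W_M$; moreover the induced Bruhat order on these cosets is the same whether computed in $W$ or in $W_M$. The rings $R$, $R^I$, $R^J$, $R^M$, and $R^{\leftred(q)}$, $R^{\rightred(q)}$ depend only on the action of the respective parabolic subgroups on the polynomial ring of $V$, which is unchanged under restriction to $W_M$. The redundancies $\leftred(q)=I\cap\mi{q}J\mi{q}^{-1}$ and $\rightred(q)=\mi{q}^{-1}I\mi{q}\cap J$ are computed from elements of $W_M$ and subsets of $M$, so they are the same in both pictures. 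Finally, for any $s\in M$ the Demazure operator $\pa_s\colon R\to R$ is defined by the same formula in both realizations, and $\pa_\at$, $\pa_q$, $\pa_{q^{\core}}$, $\pa^{\leftred(q)}_I$, $\iota^{\rightred(q)}_J$ are all built as compositions of these $\pa_s$ for $s\in M$ together with multiplication and inclusion maps.

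Consequently, equation \eqref{eq:atomicleibniz} for $\at$ in $(W,S)$ is literally identical to \eqref{eq:atomicleibniz} for $\at$ in $(W_M,M)$: the left-hand sides are equal as maps $R^J\otimes R^J\to R^I$, and the right-hand sides are linear combinations of the same operators. Thus any family of $R^M$-linear operators $T^\at_q\colon R^J\to R^{\rightred(q)}$ witnessing the atomic Leibniz rule for $\at$ inside $(W_M,M)$ also witnesses it inside $(W,S)$.

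For the parenthetical strengthening to maximal finitary subsets, if $M$ is not already maximal pick any maximal finitary $M'\supseteq M$ (or note $M$ itself, should it be maximal). The atomic data $(M,s,t)$ defining $\at$ is unchanged when $\at$ is regarded as a coset of $(W_{M'},M')$, so $\at$ is still atomic there with the same $I,J$. Applying the hypothesis at $M'$ gives the atomic Leibniz rule for $\at$ inside $(W_{M'},M')$, and the argument of the previous paragraphs (applied with $W_{M'}$ in place of $W$) transfers it to $(W,S)$. The only step requiring mild care is checking that the restriction-of-realization operation is transitive, i.e.\ that the restriction of $V$ to $W_M$ equals the restriction to $W_{M'}$ further restricted to $W_M$, which is clear from the definition.

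There is no substantive obstacle here; the whole content of the proof is the observation that the atomic Leibniz rule for $\at$ is a statement purely about the finite Coxeter system $W_M$ and its action on $R$, and is insensitive to the surrounding group.
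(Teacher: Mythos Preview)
Your proposal is correct and follows essentially the same approach as the paper. The paper's proof is a single sentence (``Every atomic coset in $W$ lives within $W_M$ for some finitary $M$ \ldots\ and the same atomic Leibniz rule which works for $W_M$ will work for $W$''), and your version simply unpacks this by verifying that every ingredient of \eqref{eq:atomicleibniz} --- the cosets $q<\at$, their redundancies, the Demazure operators, the invariant subrings --- is intrinsic to $W_M$, which is exactly the content the paper is taking for granted.
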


\begin{proof} Every atomic coset in $W$ lives within $W_M$ for some finitary $M$ (which lives within a maximal finitary subset), and the same atomic Leibniz rule which works for $W_M$ will work for $W$. \end{proof} 

\begin{ex} \label{ex:atomicforaffine} Suppose one can prove an atomic Leibniz rule for the permutation realization of $\symm_n$ over $\Z$. Then by enlargement, one obtains an atomic Leibniz rule for the affine permutation realization restricted to any finite parabolic subgroup, see Example \ref{ex:restrictingaffineperm}. By the previous lemma, an atomic Leibniz rule holds for the affine permutation realization of the affine Weyl group of type $\widetilde{A}_{n-1}$.  \end{ex}

\section{Lower terms} \label{sec:lowerterms}

In this section we give an explicit description of the ideal of lower terms for an atomic coset using the technology of singular light leaves. 

\subsection{Definition of lower terms}

\begin{defn} Let 
\[ I_{\bullet} = [[I= I_0 \subset K_1 \supset I_1 \subset \ldots K_m \supset I_m = J]]\] be a multistep $(I,J)$ expression. To this expression we associate a \emph{(singular) Bott-Samelson bimodule}
\[ \BS(I_{\bullet}) := R^{I_0} \ot_{R^{K_1}} R^{I_1} \ot_{R^{K_2}} \cdots \ot_{R^{K_m}} R^{I_m}.\]
This is an $(R^I, R^J)$-bimodule.
The collection of all Bott-Samelson bimodules is closed under tensor product, and forms (the set of objects in) a full sub-2-category of the 2-category of bimodules. This sub-2-category is denoted $\SBSBim$.
\end{defn}

For two $(R^I, R^J)$-bimodules $B$ and $B'$, $\Hom(B,B')$ denotes the space of bimodule maps. Moreover, $\Hom(B,B')$ is itself an $(R^I, R^J)$-bimodule in the usual way.

Inside any linear category, given a collection of objects, their identity maps generate a two-sided ideal. This ideal consists of all morphisms which factor through one of those objects, and linear combinations thereof. In the context of $(R^I, R^J)$-bimodules, the actions of $R^I$ and $R^J$ commute with any morphism, and thus preserve the factorization of morphisms. Hence the morphisms within any such ideal form a sub-bimodule of the original Hom space.

\begin{defn}\label{def.lowerterm} Let $p$ be an $(I,J)$-coset. Consider the set of reduced expressions $M_{\bullet}$ for any $(I,J)$-coset $q$ with $q < p$. Let $\Hom_{< p}$ denote the ideal in the category of $(R^I, R^J)$-bimodules %\NL{I would say: in the category of singular Bott-Samelson $(R^I, R^J)$-bimodules}\LP{This definition would include for $B_p$, although we probably just use $\BS$ in this paper} 
generated by the identity maps of $\BS(M_{\bullet})$ for such expressions. Then $\Hom_{<p}$ is a two-sided ideal, the \emph{ideal of lower terms} relative to $p$. The ideal $\Hom_{\leq p}$ is defined similarly. \end{defn}

So $\Hom_{< p}(B,B')$ is a subset of $\Hom(B,B')$, and is a sub-bimodule for $(R^I, R^J)$.
We write $\End_{< p}(B)$ instead of $\Hom_{<p}(B,B) \subset \End(B)$.

We now focus on the case of atomic cosets. We use the letter $\at$ to denote an atomic coset and let $[[I\subset M\supset J]]$ denote the unique reduced expression of $\at$. We let
\begin{equation} B_\at := \BS([[I \subset M \supset J]]) = R^I \ot_{R^M} R^J. \end{equation}

% Note that $[[I \subset M \supset J]]$ is actually a \emph{singlestep expression}, which is a multistep expression for which the subsets differ by at most one simple reflection each time. \HK{why 'at most'? should be exactly one; but is this a good way to introduce singlestep expressions? maybe it is..} \BE{Because multistep expressions alternate between getting bigger and smaller, while singlesteps need not. Is it a good way? Not really...} Following \cite{EKo} we typically denote singlestep expressions using single brackets, like $[I \subset M \supset J]$ or $[I,M,J]$, rather than with doubled brackets. 

Because $B_\at$ is generated by $1 \ot 1$ as a bimodule, any endomorphism is determined by where it sends this element. Thus 
\begin{equation}\label{eq:endatomic} \End(B_\at)\cong R^I \ot_{R^M} R^J 
\end{equation}
as $(R^I, R^J)$-bimodules, via the operations of left and right multiplication. Hence $\End(B_\at) \cong B_\at$ as $(R^I, R^J)$-bimodules\footnote{We have ignored gradings in this paper. Using traditional grading conventions for Bott-Samelson bimodules, $\End(B_\at)$ and $B_\at$ are only isomorphic up to shift. The identity map of $\End(B_\at)$ is in degree zero, while $1 \ot 1 \in B_\at$ is not.}. It is easy to deduce that $B_\at$ is indecomposable (when $\Bbbk$ is a domain) 
since there are no non-trivial idempotents in $\End(B_\at)\cong B_\at$.

\subsection{Atomic double leaves}

The goal of the section is to describe a large family of morphisms in $\End(B_{\at})$ called double leaves, most of which are in $\End_{< \at}(B_{\at})$ by construction. We use the diagrammatic technology originally found in \cite{ESW} and developed further in \cite{KELP4}. 

We assume a Frobenius realization, see Definition \ref{defn:frobrealization}. In particular, the ring inclusions $R^I \subset R^J$ are Frobenius extensions. Under these assumptions, a diagammatic 2-category $\Frob$ is constructed in \cite{ESW}, and it comes equipped with a 2-functor to $\SBSBim$. This 2-functor is essentially surjective, but is not expected to be an equivalence; the category $\Frob$ is missing a number of relations.

Double leaves are to be constructed either as morphisms in $\Frob$, or as their images in $\SBSBim$, depending on the context.

The objects in $\Frob$ are indexed not by multistep expressions but by singlestep expressions. An  \emph{$(I,J)$ singlestep expression} is a sequence
\[ I_{\bullet} = [I = I_0, I_1, \ldots, I_d = J] \]
where each $I_i$ is a finitary subset of $S$, and each $I_i$ and $I_{i+1}$ differ by the addition or removal of a single simple reflection.  
We use single brackets for singlestep expressions, and double brackets for multistep expressions.

Throughout this section, we fix $I\subset M=Is\subset S$ finitary, and let $t = w_M s w_M$ and $J=M\setminus t$, so that 
\[\at\expr[I,M,J]\]
is an atomic coset.
We also fix the $(I,M)$-coset 
\[n=W_IeW_M.\]

\subsubsection{Elementary light leaves for atomic Grassmannian pairs}\label{ss.ELL}

By definition of atomic, $\overline{\at}=w_M$. Then for an $(I,J)$-coset $q$, then condition $q\leq \at$ is equivalent (see \cite[Thm 2.16]{KELP2}) to $\overline{q}\leq w_M$, which in turn is equivalent to $q\subseteq n=W_M.$

For an $(I,J)$-coset $q$ contained in $W_M$, the pair $q\subset n$ is Grassmannian in the sense of \cite[Definition 2.7]{KELP4}. Associated to such a pair, \cite[Section 7.3]{KELP4} constructs a distinguished map called an elementary light leaf. 
The map (and codomain of the map) depends on a choice we make now:
%For each $q\subset n$ 
we fix a reduced expression $X_q$ of the form
\begin{equation}\label{eq.Xq}
X_q = [[I\supset \leftred(q)]]\circ X_q^{\core}\circ[[\rightred(q)\subset J]] 
\end{equation}
where $X_q^{\core}$ is a reduced expression of $q^{\core}$.

\begin{defn}\label{def.ell}
Let $q$ be an $(I,J)$-coset contained in $n$. The \emph{elementary light leaf associated to $[n,q]$ (and $X_q$)} is the $(R^I,R^J)$-bimodule morphism
\[\ELL([n,q]):\BS([I,M,J])\to \BS(X_q)\] sending the generator $1\otimes 1\in R^I\otimes_{R^M} R^J=\BS([I,M,J])$ to the element $1^{\otimes}:= 1\otimes \cdots \otimes 1\in \BS(X_q)$. Equivalently, $\ELL([n,q])$ is defined by the diagrammatic construction in \cite[Section 7.3]{KELP4} (see also \cite[Lemma 8.10]{KELP4}).
\end{defn}

% The map $\ELL([n,q])$ has a diagrammatic construction given in
% \cite[Section 7.3]{KELP4} (see also \cite[Lemma 8.10]{KELP4}).\footnote{To be precise, the map denoted by $\ELL([n,q])$ in \cite{KELP4} is a morphism in the diagrammatic category $\Frob$, and what we want is its image under the evaluation functor to Bott-Samelson bimodules.}
We refer to \cite{KELP4} and \cite{ESW} for a diagrammatic exhibition of morphisms between Bott-Samelson bimodules. 
The morphism $\ELL([n,q])$ is determined by the condition that its diagram consists only of counterclockwise cups and right-facing crossings, as in the following diagram.

\[
{
\labellist
\small\hair 1pt
\pinlabel {$\leftred(q)$} [ ] at 60 50 
\pinlabel {$\rightred(q)$} [ ] at 245 50
\pinlabel {$I$} [ ] at 35 14 
\pinlabel {$M$} [ ] at 150 14
\pinlabel {$J$} [ ] at 280 14 
\endlabellist
\centering
\igb{ELLat}
} 
\]
%\HK{This is a fake example I made before I had real examples. I think it is a reasonable diagram but now I have many examples. In case anyone wants to make diagram for a real example I write a good one at the end of the subsection} \BE{It's good enough for me. I want to swap teal and red given the $s$ and $t$ swap. Please put the svg in the dropbox folder, if you haven't.}

In our examples, we color the simple reflection $\sberry$ in strawberry, and $\teal$ in teal. Sometimes $s = t$, which will force us to change our convention.

In type $A$ the expression $X_q^{\core}$ takes a simple form, and thus $\ELL([n,q])$ could be described more explicitly.

\begin{ex}\label{ex.ellA}
Let $W_M=\symm_{a+b}$ be a symmetric group, for some $a\neq b$, and let $I=\hat{\sberry}$ be such that $W_I=\symm_b\times \symm_a\subset \symm_{a+b}$. Then $J=\hat{\teal}$ is such that $W_J=\symm_a\times \symm_b\subset \symm_{a+b}$. As will be explained in  Section~\ref{s.A} (Equation \eqref{qk}), each $(I,J)$-coset $q$ in $W_M$ has a unique %\NL{we never proved that is unique}\HK{we show in that section that the core is an atom or identity, and we say in many places that an atom has a unique rex. we can refer to \cite[Proposition 5.14]{EKo} in some of these places.}\BE{I'm ok with it as is.} 
reduced expression $X_q$ of the form \eqref{eq.Xq}. 
\begin{enumerate}
\item If $q= W_I e W_J$ then we have
\[q\expr X_q = [[\hat{s}\supset \hat{s}\hat{t}]]\circ [\hat{s}\hat{t}]\circ [[\hat{s}\hat{t}\subset \hat{t}]]=[\hat{s}-\teal+\sberry].\]
Here we have 
\[\ELL([n,q])=\igs{rightcross}.\]
\item If $q = W_I w_M W_J$ then we have
\[q=\at \expr X_q =[I,M,J].\] Here we have $\ELL([n,q])=\id_{\BS([I,M,J])} $.
\item Otherwise, we have 
\[q \expr X_q = [[\hat{s} \supset \hat{s}{\hat{k}\hat{\ell}}]]\circ [\hat{s}{\hat{k}\hat{\ell}}\subset \hat{k}\hat{\ell}\supset \hat{t} \hat{k}\hat{\ell}]\circ[[\hat{t} \hat{k}\hat{\ell} \subset \hat{t}]] \]
for distinct $s,k,\ell,t\in M$. Here we have
\[ \ELL([n,q]) = \igs{qELL1}.\]
\end{enumerate}
\end{ex}

Here is a non-type A example. 

\begin{ex}
Let $(W,S)$ be of type $E_6$ where $S$ is indexed as in the Dynkin diagram 
\begin{tikzpicture}[scale=0.3,baseline=-3]
\protect\draw (8 cm,0) -- (6 cm,0);
\protect\draw (6 cm,0) -- (4 cm,0);
\protect\draw (4 cm,0) -- (2 cm,0);
\protect\draw (2 cm,0) -- (0 cm,0);
\protect\draw (4 cm,0) -- (4 cm,1.5 cm);

\protect\draw[fill=white] (8 cm, 0 cm) circle (.15cm) node[below=1pt]{\scriptsize $6$};
\protect\draw[fill=white] (6 cm, 0 cm) circle (.15cm) node[below=1pt]{\scriptsize $5$};
\protect\draw[fill=white] (4 cm, 0 cm) circle (.15cm) node[below=1pt]{\scriptsize $4$};
\protect\draw[fill=white] (2 cm, 0 cm) circle (.15cm) node[below=1pt]{\scriptsize $3$};
\protect\draw[fill=white] (4 cm, 1.5 cm) circle (.15cm) node[right=1pt]{\scriptsize $2$};
\protect\draw[fill=white] (0 cm, 0 cm) circle (.15cm) node[below=1pt]{\scriptsize $1$};
\end{tikzpicture}.
Let $M=S$ and $s=3$. Then $w_M3w_M=5$ and thus 
\[\at \expr [\hat{3}+3-5] = [\hat{3},M,\hat{5}]\] is an atom. For the $(\hat{3},\hat{5})$-coset $q<\at$ with reduced expression 
\[q\expr X_q= [[\hat{3}\supset \{4,6\}]]\circ [+3-4+5+2+1-6-2-3+4-5] \circ [[\{1,4\}\subset \hat{5}]], \]
the elementary light leaf $\ELL([n,q])$ is
\[ {
\labellist
\small\hair 2pt
\pinlabel {$1$} [ ] at 6 83
\pinlabel {$2$} [ ] at 18 83
\pinlabel {$5$} [ ] at 30 83
\pinlabel {$3$} [ ] at 46 83
\pinlabel {$4$} [ ] at 58 83
\pinlabel {$5$} [ ] at 70 83
\pinlabel {$2$} [ ] at 82 83
\pinlabel {$1$} [ ] at 94 83
\pinlabel {$6$} [ ] at 106 83
\pinlabel {$2$} [ ] at 118 83
\pinlabel {$3$} [ ] at 130 83
\pinlabel {$4$} [ ] at 142 83
\pinlabel {$5$} [ ] at 154 83
\pinlabel {$3$} [ ] at 170 83
\pinlabel {$2$} [ ] at 182 83
\pinlabel {$6$} [ ] at 194 83
\endlabellist
\centering
\ig{1}{qELL2}
} 
\]
% \HK{everything in the middle part $+1+2+5-3-2-6$ commute as long as it is an expression e.g., there are six atomic expression with the commuting atoms [+1-3],[+5-6],[+2-2] (note that 4 is not there)}
\end{ex}

%\subsubsection{Atomic character}
%\subsubsection{Hom formula for atomic Soergel bimodule}

%The graded dimension of Hom space between Soergel bimodules is given by the Soergel-Williamson Hom formula. Here we record it in the atomic case. We take the ungraded dimension for further simplicity.

%\begin{prop}\label{prop.dimEndBat}
%    The dimension of $\End(B_\at)$ is given by
%    \[\sum_{q\leq \at} \dim R^{\rightred(q)}.\]
%\end{prop}
%\begin{proof}
%    By Deodhar's formula (at $v=1$; see \cite[Proposition 3.4.1]{KELP4}) we have
%    \begin{equation}\label{eq.[Bat]}
%    [B_\at] = [\BS(I,M,J)] = \!^Ib^M*_M \!^Mb^J = \!^Ih_n^M*_M \!^Mb^J = \sum_{q\subset n} h_q.    
%    \end{equation}
%    Note that the condition $q\subset n$, for an $(I,J)$-coset $q$, is equivalent to $q\leq \at$.
%     Thus the claim follows from the Soergel-Williamson Hom formula \cite[Theorem 4.16]{KELP4}. 
%\end{proof}

%We also record a relevant specialization of \cite[Proposition 4.21]{KELP4}.
%\begin{prop}\label{prop.Rq}
%    Let $q\leq \at$. % and let $q\expr X_q$ be a reduced expression. 
%    Then 
%    \[\Hom_{\not< q}(B_\at,\BS(X_q)) \cong R_q.\]
%\end{prop}
%\begin{proof}
%This follows from \eqref{eq.[Bat]} and \cite[Proposition 4.21]{KELP4}.
%\end{proof}

\subsubsection{Atomic double leaves}

There is a contravariant (but monoidally-covariant) ``duality'' functor $\calD$ from $\Frob$ to itself defined as follows: \begin{itemize}
\item it preserves objects and $1$-morphisms,
\item on $2$-morphisms, it flips each diagram upside-down and reverses all the orientations. \end{itemize}
This functor is an involution. %We let $\calD\ELL([n,q]):\BS(X_q)\to B_\at$ be the morphism obtained by applying $\calD$ to $\ELL([n,q])$.

% \begin{rem} \BE{do we even keep this remark?}\LP{I vote to remove it} The $2$-functor $\calD$ when restricted to $(R^I, R^J)$-bimodules is isomorphic to the duality functor $\calD' = \Hom_{R^I}(-,R^I)$. To explicitly construct an isomorphism between $\calD$ and $\calD'$ requires choosing compatible isomorphisms $B \cong \calD' B$ for Bott-Samelson bimodules $B$. \end{rem}

\begin{defn}\label{def.DLL}
Given $q\leq \at$ and $b\in R^{\rightred(q)}$, the associated \emph{(right-sprinkled) double leaf} $\DLL_r(q,b)$ is the composition 
\begin{equation}\label{eq.dllq}
B_\at \xrightarrow{\ELL([n,q])}\BS(X_q)\xrightarrow{b} \BS(X_q)\xrightarrow{\calD(\ELL([n,q]))} B_\at.
\end{equation}
The middle map in \eqref{eq.dllq} uses that $X_q$ has the form \eqref{eq.Xq}: the map is multiplication by the element 
\[1^{\otimes}\otimes b\otimes 1\in \BS([[I\supset \leftred(q)]]\circ X_q^{\core})\otimes_{R^{\rightred(q)}} R^{\rightred(q)}\otimes_{R^J}R^J.\]
In diagrams, we have \[
\DLL_r(q,b)={
\labellist
\small\hair 1pt 
\pinlabel {$b$} [ ] at 245 64
\endlabellist
\centering
\igm{DLLat}
} 
\]
\end{defn}

\begin{defn}
Given $q\leq \at$ and $b\in R^{\leftred(q)}$, the associated \emph{left-sprinkled double leaf} $\DLL_l(q,b)$ is the composition 
\begin{equation}\label{eq.dllql}
B_\at \xrightarrow{\ELL([n,q])}\BS(X_q)\xrightarrow{b} \BS(X_q)\xrightarrow{\calD(\ELL([n,q]))} B_\at,
\end{equation}
whose diagram is
\[
{
\labellist
\small\hair 1pt
\pinlabel {$b$} [ ] at 60 64 
\endlabellist
\centering
\igm{DLLat}
} .
\]
\end{defn}

The maps provided here (when $b$ ranges over a basis for $R^{\rightred(q)}$ or $R^{\leftred(q)}$) are the same as the ``double leaves basis'' from \cite{KELP4}. This is verified in the following remark, intended for a reader familiar with \cite{KELP4}.

\begin{rem}\label{rem.kelp4}
Let us verify  that
\begin{equation}\label{eq.DLLpaper4}
\DLL_l(q,b)=\DLL(q,([p,n,q],1),([p,n,q],1),b),
\end{equation} 
where $p$ is the identity $(I,I)$-coset, by following the stages in \cite[Chapter 7]{KELP4}.
First, we construct
$\LL(q,([p,n,q],1))$.
The single step light leaf for the first step $[p,n]$ is the identity map since $[p,n]$ is reduced. 
Since the left redundancy for $p$ and $n$ are the same, %\NL{Is this so? The left redundancy of $q$ is not different in general?}\HK{No, corrected and moved!}\BE{I don't know what it said before, but this looks correct now.} there is no need to sprinkle any polynomials.
In the second step, we have the coset pair $[n,q]$ which is already a Grassmannian pair, thus the single step light leaf is the elementary light leaf  $\ELL([n,q])$. 

Then we choose $X_q=Y_q$ and take the dual map for the upside-down light leaf from $\BS(X_q)$ to $B_\at$. Altogether we obtain the double leaf of the form \eqref{eq.dllql}.  No rex moves were used at any stage of the process.

Moreover, for each $q\leq \at$, there is one subordinate path with terminus $q$, namely $[p,n,q]$. 
It follow that the morphisms $\DLL(q,([p,n,q],1),([p,n,q],1),b)$ form a double leaves basis in the sense of \cite{KELP4}. 
\end{rem}

\begin{rem}\label{rem:rightactiononDLL} We know that $\End(B_{\at})$ is an $(R^I, R^J)$-bimodule, so it is natural to ask how the actions of $R^I$ and $R^J$ interact with the bases presented in Propositions \ref{prop.DLLatom} and \ref{prop.DLLatomleft}. For $g \in R^J$ we claim that
\begin{equation} \label{rightactiononDLLr} \DLL_r(q,b) \cdot g = \DLL_r(q,b \cdot g). \end{equation}
Consider the diagram in Definition \ref{def.DLL}, and right-multiply by $g \in R^J$. Since $g$ is also in $R^{\rightred(q)}$, it can be slid from the right side to the region where $b$ lives.

However, the left action of $f \in R^I$ is more mysterious. For fixed $q$, it does not preserve the span of $\{\DLL_r(q,b) \mid b \in R^{\rightred(q)}\}$. Indeed, the comparison between the left action and the right action is controlled by polynomial forcing for $\BS(X_q)$, which involves lower terms.

Similarly, for $f \in R^I$, the left action on left-sprinkled double leaves is straightforward, 
\[ f \cdot \DLL_l(q,b) = \DLL_l(q,f \cdot b),\]
whereas the right action of $R^J$ is mysterious.
\end{rem}

\subsubsection{Evaluation of double leaves}\label{subsect:eval}

The following crucial computation links double leaves with the description $\End(B_\at) \cong R^I \ot_{R^M} R^J$.
Let $\Delta^J_{M,(1)}$ and $\Delta^J_{M,(2)}$ be dual bases of $R^J$ over $R^M$, where we use Sweedler notation.

\begin{lem}\label{lem.circleev}
The double leaf $\DLL_r(q,b)$  coincides with multiplication  by the element
\begin{equation}\label{eq.withb}
\pa_{\ma{q} w_J^{-1}}\left(b \cdot \Delta^J_{M,(1)} \right) \ot \Delta^J_{M,(2)}.
\end{equation}
This can also be written as
\begin{equation}
\pa_I^{\leftred(q)}\pa_{q^{\core}}(b \cdot \iota_J^{\rightred(q)} \Delta^J_{M,(1)})\ot \Delta^J_{M,(2)}. \end{equation}
\end{lem}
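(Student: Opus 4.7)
Since $\End(B_\at) \cong B_\at$ via $\phi \mapsto \phi(1\ot 1)$, it suffices to show that $\DLL_r(q,b)(1\ot 1)$ equals the claimed element. The first two steps of the composition defining $\DLL_r(q,b)$ are immediate: $\ELL([n,q])$ carries $1\ot 1$ to $1^{\otimes}\in\BS(X_q)$ by definition, and right-multiplication by $b$ places $b$ in the $R^{\rightred(q)}$-region, producing $1^{\otimes}\cdot b\cdot 1^{\otimes}$. The content of the lemma is thus the evaluation of $\calD(\ELL([n,q])):\BS(X_q)\to B_\at$ on this element.

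To compute the dual map, I would use the factorization $X_q=[[I\supset\leftred(q)]]\circ X_q^{\core}\circ[[\rightred(q)\subset J]]$ to split $\calD(\ELL([n,q]))$ into three local pieces. The diagram of $\ELL([n,q])$ displayed after Definition~\ref{def.ell} consists only of counterclockwise cups and right-facing crossings; its dual, obtained by flipping and reversing orientations, consists of clockwise caps and left-facing crossings read top-to-bottom. Interpreting each local piece via the Frobenius relations of $\Frob$ from \cite{ESW,KELP4}, the rightmost cap, dual to the inclusion $R^J\subset R^{\rightred(q)}$ from $[[\rightred(q)\subset J]]$, inserts the Casimir $\sum_i \Delta^J_{M,(1),i}\ot\Delta^J_{M,(2),i}$ of the Frobenius extension $R^M\subset R^J$, placing $\Delta^J_{M,(1),i}$ adjacent to $b$ and leaving $\Delta^J_{M,(2),i}$ in the final $R^J$-slot of $B_\at$. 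The caps corresponding to the core portion then apply $\pa_{q^{\core}}$, and those on the left apply the Frobenius trace $\pa^{\leftred(q)}_I$.

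Combining the three steps yields $\pa^{\leftred(q)}_I\pa_{q^{\core}}\bigl(b\cdot\iota^{\rightred(q)}_J(\Delta^J_{M,(1)})\bigr)\ot\Delta^J_{M,(2)}$, which is the second displayed form of the asserted formula. Its equivalence with the first form is immediate from \eqref{eq:factorpaq} together with the identity that the restriction of $\pa_{\ma{q}w_J^{-1}}$ to $R^{\rightred(q)}$ equals $\pa^{\leftred(q)}_I\circ\pa_{q^{\core}}$, as recalled after \eqref{eq:qj}. The main obstacle is making the second paragraph rigorous, i.e.\ matching the dual diagram $\calD(\ELL([n,q]))$ term-by-term with the prescribed composition of Frobenius traces and Casimir insertions; one must in particular verify that right-multiplication by $b\in R^{\rightred(q)}$ commutes appropriately with the crossings it must slide past, using the invariance of $b$ under $W_{\rightred(q)}$. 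A clean way to handle the bookkeeping is induction on the length of $X_q^{\core}$, checking that each additional cap or crossing contributes its predicted local factor; alternatively, one can invoke the Frobenius 2-category presentation of \cite{ESW} directly, together with the explicit form of elementary light leaves from \cite[\S 7.3]{KELP4}.
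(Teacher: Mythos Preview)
The paper's proof is simply a two-line citation to \cite[Algorithm 8.12 and Lemma 6.10]{KELP4}; your proposal is an unpacking of what that algorithm amounts to in this case, so the underlying approach is the same.

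One caution about your second paragraph: you attribute the insertion of $\Delta^J_M$ to ``the rightmost cap, dual to the inclusion $R^J\subset R^{\rightred(q)}$ from $[[\rightred(q)\subset J]]$''. But $\Delta^J_M$ is the coproduct element for the extension $R^M\subset R^J$, not for $R^J\subset R^{\rightred(q)}$; diagrammatically it enters through the piece of $\calD(\ELL)$ that creates the $M$-labelled region in the target $B_\at=[I,M,J]$, not through the strands coming from $[[\rightred(q)\subset J]]$. A clean sanity check: when $q=\at$ the map $\calD(\ELL)$ is the identity and contains no caps whatsoever, yet the formula still involves $\Delta^J_M$, because it arises from rewriting $1\ot b\in R^I\ot_{R^M}R^J$ in the left-$R^I$-basis $\{1\ot\Delta^J_{M,(2)}\}$ via \eqref{eqforfotone}. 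You already flag that this paragraph needs to be made rigorous and point to the correct tools, so the mislabeling is not fatal to your outline; but it would derail the induction on the length of $X_q^{\core}$ that you propose, since the base case already shows the Casimir does not originate where you place it.
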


\begin{proof}
The proof is immediate from \cite[Algorithm 8.12]{KELP4}. It is proven exactly as \cite[Lemma 6.10]{KELP4}.
\end{proof}

A similar computation involving left-sprinkled double leaves gives the following.
\begin{lem}\label{lem.circleevleft}
The double leaf $\DLL_l(q,b)$  coincides with multiplication  by the element
\begin{equation}\pa_I^{\leftred(q)}(b \cdot \pa_{q^{\core}}(\iota_J^{\rightred(q)} \Delta^J_{M,(1)}))\ot \Delta^J_{M,(2)}.\end{equation}
\end{lem}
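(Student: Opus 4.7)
The plan is to mimic the proof of \Cref{lem.circleev} while keeping careful track of where the sprinkled polynomial $b$ sits within the diagram. By \eqref{eq.dllql}, the map $\DLL_l(q,b)$ sends $1\ot 1 \in B_\at$ first to $1^{\otimes}\in \BS(X_q)$ via $\ELL([n,q])$, then multiplies by $b$ placed in the $R^{\leftred(q)}$-slot (the slot located between $[[I\supset\leftred(q)]]$ and $X_q^{\core}$ in the factorization \eqref{eq.Xq} of $X_q$), and finally maps back to $B_\at$ via $\calD(\ELL([n,q]))$.

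To evaluate this composition, I would apply \cite[Algorithm 8.12]{KELP4} to $\calD(\ELL([n,q]))$ verbatim as in the proof of \cite[Lemma 6.10]{KELP4} used to establish \Cref{lem.circleev}. The algorithm expresses the resulting element of $\End(B_\at)\cong R^I\otimes_{R^M}R^J$ by running Demazure operators along the reversed expression and contracting against the dual bases $\Delta^J_{M,(1)}$, $\Delta^J_{M,(2)}$. Because $\ELL([n,q])$ consists only of counterclockwise cups and right-facing crossings, its dual decomposes into three independent stages corresponding to the three subexpressions $[[I\supset\leftred(q)]]$, $X_q^{\core}$, and $[[\rightred(q)\subset J]]$. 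These stages contribute the operators $\pa^{\leftred(q)}_I$, $\pa_{q^{\core}}$, and $\iota^{\rightred(q)}_J$ respectively, and their composition is exactly the factorization of $\pa_{\ma{q} w_J^{-1}}$ appearing in \eqref{eq:factorpaq}.

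The only difference from the right-sprinkled case is where $b$ intervenes in this sequence. In \Cref{lem.circleev}, $b$ sits in the rightmost (i.e.\ $R^{\rightred(q)}$) slot and is therefore absorbed before $\pa_{q^{\core}}$ acts, yielding $\pa_{q^{\core}}(b\cdot \iota^{\rightred(q)}_J\Delta^J_{M,(1)})$. Here, $b$ sits in the $R^{\leftred(q)}$-slot, hence it is encountered \emph{after} $\pa_{q^{\core}}$ has already acted on $\iota^{\rightred(q)}_J\Delta^J_{M,(1)}$ but \emph{before} $\pa^{\leftred(q)}_I$ is applied. This produces exactly $\pa^{\leftred(q)}_I\bigl(b\cdot \pa_{q^{\core}}(\iota^{\rightred(q)}_J\Delta^J_{M,(1)})\bigr)\ot \Delta^J_{M,(2)}$, as claimed. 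The main (minor) obstacle is purely bookkeeping: verifying that the three-stage structure of $\ELL([n,q])$ really forces the evaluation algorithm to process a polynomial placed in the $R^{\leftred(q)}$-slot at exactly the intermediate stage between the application of $\pa_{q^{\core}}$ and $\pa^{\leftred(q)}_I$. This follows formally from the block decomposition of $\calD(\ELL([n,q]))$ into the three pieces above together with the $R^{\leftred(q)}$-linearity of the left piece.
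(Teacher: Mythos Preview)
Your proposal is correct and follows precisely the approach the paper intends: the paper itself does not give a separate proof but simply says ``A similar computation involving left-sprinkled double leaves gives the following,'' i.e.\ redo the proof of \Cref{lem.circleev} with the polynomial placed in the $R^{\leftred(q)}$-slot instead of the $R^{\rightred(q)}$-slot. Your tracking of where $b$ intervenes in the three-stage factorization is exactly the bookkeeping that distinguishes the two cases.
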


\begin{ex}
If $q\leq \at$ is the minimal $(I,J)$-coset, namely $q= W_IeW_J$, then we have two cases.
\begin{enumerate}
\item When $\textcolor{teal}{t}:=w_M \sberry w_M\neq \sberry$, we have \[q\expr [I,K,J]=[I - \textcolor{teal}{t}+\sberry]\] for $K=I\cap J$.
In this case, both left and right redundancies are $K$, and for $b\in R^K$ the double leaf $\DLL_r(q,b)=\DLL_l(q,b)$ is the left diagram in the equality
\begin{equation} \label{R2hard} 
	{
		\labellist
		\small\hair 2pt
		\pinlabel {$b$} [ ] at 31 32
		\endlabellist
		\centering
		\igm{R2hard1_colorswap}
	} =  \pa^K_I(b\cdot\Delta^J_{M,(1)})\igm{ups} \igm{downt}\Delta^J_{M,(2)} . \end{equation}
Thus we have
\[
\DLL_r(q,b)=\DLL_l(q,b)=
\pa^K_I(b\cdot\Delta^J_{M,(1)})\ot \Delta^J_{M,(2)}\]
(see \cite[Equations (89) and (91)]{KELP4}). 
\item When $w_M \sberry w_M=\sberry$, we have $q\expr [I]$. In this case, we have $\leftred(q)=I=\rightred(q)$, and for $b\in R^I$ the double leaf has a capcup diagram 
\begin{equation} \label{cupcap} {
		\labellist
		\small\hair 2pt
		\pinlabel {$b$} [ ] at 15 32
		\endlabellist
		\centering
		\igm{cupcap}
	}.  \end{equation}
Thus we have $\DLL_r(q,b)=\DLL_l(q,b) = b\cdot \Delta_M^I= \Delta_M^I\cdot b$.
\end{enumerate}
\end{ex}

\begin{defn} \label{defn:PFfirstdef}
For an atomic coset $\at\expr [I,M,J]$, let $\PF_{<\at}$ denote the $\Bbbk$-linear subspace of $\End(B_{\at})$ spanned by right-sprinkled double leaves factoring through $q < \at$. More explicitly we have
\[\PF_{<\at}:= \Span_{q < \at} \{ \DLL_r(q,b) \mid b \in R^{\rightred(q)} \} = \Span_{q < \at} \{ \pa_{\ma{q}w_J^{-1}}
(R^{\rightred(q)}\cdot \Delta_{M,(1)}^J)\ot \Delta_{M,(2)}^J \}. \]
\end{defn}

\begin{lem} \label{lem:easycontain} We have $\PF_{<\at} \subset \End_{< \at}(B_{\at})$. \end{lem}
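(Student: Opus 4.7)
The plan is to unwrap the definitions. By Definition \ref{defn:PFfirstdef}, $\PF_{<\at}$ is the $\Bbbk$-linear span of the right-sprinkled double leaves $\DLL_r(q,b)$ with $q<\at$ and $b\in R^{\rightred(q)}$. Since $\End_{<\at}(B_\at)$ is a $\Bbbk$-linear subspace of $\End(B_\at)$ (indeed, a sub-bimodule), it suffices to show that each generator $\DLL_r(q,b)$ lies in $\End_{<\at}(B_\at)$.

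For this, I would invoke Definition \ref{def.DLL}, which exhibits $\DLL_r(q,b)$ as the composition
\[
B_\at \xrightarrow{\ELL([n,q])} \BS(X_q) \xrightarrow{\,\cdot\, b\,} \BS(X_q) \xrightarrow{\calD(\ELL([n,q]))} B_\at,
\]
so in particular $\DLL_r(q,b)$ factors through $\BS(X_q)$. By the choice \eqref{eq.Xq}, $X_q$ is a (multistep) reduced expression of the coset $q$, and by hypothesis $q<\at$. Hence $\BS(X_q)$ is one of the Bott--Samelson bimodules whose identity map generates the ideal $\Hom_{<\at}$ of Definition \ref{def.lowerterm}, and so $\DLL_r(q,b)\in \End_{<\at}(B_\at)$.

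Taking $\Bbbk$-linear combinations yields $\PF_{<\at}\subset \End_{<\at}(B_\at)$, as desired. There is no serious obstacle here; the statement is essentially a bookkeeping consequence of the construction of $\DLL_r$, and the only thing to verify is that the middle ``multiplication by $b$'' map is a genuine $(R^I,R^J)$-bimodule endomorphism of $\BS(X_q)$ (so that the composition really lies in the morphism space under consideration), which is automatic since $b$ is multiplied in the middle tensor slot indexed by $R^{\rightred(q)}$, commuting with both the left $R^I$-action and the right $R^J$-action. The converse containment (that every element of $\End_{<\at}(B_\at)$ is already captured by $\PF_{<\at}$, i.e.\ the identification of the ideal of lower terms with the span of double leaves) is the deeper statement and is deferred to later results in the paper.
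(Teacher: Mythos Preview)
Your proof is correct and follows exactly the same approach as the paper's own proof: both observe that each $\DLL_r(q,b)$ with $q<\at$ factors through $\BS(X_q)$ for a reduced expression $X_q$ of $q$, and hence lies in the ideal $\End_{<\at}(B_\at)$ by definition. The paper's proof is simply the one-sentence version of what you wrote.
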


\begin{proof} By construction, every double leaf associated to $q < \at$ factors through a reduced expression for $q$, and thus lives in $\End_{< \at}(B_{\at})$. \end{proof}

We also note a consequence of Remark \ref{rem:rightactiononDLL}.

\begin{cor} \label{cor:rightaction} For each $g \in R^J$ and $b \in R^{\rightred(q)}$ we have
\begin{equation} \pa_{\ma{q}w_J^{-1}}
(b \cdot \Delta_{M,(1)}^J)\ot \Delta_{M,(2)}^J \cdot g = \pa_{\ma{q}w_J^{-1}}
(b \cdot g \cdot \Delta_{M,(1)}^J)\ot \Delta_{M,(2)}^J. \end{equation}
In particular, $\PF_{<\at}$ is a right $R^J$-module.
\end{cor}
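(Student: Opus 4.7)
The plan is to deduce the identity directly from Lemma~\ref{lem.circleev} together with the diagrammatic sliding computation recorded in Remark~\ref{rem:rightactiononDLL}.

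First I would recall that the double leaf $\DLL_r(q,b)$, regarded as an endomorphism of $B_\at$, is multiplication by the element
\[ \pa_{\ma{q}w_J^{-1}}(b \cdot \Delta^J_{M,(1)}) \ot \Delta^J_{M,(2)} \in R^I \ot_{R^M} R^J, \]
via the identification $\End(B_\at) \cong R^I \ot_{R^M} R^J$ of \eqref{eq:endatomic}. Under this identification, right multiplication of an endomorphism by $g \in R^J$ corresponds to right multiplication of the corresponding element of $B_\at$ by $g$. So the content of the claim is precisely the identity $\DLL_r(q,b) \cdot g = \DLL_r(q, b\cdot g)$.

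Next, I would justify the sliding argument from Remark~\ref{rem:rightactiononDLL}. Consider the diagram defining $\DLL_r(q,b)$: after the initial elementary light leaf, the rightmost region of the diagram is labelled by $\rightred(q)$. Since $g \in R^J \subset R^{\rightred(q)}$, the box labelled $g$ placed on the far right of the diagram may be slid freely across the region of color $\rightred(q)$ into the middle region of color $\rightred(q)$ where the box labelled $b$ sits. This combines the two boxes into a single box $b \cdot g \in R^{\rightred(q)}$, which by definition gives $\DLL_r(q, b\cdot g)$. Applying Lemma~\ref{lem.circleev} once more yields the displayed equation.

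The ``in particular'' statement is then immediate: $\PF_{<\at}$ is by definition the $\Bbbk$-span of the elements $\DLL_r(q,b)$ as $q<\at$ and $b\in R^{\rightred(q)}$ vary, and the identity we just proved shows that right multiplication by any $g\in R^J$ carries each generator $\DLL_r(q,b)$ to another generator $\DLL_r(q, b\cdot g)$ of the same form. The only genuine point to verify is the sliding step, which rests on the fact that the $\rightred(q)$-invariance of $g$ follows from the inclusion $R^J \subset R^{\rightred(q)}$; once that is noted, the proof is a direct application of the Frobenius/polynomial sliding relations of $\Frob$.
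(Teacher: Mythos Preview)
Your proof is correct and follows exactly the same approach as the paper: the paper's one-line proof simply cites \eqref{rightactiononDLLr} and Lemma~\ref{lem.circleev}, and you have spelled out precisely this argument in full detail, including the sliding justification that the paper records in Remark~\ref{rem:rightactiononDLL}.
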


\begin{proof} This follows from \eqref{rightactiononDLLr} and Lemma \ref{lem.circleev}. \end{proof}

\subsection{Double leaves and lower terms: part I}

The main result of \cite{KELP4} is that double leaves form a basis for morphisms between Bott-Samelson bimodules. However, \cite{KELP4} relies on Williamson's theory of standard filtrations, which relies on several assumptions originally made by Soergel.

\begin{defn} We call a realization a \emph{Soergel-Williamson realization} or an \emph{SW-realization} for short, if it is a Frobenius realization (see Definition \ref{defn:frobrealization}), and it also satisfies the following assumptions. \begin{itemize}
\item The realization is  \textit{reflection faithful}, i.e. it is faithful, and the reflections in
$W$ are exactly those elements that fix a codimension-one subspace.
\item The ring $\Bbbk$ is an infinite field of characteristic not equal to $2$. \end{itemize}
\end{defn}

\begin{rem}
Abe \cite{AbeSing} has recently developed a theory of singular Soergel bimodules that works for Frobenius realizations, without the extra restrictions of an SW-realization. One expects that the results of \cite{KELP4} can be straightforwardly generalized to Abe's setting. For simplicity and because of the current state of the literature, we will work with Williamson's  category of bimodules.
\end{rem}

\begin{prop}\label{prop.DLLatom}
Assume an SW-realization. Let $\mathbb B_q$ be a $\Bbbk$-basis of $R^{\rightred(q)}$, for each $q\leq \at$.
Then 
\begin{equation}\label{eq.dllb}
\{\DLL_r(q,b)\}_{b\in \mathbb B_q}
\end{equation}
gives a basis of $\End_{\leq q}(B_{\at})/\End_{<q}(B_{\at})$ over $\Bbbk$.
In particular, 
\begin{equation}\label{eq.dllat}
\{\DLL_r(q,b)\ |\ q\le \at, b\in \mathbb B_q\}
\end{equation}
is a $\Bbbk$-basis of $\End(B_{\at})$, and the subset indexed by $q < \at$ is a basis for $\End_{< \at}(B_{\at})$.
\end{prop}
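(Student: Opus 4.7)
The plan is to deduce this proposition as a direct unpacking of the double leaves basis theorem from \cite{KELP4}, which is precisely what requires the SW-assumptions. Almost all of the work has already been done in Remark \ref{rem.kelp4}, and what remains is to sort out the indexing and the filtration compatibility.

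First, I would invoke the main theorem of \cite{KELP4} for the singlestep expression $[I,M,J]$, which under the SW-assumption gives a $\Bbbk$-basis of $\End(\BS([I,M,J])) = \End(B_\at)$ indexed by subordinate paths together with basis elements of the appropriate invariant polynomial rings. Then I would read off Remark \ref{rem.kelp4}: for the atomic expression $[I,M,J]$, the only subordinate paths are $[p,n,q]$ for $q\leq \at$, and for each such path the relevant invariant ring is $R^{\rightred(q)}$. Remark \ref{rem.kelp4} matches the $\DLL_l(q,b)$ with the double leaves produced by the algorithm of \cite{KELP4}, and no rex moves appear in the process. The analogous identification holds for $\DLL_r(q,b)$: one runs the same algorithm with the sprinkling placed on the right of $X_q^{\core}$ rather than on the left. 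Thus, as $q$ ranges over $(I,J)$-cosets with $q\leq \at$ and $b$ ranges over $\mathbb{B}_q$, the family $\{\DLL_r(q,b)\}$ is a $\Bbbk$-basis of $\End(B_\at)$.

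Next I would verify filtration compatibility. By Definition \ref{def.DLL}, each $\DLL_r(q,b)$ factors through $\BS(X_q)$, where $X_q$ is a reduced expression for $q$. Since $q\leq \at$, this shows $\DLL_r(q,b)\in \End_{\leq q}(B_\at)$; moreover, the double leaves indexed by $q'<q$ all factor through expressions for strictly smaller cosets, so they lie in $\End_{<q}(B_\at)$. The reverse inclusion is the non-trivial content of the \cite{KELP4} theorem applied to the sub-Bott-Samelson bimodules: the double leaves indexed by $q'\leq q$ (respectively $q'<q$) $\Bbbk$-span $\End_{\leq q}(B_\at)$ (respectively $\End_{<q}(B_\at)$). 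Combining spanning with the global linear independence established above, $\{\DLL_r(q',b)\}_{q'\leq q,\, b\in \mathbb B_{q'}}$ is a basis of $\End_{\leq q}(B_\at)$, and similarly for $\End_{<q}$. The quotient basis of $\End_{\leq q}/\End_{<q}$ by the classes of $\{\DLL_r(q,b)\}_{b\in\mathbb B_q}$ follows immediately, as does the final assertion about $\End_{<\at}(B_\at)$ by taking $q$ to be the unique $(I,J)$-coset just below $\at$ in a sufficiently refined filtration (or by summing over all $q<\at$).

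The main potential obstacle is the mismatch of conventions between \cite{KELP4}, which uses left-sprinkling, and the right-sprinkled version $\DLL_r$ that appears in the proposition. Handling this carefully would mean either (i) rerunning the verification of Remark \ref{rem.kelp4} with the sprinkling symbol placed to the right of $X_q^{\core}$, observing that the algorithm of \cite[Chapter 7]{KELP4} is neutral to this choice because no rex moves appear along the elementary light leaf for a Grassmannian pair, or (ii) appealing to an evident right/left symmetry of the construction. Beyond this bookkeeping, the proposition is essentially a corollary of \cite{KELP4}, and the proof reduces to one line once the preceding identifications are in place.
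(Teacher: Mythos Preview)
Your proposal is correct and takes essentially the same approach as the paper: the paper's proof is a one-line citation of \cite[Lemma 8.35 and Theorem 7.49]{KELP4} together with Remark~\ref{rem.kelp4}, and you have simply unpacked what that citation entails. Your explicit discussion of the left/right sprinkling discrepancy is more careful than the paper, which silently relies on the cited results in \cite{KELP4} covering the right-sprinkled case directly.
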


\begin{proof}
% \BE{We should just cite KELP4 rather than saying the proof is the same as in KELP4. I changed this proof. What is the subtlety I missed?}\HK{THere is no subtlety here. Because we define DLL newly here rather than citing KELP4, and it is only in Remark~\ref{rem.kelp4} that we check this DLL is that DLL, I did not want to refer directly to the lemma and thm from kelp4.} \BE{After the reorg, are you ok with just citing?}\HK{I still prefer to say it is the same proof but just citing is okay if we also refer to Remark~\ref{rem.kelp4}.}
This is a special case of  \cite[Lemma 8.35 and Theorem 7.49]{KELP4}, as confirmed in Remark~\ref{rem.kelp4}. \end{proof}

% The proof is identical (that is, it is not simpler in our special case) to that in \cite[Lemma 8.35 and Proof of Theorem 7.49]{KELP4}, using Propositions~\ref{prop.RqELL},~\ref{prop.RqDELL} together with the Soergel--Williamson Hom formula \cite[Theorem 4.16]{KELP4} applied to \eqref{eq.[Bat]}.    
% \end{proof}

\begin{cor} \label{cor:pfisltoverSW} Assume an SW-realization. We have 
\begin{equation}  \label{lowertermsummary} \PF_{<\at} = \End_{<\at}(B_{\at}) =
\bigoplus_{q<\at} \pa_{\ma{q}w_J^{-1}}
(R^{\rightred(q)}\cdot \Delta_{M,(1)}^J)\ot \Delta_{M,(2)}^J.\end{equation}
In particular, $\PF_{<\at}$ is an $(R^I, R^J)$-bimodule.
\end{cor}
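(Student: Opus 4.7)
The corollary is essentially a repackaging of Proposition~\ref{prop.DLLatom} together with the evaluation computation in Lemma~\ref{lem.circleev} and the containment in Lemma~\ref{lem:easycontain}, so my approach is to assemble these three ingredients and then extract the bimodule statement as a formal consequence.

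\textbf{Step 1 (the equality $\PF_{<\at}=\End_{<\at}(B_\at)$).} One inclusion is Lemma~\ref{lem:easycontain}. For the other, apply Proposition~\ref{prop.DLLatom} to obtain a $\Bbbk$-basis of $\End_{<\at}(B_\at)$ given by $\{\DLL_r(q,b)\mid q<\at,\, b\in\mathbb{B}_q\}$, where $\mathbb{B}_q$ is a $\Bbbk$-basis of $R^{\rightred(q)}$. Each of these basis elements lies in $\PF_{<\at}$ by Definition~\ref{defn:PFfirstdef}, so $\End_{<\at}(B_\at)\subseteq \PF_{<\at}$.

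\textbf{Step 2 (the direct sum decomposition).} Translate everything through the identification $\End(B_\at)\cong R^I\ot_{R^M}R^J$ from \eqref{eq:endatomic} and Lemma~\ref{lem.circleev}, which identifies $\DLL_r(q,b)$ with the element $\pa_{\ma{q}w_J^{-1}}(b\cdot \Delta^J_{M,(1)})\ot \Delta^J_{M,(2)}$. By $\Bbbk$-linearity in $b$, the $\Bbbk$-span of $\DLL_r(q,b)$ as $b$ runs over $R^{\rightred(q)}$ equals the $\Bbbk$-span as $b$ runs over $\mathbb{B}_q$, namely $\pa_{\ma{q}w_J^{-1}}(R^{\rightred(q)}\cdot \Delta^J_{M,(1)})\ot \Delta^J_{M,(2)}$. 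Proposition~\ref{prop.DLLatom} then asserts that the union $\bigcup_{q<\at}\{\DLL_r(q,b)\mid b\in\mathbb{B}_q\}$ is a $\Bbbk$-basis of $\End_{<\at}(B_\at)$; linear independence across different $q$ upgrades the sum into a direct sum, yielding \eqref{lowertermsummary}.

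\textbf{Step 3 (bimodule structure).} The right $R^J$-action preserves $\PF_{<\at}$ by Corollary~\ref{cor:rightaction}. The left $R^I$-action is not visibly controlled on individual summands (cf.\ Remark~\ref{rem:rightactiononDLL}), but once we know $\PF_{<\at}=\End_{<\at}(B_\at)$ this is automatic: $\End_{<\at}(B_\at)$ is by construction an ideal in the category of $(R^I,R^J)$-bimodules, hence a sub-bimodule of $\End(B_\at)$ under both left and right multiplication.

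\textbf{Main obstacle.} There is essentially no obstacle here: the deep input is Proposition~\ref{prop.DLLatom}, which in turn rests on the basis theorem of \cite{KELP4} and thus on Williamson's standard filtration theory (the very reason the SW-realization hypothesis is needed). Once that is granted, the only subtlety to watch is that the individual $q$-summands in \eqref{lowertermsummary} are not stable under the left $R^I$-action, so the direct sum decomposition is genuinely a $\Bbbk$-linear one, and the bimodule structure must be deduced only after the equality with $\End_{<\at}(B_\at)$ is established.
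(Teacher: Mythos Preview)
Your proposal is correct and follows essentially the same approach as the paper: both argue the first equality via Proposition~\ref{prop.DLLatom} (with Lemma~\ref{lem:easycontain} for one containment), deduce the direct sum from the linear independence in that proposition together with Lemma~\ref{lem.circleev}, and obtain the bimodule statement from the identification with $\End_{<\at}(B_\at)$. Your write-up is simply more explicit than the paper's three-sentence version.
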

%= \bigoplus_{q<\at} \pa_q(R_q\cdot \Delta_{M,(1)}^J)\ot \Delta_{M,(2)}^J
%\bigoplus_{q<\at} \DLL_r(q,R^{\rightred(q)})=
%\bigoplus_{q<\at} \pa_I^{\leftred(q)}\pa_{q^{\core}}(R^{\rightred(q)}\cdot \iota_J^{\rightred(q)}\Delta_{M,(1)}^J)\ot \Delta_{M,(2)}^J

\begin{proof} As $\PF_{<\at}$ is the span of double leaves factoring through $q < \at$, the first equality follows from Proposition \ref{prop.DLLatom}. The second equality follows from the linear independence of double leaves. Since $\End_{<\at}(B_{\at})$ is an $(R^I, R^J)$-bimodule, so is $\PF_{<\at}$. \end{proof}

Similarly, double leaves provides a left-sprinkled basis.

\begin{prop}\label{prop.DLLatomleft}
Assume an SW-realization. Let $\mathbb B_q$ be a $\Bbbk$-basis of $R^{\leftred(q)}$, for each $q\leq \at$.
Then 
\begin{equation}\label{eq.dllbl}
\{\DLL_l(q,b)\}_{b\in \mathbb B_q}
\end{equation}
gives a basis of $\End_{\leq q}(B_{\at})/\End_{<q}(B_{\at})$ over $\Bbbk$.
In particular, 
\begin{equation}\label{eq.dllatl}
\{\DLL_l(q,b)\ |\ q\leq \at, b\in \mathbb B_q\}
\end{equation}
is a $\Bbbk$-basis of $\End(B_{\at})$, and the subset indexed by $q < \at$ is a basis for $\End_{< \at}(B_{\at})$.
\end{prop}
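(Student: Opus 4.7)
The plan is to follow the proof of Proposition \ref{prop.DLLatom} with suitable modifications, exploiting the fact that the left-sprinkled double leaves are produced by the same general construction of \cite{KELP4} with only a change of where the sprinkle is placed. The diagrammatic picture is identical to that of the right-sprinkled double leaf, except that the polynomial $b$ lives to the left of the core portion $X_q^{\core}$ rather than to the right, and one uses the freeness of the Frobenius extension $R^I \subset R^{\leftred(q)}$ in place of $R^J \subset R^{\rightred(q)}$.

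More concretely, I would first check that Remark \ref{rem.kelp4}, which identified $\DLL_r(q,b)$ with the output of the \cite{KELP4} algorithm via the subordinate path $[p,n,q]$, goes through verbatim for $\DLL_l(q,b)$: the elementary light leaf $\ELL([n,q])$ depends only on the pair $(n,q)$ and on the reduced expression $X_q = [[I\supset \leftred(q)]] \circ X_q^{\core} \circ [[\rightred(q) \subset J]]$, not on where subsequent polynomial insertions are placed, so the construction of a double leaves basis in \cite[Lemma 8.35 and Theorem 7.49]{KELP4} applies with the sprinkle convention on either side of $X_q^{\core}$. Second, I would use Lemma \ref{lem.circleevleft} to write down the image of $\DLL_l(q,b)$ in $\End(B_\at) \cong R^I \otimes_{R^M} R^J$ as
\[ \pa_I^{\leftred(q)}(b \cdot \pa_{q^{\core}}(\iota_J^{\rightred(q)}\Delta^J_{M,(1)})) \otimes \Delta^J_{M,(2)}, \]
and combine this with the left $R^I$-linearity recorded in Remark \ref{rem:rightactiononDLL} (namely $f \cdot \DLL_l(q,b) = \DLL_l(q, f \cdot b)$) to reduce to showing that the associated assignment $b \mapsto \DLL_l(q,b) \bmod \End_{<q}(B_\at)$ is an injective $R^M$-linear map from $R^{\leftred(q)}$ whose image spans the subquotient.

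The spanning statement is essentially a rank count: by Proposition \ref{prop.DLLatom}, $\End_{\leq q}(B_\at)/\End_{<q}(B_\at)$ is a free $\Bbbk$-module whose rank (as an $R^M$-module before taking the $\Bbbk$-basis $\mathbb B_q$) equals $\mathrm{rk}_{R^M} R^{\rightred(q)}$; the Frobenius extensions $R^M \subset R^{\leftred(q)}$ and $R^M \subset R^{\rightred(q)}$ share the same rank, since both $\leftred(q)$ and $\rightred(q)$ are conjugate via $\underline{q}$ to one another inside $M$. Injectivity modulo lower terms follows by choosing $c := \pa_{q^{\core}}(\iota_J^{\rightred(q)} \Delta^J_{M,(1)})$ and using that $b \mapsto \pa_I^{\leftred(q)}(b \cdot c) \otimes \Delta^J_{M,(2)}$ is, modulo lower terms, precisely the evaluation pairing for the Frobenius extension $R^I \subset R^{\leftred(q)}$ relative to the distinguished ``leading'' contribution coming from $q$; combined with the non-degeneracy of this Frobenius pairing, distinct basis elements $b \in \mathbb B_q$ yield distinct classes modulo $\End_{<q}(B_\at)$.

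The main obstacle is matching the left-sprinkled evaluation against the right-sprinkled one at the level of subquotients: the two evaluation formulas differ in how $b$ interacts with $\pa_{q^{\core}}$ (inside vs.\ outside), so one needs to argue that this rearrangement can be absorbed into a change of basis via the Frobenius structure without introducing extra terms of order $< q$. In practice this is the standard ``leading term'' argument that underlies \cite[Theorem 7.49]{KELP4}, just applied on the opposite side. Once this is secured, the second assertion (that $\{\DLL_l(q,b) : q \leq \at, b \in \mathbb B_q\}$ is a basis of $\End(B_\at)$, with lower truncation a basis of $\End_{<\at}(B_\at)$) is a formal consequence of the subquotient basis, exactly as in the right-sprinkled case.
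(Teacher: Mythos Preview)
Your approach is correct in spirit but overcomplicated, and rests on a small misreading. You write that Remark~\ref{rem.kelp4} ``identified $\DLL_r(q,b)$ with the output of the \cite{KELP4} algorithm,'' but in fact that remark establishes
\[
\DLL_l(q,b)=\DLL(q,([p,n,q],1),([p,n,q],1),b),
\]
i.e.\ it is the \emph{left}-sprinkled double leaf that is literally identified with the \cite{KELP4} construction. Consequently Proposition~\ref{prop.DLLatomleft} follows from \cite[Lemma~8.35 and Theorem~7.49]{KELP4} even more directly than Proposition~\ref{prop.DLLatom} does, and the paper accordingly states it without proof. Your rank-count comparison between $R^{\leftred(q)}$ and $R^{\rightred(q)}$, and the Frobenius-pairing injectivity argument, are therefore unnecessary: once Remark~\ref{rem.kelp4} is read correctly, the one-line citation that proves Proposition~\ref{prop.DLLatom} applies verbatim here.

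That said, the extra material you supply is not wrong, and your acknowledgment that the ``leading term'' argument of \cite[Theorem~7.49]{KELP4} is what ultimately handles the left-versus-right rearrangement is accurate. It simply duplicates work already packaged in \cite{KELP4}.
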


\subsection{Double leaves and lower terms: part II} \label{ssec:localizationtricks}

\begin{defn} \label{defn:almostSW} An \emph{almost-SW realization} is a Frobenius realization, together with the following assumptions. \begin{itemize}
\item The ring $\Bbbk$ is a domain with fraction field $\ffk$.
\item After base change to $\ffk$, the result is an SW-realization.
\item  Finitely-generated projective modules over $\Bbbk$ are free.
\end{itemize}
\end{defn}

\begin{ex}  The defining representation of $\symm_n$ over $\Z$ is an almost-SW realization \cite[Lemma 5]{Demazure}. \end{ex}

\begin{ex} The root realization of a Weyl group is almost-SW when defined over $\Bbbk= \Z[1/N]$ for small $N$ ($N=30$ will suffice for all Weyl groups by \cite[Proposition 8]{Demazure}). \end{ex}

\begin{lem}\label{lem:RIisfree}  Let $I \subset S$ be finitary. Then $R^I$ is a free $\Bbbk$-module. \end{lem}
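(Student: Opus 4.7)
The plan is to use the graded structure of $R$ together with generalized Demazure surjectivity to exhibit each graded piece of $R^I$ as a direct summand of a finite rank free $\Bbbk$-module, and then invoke the almost-SW hypothesis that finitely generated projective $\Bbbk$-modules are free.

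First I would recall that since $V$ is free of finite rank over $\Bbbk$, the polynomial ring $R = \Sym(V)$ is positively graded with each piece $R_n$ a finite rank free $\Bbbk$-module, and $R^I = \bigoplus_n (R^I)_n$ is a graded $\Bbbk$-submodule. It therefore suffices to show each graded piece $(R^I)_n$ is a free $\Bbbk$-module. Next, generalized Demazure surjectivity (part of the Frobenius realization hypothesis) produces a homogeneous element $c \in R$ of degree $d = 2\ell(w_I)$ with $\pa_{w_I}(c) = 1$. Because $\pa_{w_I} \co R \to R^I$ is $R^I$-linear, one has $\pa_{w_I}(fc) = f \cdot \pa_{w_I}(c) = f$ for every $f \in R^I$. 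Hence the $\Bbbk$-linear maps
\[ \sigma_n \co (R^I)_n \to R_{n+d}, \ f \mapsto fc, \qquad \pi_{n+d} := \pa_{w_I}|_{R_{n+d}} \co R_{n+d} \to (R^I)_n, \]
satisfy $\pi_{n+d} \circ \sigma_n = \id$, exhibiting $(R^I)_n$ as a $\Bbbk$-linear direct summand of $R_{n+d}$. Thus $(R^I)_n$ is a finitely generated projective $\Bbbk$-module, and by the last clause of Definition \ref{defn:almostSW} it is free. Taking the direct sum over $n$ shows $R^I$ is free.

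The only real subtlety I see is that one cannot appeal to the averaging idempotent $\frac{1}{|W_I|}\sum_{w \in W_I} w$, since $|W_I|$ need not be invertible in $\Bbbk$; this is why the argument leans on the Frobenius splitting provided by $c$ rather than on group-averaging. The hypothesis that finitely generated projective $\Bbbk$-modules are free is precisely what is needed to pass from ``direct summand of a free module'' (which is automatic) to ``free''; over a general domain this step can fail (e.g.\ over a Dedekind domain that is not a PID), so the domain-plus-projectives-are-free assumption in the definition of almost-SW realization is used in an essential way.
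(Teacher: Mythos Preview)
Your proof is correct and follows essentially the same approach as the paper: both use generalized Demazure surjectivity to split $R^I$ off as a direct summand of $R$ (the paper phrases this as ``$R$ is free over $R^I$, hence $R^I$ is a summand''), then pass to graded pieces to obtain finitely generated projective $\Bbbk$-modules, and finally invoke the almost-SW assumption that such modules are free. Your version is slightly more explicit about the splitting maps $\sigma_n$ and $\pi_{n+d}$, but the underlying argument is identical.
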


\begin{proof} As a polynomial ring over a free $\Bbbk$-module $V$, $R$ is a free $\Bbbk$-module. By the assumption of generalized Demazure surjectivity, $R$ is free as an $R^I$-module when $I \subset S$ is finitary. Thus $R^I$ is a direct summand of $R$, and is therefore projective as a $\Bbbk$-module. Since both $R$ and $R^I$ are finitely-generated as $\Bbbk$-modules in each graded degree, we deduce that $R^I$ is also a free module over $\Bbbk$. \end{proof}

% \begin{ex} Let $(W,S)$ be crystallographic, and consider the realization over $\Z$ associated to the usual reflection representation with its integral Cartan matrix. This is an almost-SW realization. \LP{Do we really have demazure surjectivity in this generality?} \BE{no, definitely not!} \end{ex}

Our goal in this section is to generalize the results of the previous section to almost-SW realizations. In all the lemmas in this section, we assume an almost-SW realization. First we note the compatibility of base change with most of the constructions above.

We let $R$ be the polynomial ring of the realization over $\Bbbk$, and let $R_\ffk :=  R\otimes_{\Bbbk}\ffk$ be the polynomial ring of the realization after base change. Let $R^I_\ffk \subset R_{\ffk}$ be the invariant subring.

\begin{lem} We have $R^I_{\ffk} \cong R^I \ot_{\Bbbk} \ffk$. \end{lem}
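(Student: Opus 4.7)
The plan is to exploit flat base change, using the fact that $\ffk$ is a localization of the domain $\Bbbk$.

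First I would express $R^I$ as the kernel of a $\Bbbk$-linear map between $\Bbbk$-modules. By definition, $R^I = \{f \in R \mid w(f) = f \text{ for all } w \in W_I\}$, and since $W_I$ is finite (because $I$ is finitary), this is the same as the kernel of the $\Bbbk$-linear map
\begin{equation*}
\Phi \co R \to \bigoplus_{w \in W_I} R, \qquad f \mapsto \bigl(f - w(f)\bigr)_{w \in W_I}.
\end{equation*}
In other words, we have an exact sequence $0 \to R^I \to R \xrightarrow{\Phi} \bigoplus_{w \in W_I} R$ of $\Bbbk$-modules.

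Next I would tensor over $\Bbbk$ with $\ffk$. Since $\ffk$ is a localization of $\Bbbk$, it is flat over $\Bbbk$, so $- \ot_\Bbbk \ffk$ preserves kernels and commutes with the finite direct sum. The resulting exact sequence reads
\begin{equation*}
0 \to R^I \ot_\Bbbk \ffk \to R_\ffk \xrightarrow{\Phi \ot \ffk} \bigoplus_{w \in W_I} R_\ffk.
\end{equation*}
The action of $W_I$ on $R_\ffk = R \ot_\Bbbk \ffk$ is given by $w \ot \id$, so $\Phi \ot \ffk$ is exactly the map whose kernel computes $R^I_\ffk$. Comparing kernels gives the desired isomorphism $R^I \ot_\Bbbk \ffk \cong R^I_\ffk$.

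There is no real obstacle here; the only point worth being careful about is the identification of $R^I$ as a kernel (not just an intersection of subobjects), which is what makes flat base change directly applicable. One could alternatively use Lemma \ref{lem:RIisfree} to see that the natural map $R^I \ot_\Bbbk \ffk \to R_\ffk$ is injective (the tensor of a free module sits inside $R_\ffk$), and then argue surjectivity by clearing denominators on any $W_I$-invariant element of $R_\ffk$; but the flat base change argument is both shorter and cleaner.
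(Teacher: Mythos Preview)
Your proof is correct, and it takes a different route from the paper's. The paper argues directly on the natural map $R^I \ot_\Bbbk \ffk \to R_\ffk^I$: injectivity comes from flatness of $\ffk$ over $\Bbbk$, and surjectivity comes from clearing denominators (given $f \in R_\ffk^I$, choose $c \in \Bbbk$ with $cf \in R$; then $cf \in R^I$ and $f$ is the image of $cf \ot \tfrac{1}{c}$). Your approach instead packages $R^I$ as the kernel of an explicit $\Bbbk$-linear map between free modules and invokes flat base change to identify the kernel after tensoring. Both are short; yours is slightly more conceptual and avoids the element chase, while the paper's is marginally more elementary and makes the isomorphism visibly the ``obvious'' map. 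Amusingly, the alternative you sketch at the end---injectivity plus clearing denominators---is essentially the paper's own argument (though the paper uses flatness rather than Lemma~\ref{lem:RIisfree} for injectivity).
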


\begin{proof} %There is a natural map $R^I \ot_{\Bbbk} \ffk \to R_{\ffk}$, and since scalars are $W$-invariant, the image lies within $R_{\ffk}^I$. The map is injective since $R$ is free over $\Bbbk$. We now argue that the map is surjective. If $f \in R^{\ffk}$, then there is some $c \in R$ such that $cf \in R$ (e.g. letting $c$ be the product of the denominators of each monomial in $f$). Clearly $cf \in R^I$, whence $f$ is the image of $cf \ot \frac{1}{c}$.

%\NL{There are many little mistakes in this little proof, so I propose this blue chunk instead:}

%{\color{blue}
There is a natural map $R^I \ot_{\Bbbk} \ffk \to R_{\ffk}$, and since scalars are $W$-invariant, the image lies within $R_{\ffk}^I$. The map is injective since $\ffk$ is flat over $\Bbbk$. We now argue that the map $R^I \ot_{\Bbbk} \ffk \to R_{\ffk}^I$ is surjective. If $f \in R_{\ffk}^I$, then there is some $c \in \Bbbk$ such that $cf \in R$ (e.g. letting $c$ be the product of the denominators of each monomial in $f$). Clearly $cf \in R^I$, whence $f$ is the image of $cf \ot \frac{1}{c}$.%}

\end{proof}

Let $B_{\at,\ffk}=R^I_\ffk\otimes_{R^M_\ffk}R^J_\ffk$. If
$I_{\bullet} = [[I= I_0 \subset K_1 \supset I_1 \subset \ldots K_m \supset I_m = J]]$ is a multistep $(I,J)$-expression, let 
\[ \BS_\ffk(I_{\bullet}) := R^{I_0}_\ffk \ot_{R_\ffk^{K_1}} R_\ffk^{I_1} \ot_{R_\ffk^{K_2}} \cdots \ot_{R_\ffk^{K_m}} R_\ffk^{I_m}.\]

\begin{lem}\label{lem.BSloc}
The natural inclusion map $\BS(I_{\bullet}) \to \BS(I_{\bullet}) \ot_{\Bbbk} \ffk$ is injective.
We have $\BS_\ffk(I_\bullet)\cong \BS(I_\bullet) \otimes_{\Bbbk}\ffk$. As a consequence we have an injective map
\begin{equation} \label{eq:basechangehom} \Hom(\BS(I_{\bullet}),\BS(I'_{\bullet})) \to \Hom(\BS_{\ffk}(I_{\bullet}),\BS_{\ffk}(I'_{\bullet})). \end{equation}
\end{lem}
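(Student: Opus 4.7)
The plan is to extract everything from the single fact that $\BS(I_{\bullet})$ is a free $\Bbbk$-module; then the three claims become standard manipulations.

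First I would prove that $\BS(I_\bullet)$ is free over $\Bbbk$. By Lemma~\ref{lem:RIisfree}, each $R^{I_i}$ and each $R^{K_j}$ is a free $\Bbbk$-module. Moreover, since the realization is Frobenius, every inclusion $R^{K_j}\subset R^{I_j}$ (or $R^{K_{j+1}}\subset R^{I_j}$) is a Frobenius extension, so $R^{I_j}$ is a finitely generated free module over $R^{K_{j+1}}$. Iterating from the right, this shows $\BS(I_{\bullet})$ is a free module over $R^{I_0}$, and composing with the freeness of $R^{I_0}$ over $\Bbbk$, we conclude $\BS(I_{\bullet})$ is free (and in particular torsion-free) over $\Bbbk$. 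Since localization at $\Bbbk\setminus\{0\}$ is injective on torsion-free modules, the inclusion $\BS(I_\bullet)\hookrightarrow \BS(I_\bullet)\otimes_{\Bbbk}\ffk$ is injective, yielding the first claim.

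For the second claim I would use flatness of $\ffk$ over $\Bbbk$ together with the previous lemma $R^I_{\ffk}\cong R^I\otimes_{\Bbbk}\ffk$. Concretely, the associativity of tensor product and the identity $(A\otimes_{\Bbbk}\ffk)\otimes_{B\otimes_{\Bbbk}\ffk}(C\otimes_{\Bbbk}\ffk)=(A\otimes_B C)\otimes_{\Bbbk}\ffk$ allow one to pull each $\otimes_{\Bbbk}\ffk$ out of the iterated tensor product defining $\BS_\ffk(I_{\bullet})$. Carrying this out step by step gives the natural isomorphism $\BS_\ffk(I_{\bullet})\cong \BS(I_{\bullet})\otimes_{\Bbbk}\ffk$.

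Finally, for the Hom statement, the map sends $\phi\colon \BS(I_{\bullet})\to \BS(I'_{\bullet})$ to $\phi\otimes_{\Bbbk}\id_{\ffk}$, re-identified with a morphism between $\BS_\ffk$'s via the second claim (this is automatically a morphism of $(R_{\ffk}^I,R_{\ffk}^J)$-bimodules). If $\phi\neq 0$, pick $x\in \BS(I_{\bullet})$ with $\phi(x)\neq 0$; then the injectivity of $\BS(I'_{\bullet})\to \BS(I'_{\bullet})\otimes_{\Bbbk}\ffk$ established in the first step forces $\phi(x)\otimes 1\neq 0$, so $\phi\otimes \id_{\ffk}\neq 0$.

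The only nontrivial ingredient is the freeness of $\BS(I_{\bullet})$ over $\Bbbk$; the remaining arguments are formal. I expect the one place to be careful is the iterated tensor argument for freeness, which relies essentially on the Frobenius extension property of each $R^{K}\subset R^{I}$ guaranteed by working with a Frobenius realization.
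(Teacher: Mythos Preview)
Your proof is correct and follows essentially the same approach as the paper: both establish freeness of $\BS(I_\bullet)$ over $\Bbbk$ via the Frobenius freeness of each $R^{I_j}$ over $R^{K_{j+1}}$ together with Lemma~\ref{lem:RIisfree}, and then deduce the remaining claims formally. The only cosmetic difference is that the paper verifies $\BS_\ffk(I_\bullet)\cong \BS(I_\bullet)\otimes_\Bbbk\ffk$ by tracking an explicit basis, whereas you invoke the tensor-associativity identity; both are standard.
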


\begin{proof}

% \BE{Maybe replace with explicit bases.}\LP{what do you mean?} By our assumptions from \S\ref{ss.realizations}, $R^I$ is free over $R^K$ whenever $I \subset K$. Hence all Bott-Samelson bimodules are free as left $R^{I_0}$-modules and hence as $\Bbbk$-modules by \Cref{lem:RIisfree}.  So base change is injective on bimodules. All denominators are $W$-invariant, and slide across all tensors. By moving all denominators to the far right, we see that the natural map $\BS(I_{\bullet}) \ot_{\Bbbk} \ffk \to \BS_{\ffk}(I_{\bullet})$ is surjective. \NL{I don't see why we can conclude.}\LP{Why not? it is injective and surjective}\NL{The first map has domain $\BS(I_{\bullet})$ and the second map has codomain $\BS_{\ffk}(I_{\bullet})$, that is why Ben said the thing about the explicit bases.} Thus $\BS_\ffk(I_\bullet)\cong \BS(I_\bullet) \otimes_{\Bbbk}\ffk$.

By our assumptions from \S\ref{ss.realizations}, $R^I$ is free over $R^K$ whenever $I \subset K$. We fix a basis $\{b_i^{I,K}\}$ for this extension.
Hence the Bott-Samelson bimodule $\BS(I_\bullet)$ is free as a right $R^{I_m}$-module with basis 
\begin{equation}\label{BSbasis}
    \left\{b_{i_1}^{I_0,K_1}\ot \ldots \ot b_{i_m}^{I_{m-1},K_m} \ot 1\right\}.
\end{equation}
It is also free as a right $\Bbbk$-module by \Cref{lem:RIisfree}. So base change is injective on bimodules.
Notice that $\{b_i^{I,K}\ot 1\}$ is a basis of $R^I_\ffk$ over $R^K_\ffk$. Hence 
\eqref{BSbasis}
gives also a basis of $\BS_{\ffk}(I_\bullet)$ over $R^{I_m}_\ffk$.
Since it sends a basis to a basis, we deduce that the natural map $\BS(I_\bullet) \otimes_{\Bbbk}\ffk\to\BS_\ffk(I_\bullet)$ is an isomorphism.

The localization functor gives the map in \eqref{eq:basechangehom}. For a morphism $\phi$ between bimodules over $\Bbbk$, let $\phi \ot 1$ denote its image, a morphism between bimodules over $\ffk$. The restriction of $\phi \ot 1$ to the subset $\BS(I_{\bullet}) \subset \BS_{\ffk}(I_{\bullet})$ is the original morphism $\phi$. Hence if $\phi \ot 1$ is the zero morphism, so is $\phi$.
\end{proof}

% By induction on the width of $I_\bullet$, it is enough to consider the case $I_\bullet=[[I \subset K \supset J]]$.
% We have a natural map 
% \[\iota:\BS(I_\bullet)\ot \ffk \cong R^I\otimes_{R^K}R^J_\ffk\rightarrow R^I_{\ffk}\otimes_{R^M_\ffk}R^J_\ffk= \BS_\ffk(I_\bullet).\] Since the ring $R^I$ is free over $R^K$, we can fix a basis $\{b_i\}$ which is also a basis of $R^I_{\ffk}$ over $R^K_\ffk$. This means that every element in both $R^I\otimes_{R^K}R^J_\ffk$ and $R^I_{\ffk}\otimes_{R^K_\ffk}R^J_\ffk$  can be written uniquely as $\sum_i b_i \ot c_i$ with $c_i\in R^J_\ffk$. From this it follows that $\iota$ is an isomorphism.

% As a consequence, there is a natural injective map \NL{In the next equation it seems to me that on the left we don't want $\Hom(\BS(I_{\bullet}),\BS(I'_{\bullet})) \ot_{\Bbbk} \ffk$, we just want $\Hom(\BS(I_{\bullet}),\BS(I'_{\bullet}))$} \BE{do we need to prove that the hom space localization is injective, or is that obvious enough? We only proved the bimodule localizations are injective...}
% \begin{equation} \label{eq:basechangehom} \Hom(\BS(I_{\bullet}),\BS(I'_{\bullet})) \ot_{\Bbbk} \ffk \to \Hom(\BS_{\ffk}(I_{\bullet}),\BS_{\ffk}(I'_{\bullet})).\end{equation}

\begin{lem} For $q \le \at$ and $b \in R^{\rightred(q)}$, let us temporarily write $\DLL_{r,\Bbbk}(q,b)$ for the double leaf as a morphism between Bott-Samelson bimodules over $\Bbbk$, and $\DLL_{r,\ffk}(q,b)$ for the double leaf as a morphism between Bott-Samelson bimodules over $\ffk$, where for the latter we identify $b$ with its image in $R_{\ffk}^{\rightred(q)}$. Under the map \eqref{eq:basechangehom}, we have 
\[ \DLL_{r,\Bbbk}(q,b) \mapsto \DLL_{r,\ffk}(q,b).\]
\end{lem}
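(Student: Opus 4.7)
The plan is to unwind the definition of the double leaf as a three-fold composition and verify compatibility with base change on each factor. Recall from \eqref{eq.dllq} that
\[
\DLL_{r,\Bbbk}(q,b) = \calD(\ELL([n,q])) \circ m_b \circ \ELL([n,q]),
\]
where $m_b$ denotes the ``sprinkle'' map, namely multiplication by $1^{\otimes}\otimes b\otimes 1 \in \BS(X_q)$. The base change functor $(-)\otimes_{\Bbbk}\ffk$ is monoidal and sends composition of morphisms to composition, so it suffices to check that each of the three factors maps to its $\ffk$-analogue under \eqref{eq:basechangehom}.

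First I would handle the middle factor: the sprinkle $m_b$ is simply multiplication by the element $1^{\otimes}\otimes b\otimes 1$ in the bimodule $\BS(X_q)$. Under the identification $\BS_{\ffk}(X_q)\cong \BS(X_q)\otimes_{\Bbbk}\ffk$ of \Cref{lem.BSloc}, this element maps to $1^{\otimes}\otimes b\otimes 1$ in $\BS_{\ffk}(X_q)$, and the induced multiplication map is exactly the sprinkle $m_b$ over $\ffk$ (where we tacitly identify $b\in R^{\rightred(q)}$ with its image in $R_{\ffk}^{\rightred(q)}$). So $m_b \otimes 1 = m_b$ under base change.

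Next I would handle the elementary light leaf $\ELL([n,q])$. By \Cref{def.ell}, this map is uniquely determined as the bimodule morphism $B_\at \to \BS(X_q)$ sending the cyclic generator $1\otimes 1$ to $1^{\otimes}$. Base change preserves generators and preserves the element $1^{\otimes}$, so the image of $\ELL_{\Bbbk}([n,q])$ under \eqref{eq:basechangehom} is the unique bimodule morphism $B_{\at,\ffk}\to \BS_{\ffk}(X_q)$ sending $1\otimes 1$ to $1^{\otimes}$, which is precisely $\ELL_{\ffk}([n,q])$.

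For the dual factor $\calD(\ELL([n,q]))$, I would argue similarly. The duality $\calD$ is defined diagrammatically (flipping pictures upside down and reversing orientations), and its construction is intrinsic to the structure of the $2$-category $\Frob$, which commutes with base change because Frobenius extensions (with their trace and coproduct structure) behave well under base change. Concretely, the cups and crossings comprising the upside-down light leaf are constructed from the same Frobenius data (traces $\pa^I_K$, comultiplications, etc.), and each of these operations satisfies $\pa^I_{K,\ffk} = \pa^I_K \otimes \id_{\ffk}$ and analogously for comultiplication, so $\calD(\ELL_{\Bbbk}([n,q]))\otimes 1 = \calD(\ELL_{\ffk}([n,q]))$. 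Assembling the three compatibilities and using functoriality of composition under base change gives the claim. The only possible subtlety, which the assumption that $\Bbbk$ is a domain and that $R^I$ is free over $\Bbbk$ (from \Cref{lem:RIisfree}) renders harmless, is to ensure the various tensor-product identifications do not introduce torsion artifacts; but this is exactly what \Cref{lem.BSloc} provides.
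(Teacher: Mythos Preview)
Your proposal is correct and follows essentially the same approach as the paper's proof, just with more detail. The paper's proof is two sentences: it notes that the diagrammatic calculus from \cite{ESW} is invariant under base change, and alternatively observes that $\ELL([n,q])$ is uniquely characterized by sending $1\otimes 1$ to $1^{\otimes}$, a property preserved by base change. Your treatment of the $\ELL$ factor is exactly this second argument, and your treatment of the dual and the sprinkle is a spelling-out of the first.
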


\begin{proof} The calculus from \cite{ESW} for interpreting diagrams is invariant under base change. Alternatively, $B_{\at}$ is generated as a bimodule by $1 \ot 1$, and elementary light leaves $\ELL([n,q])$ are determined uniquely in their Hom space by the fact that they send $1\otimes 1$ to $1\ot \ldots \ot 1$. This property is preserved by base change. \end{proof}

\begin{lem} Let $\mathbb B_q$ be a basis of $R^{\rightred(q)}$ over $\Bbbk$. Then the set $\{\DLL_{r,\Bbbk}(q,b)\ |\ q\le \at, b\in \mathbb B_q\}$ is linearly independent. Let $\PF_{<\at,\Bbbk}$ and $\PF_{<\at,\ffk}$ be defined as before for their respective realizations. The map \eqref{eq:basechangehom} induces an isomorphism
\begin{equation} \label{eq:pfisom} \PF_{<\at,\Bbbk} \ot_{\Bbbk}\, \ffk \simto \PF_{<\at,\ffk}. \end{equation}
\end{lem}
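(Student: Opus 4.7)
The plan is to reduce both claims to the SW-realization case over the fraction field $\ffk$, leveraging the injective base-change map \eqref{eq:basechangehom} and the compatibility of the double leaf construction with localization established in the immediately preceding lemma.

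For linear independence, I would first observe that since $R^{\rightred(q)}_\ffk \cong R^{\rightred(q)} \ot_\Bbbk \ffk$ (by the lemma proved earlier identifying invariants with base change), any $\Bbbk$-basis $\mathbb B_q$ of $R^{\rightred(q)}$ gives rise, via $b \mapsto b \ot 1$, to an $\ffk$-basis of $R^{\rightred(q)}_\ffk$. The base-changed realization is SW by hypothesis, so \Cref{prop.DLLatom} applies and yields linear independence of $\{\DLL_{r,\ffk}(q, b \ot 1) \mid q \le \at,\ b \in \mathbb B_q\}$ in $\End(B_{\at,\ffk})$ over $\ffk$. By the preceding lemma, this family is precisely the image of $\{\DLL_{r,\Bbbk}(q,b)\}$ under the map \eqref{eq:basechangehom}, which is injective by \Cref{lem.BSloc}. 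Linear independence of the image over $\ffk$ implies linear independence of the image over $\Bbbk$, and injectivity then forces linear independence of the preimage in $\End(B_\at)$.

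For the isomorphism $\PF_{<\at,\Bbbk} \ot_\Bbbk \ffk \simto \PF_{<\at,\ffk}$, I would argue as follows. The double leaf $\DLL_{r,\Bbbk}(q,b)$ depends $\Bbbk$-linearly on the sprinkle $b$, since $b$ enters the diagrammatic definition of \Cref{def.DLL} only through multiplication by $1^{\otimes} \ot b \ot 1$ in $\BS(X_q)$. Hence the span defining $\PF_{<\at,\Bbbk}$ in \Cref{defn:PFfirstdef} may equivalently be taken over $b \in \mathbb B_q$, and by the linear independence just established, $\PF_{<\at,\Bbbk}$ is free over $\Bbbk$ with basis $\{\DLL_{r,\Bbbk}(q,b) \mid q < \at,\ b \in \mathbb B_q\}$. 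Symmetrically, $\PF_{<\at,\ffk}$ is free over $\ffk$ with basis $\{\DLL_{r,\ffk}(q, b \ot 1)\}$ indexed by the same set. The natural base-change map sends the first basis bijectively to the second (up to the identification $\DLL_{r,\Bbbk}(q,b) \ot 1 \mapsto \DLL_{r,\ffk}(q,b\ot 1)$ from the preceding lemma), giving the desired isomorphism.

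I do not anticipate a serious obstacle here; the computation is essentially a matter of transporting bases through the injective localization. The one point requiring care is verifying that $\DLL_{r,\Bbbk}(q,b)$ is $\Bbbk$-linear in $b$, so that the seemingly enormous spanning set in \Cref{defn:PFfirstdef} collapses to the much smaller indexing set $\mathbb B_q$; this is immediate from the diagrammatic definition but deserves to be recorded explicitly.
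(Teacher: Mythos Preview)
Your proposal is correct and follows essentially the same approach as the paper: both arguments pass to $\ffk$, invoke \Cref{prop.DLLatom} to obtain a basis there, and pull back linear independence via the injective base-change map \eqref{eq:basechangehom}. The only difference is cosmetic---you establish linear independence first and then deduce the isomorphism, whereas the paper sets up the map $\PF_{<\at,\Bbbk}\ot_\Bbbk\ffk\to\PF_{<\at,\ffk}$, observes that an $\ffk$-spanning set is sent to an $\ffk$-basis, and reads off both conclusions at once.
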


\begin{proof} By definition $\PF_{< \at}$ is the span (over $\Bbbk$ or $\ffk$) of the double leaf morphisms. The map \eqref{eq:basechangehom} restricts to a map $\PF_{< \at, \Bbbk} \to \Hom(\BS_{\ffk}(I_{\bullet}),\BS_{\ffk}(I'_{\bullet}))$. By the previous lemma, the image of this map is contained in $\PF_{< \at, \ffk}$. Thus one has an induced map $\PF_{<\at, \Bbbk} \ot_{\Bbbk} \ffk \to \PF_{<\at,\ffk}$.

Note that $\mathbb B_q$ is sent by base change to a basis of $R^{\rightred(q)}_{\ffk}$ over $\ffk$. The elements $\{\DLL_{r,\Bbbk}(q,b) \ot 1 \}$ (ranging over the appropriate index set) form an $\ffk$-spanning set for the left-hand side of \eqref{eq:pfisom}, and are sent to $\{\DLL_{r,\ffk}(q,b)\}$, which form a $\ffk$-basis for $\PF_{< \at, \ffk}$ by Proposition \ref{prop.DLLatom}. Thus the map \eqref{eq:pfisom} is an isomorphism, and the elements $\{\DLL_{r,\Bbbk}(q,b) \ot 1\}$ are linearly independent over $\ffk$. Consequently, $\{\DLL_{r,\Bbbk}(q,b)\}$ are linearly independent over $\Bbbk$. \end{proof}

% Note that $\mathbb B_q$ is sent by base change to a basis of $R^{\rightred(q)}_{\ffk}$ over $\ffk$. The set of double leaves becomes linearly independent after base change to $\ffk$

% By the previous lemma, the $\ffk$-spanning set of the right-hand side is in the image, so the map is surjective. By Lemma \ref{lem.BSloc}, it is injective as well. \end{proof}

% \BE{I don't think we need this paragraph.} Because of this lemma, we omit the additional subscripts and write $\DLL_r(q,b)$ for $\DLL_{r,\Bbbk}(q,b)$, and $\DLL_r(q,b) \ot 1$ for $\DLL_{r,\ffk}(q,b)$.

\begin{lem} \label{lem:linindep} We have
\[ \PF_{<\at,\Bbbk} = \bigoplus_{q<\at} \pa_{\ma{q}w_J^{-1}}
(R^{\rightred(q)}\cdot \Delta_{M,(1)}^J)\ot \Delta_{M,(2)}^J.\]
\end{lem}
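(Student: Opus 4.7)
The plan is to deduce this directly from two facts already established in the excerpt: the explicit evaluation of double leaves from Lemma~\ref{lem.circleev}, and the linear independence of the double leaves over $\Bbbk$ from the immediately preceding lemma.

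First, recall from Definition~\ref{defn:PFfirstdef} that $\PF_{<\at,\Bbbk}$ is by definition the $\Bbbk$-linear span of $\DLL_r(q,b)$ as $q$ ranges over $(I,J)$-cosets with $q<\at$ and $b$ ranges over $R^{\rightred(q)}$. Lemma~\ref{lem.circleev} identifies this morphism with multiplication by the element $\pa_{\ma{q}w_J^{-1}}(b \cdot \Delta^J_{M,(1)}) \ot \Delta^J_{M,(2)}$ of $B_{\at} = R^I\otimes_{R^M}R^J$. Under the isomorphism $\End(B_\at)\cong B_\at$ of \eqref{eq:endatomic}, this means
\[
\PF_{<\at,\Bbbk} = \sum_{q<\at} \pa_{\ma{q}w_J^{-1}}\bigl(R^{\rightred(q)}\cdot \Delta^J_{M,(1)}\bigr) \ot \Delta^J_{M,(2)},
\]
as $\Bbbk$-submodules of $B_\at$. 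This gives the asserted equality of sets/submodules, so the only remaining content is that the sum is direct.

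Second, to upgrade the sum to a direct sum, I would choose for each $q<\at$ a $\Bbbk$-basis $\mathbb{B}_q$ of $R^{\rightred(q)}$, which exists since $R^{\rightred(q)}$ is a free $\Bbbk$-module by Lemma~\ref{lem:RIisfree}. By the previous lemma, the collection $\{\DLL_{r,\Bbbk}(q,b) : q<\at,\ b\in \mathbb{B}_q\}$ is $\Bbbk$-linearly independent in $\End(B_\at)$. Translating through Lemma~\ref{lem.circleev}, this says exactly that the elements $\pa_{\ma{q}w_J^{-1}}(b\cdot \Delta^J_{M,(1)}) \ot \Delta^J_{M,(2)}$, indexed by the same set, are linearly independent in $B_\at$. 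In particular, the $\Bbbk$-submodules
\[
V_q := \pa_{\ma{q}w_J^{-1}}\bigl(R^{\rightred(q)}\cdot \Delta^J_{M,(1)}\bigr) \ot \Delta^J_{M,(2)}
\]
for distinct $q$ are in direct sum position inside $B_\at$, because a nontrivial relation $\sum_q v_q = 0$ with $v_q\in V_q$ would expand as a nontrivial $\Bbbk$-linear relation among double leaves across different values of $q$, contradicting linear independence.

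No step presents a serious obstacle: the hard work is bundled into Lemma~\ref{lem.circleev} (which supplies the evaluation formula) and the previous lemma (which supplies the linear independence by base change to $\ffk$ and then invoking Proposition~\ref{prop.DLLatom} over the SW-realization). The mild subtlety, which I would flag explicitly, is that the sum-is-direct claim requires independence \emph{across different cosets $q$}, not just independence for fixed $q$; this is why invoking the full indexing set $\{(q,b): q<\at,\ b\in \mathbb{B}_q\}$ in the linear independence statement is essential.
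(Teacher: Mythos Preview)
Your proposal is correct and follows essentially the same approach as the paper: the equality of $\PF_{<\at,\Bbbk}$ with the indicated sum is already recorded in Definition~\ref{defn:PFfirstdef} (via Lemma~\ref{lem.circleev}), and the directness of the sum is exactly the linear independence of the double leaves proved in the immediately preceding lemma. The paper's own proof is a single sentence to this effect; you have simply unpacked the details.
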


\begin{proof} The span in Definition \ref{defn:PFfirstdef} in indeed a direct sum of subspaces, by the linear independence shown in the previous lemma. \end{proof}

Henceforth we return to the $\Bbbk$-linear setting by default (i.e. in the absence of a subscript). Now the question remains: is the inclusion $\PF_{<\at} \subset \End_{< \at}(B_{\at})$ an equality over $\Bbbk$, knowing that the result holds over $\ffk$? In Corollary \ref{cor.ST=LT} below, we prove that the answer is yes.

\begin{lem}\label{lem:othereasything} For any $\phi \in \End_{< \at}(B_{\at})$ there is some $n \in \Bbbk$ such that $n \phi \in \PF_{<\at}$. \end{lem}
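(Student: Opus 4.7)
The plan is to clear denominators after base change to $\ffk$. First, base change along $\Bbbk \to \ffk$ sends $\phi$ to a bimodule endomorphism $\phi \otimes 1 \in \End(B_{\at,\ffk})$. Since $\phi$ is a $\Bbbk$-linear combination of morphisms that factor through $\BS(M_{\bullet})$ for reduced expressions of cosets $q < \at$, and since $\BS(M_{\bullet}) \otimes_{\Bbbk} \ffk \cong \BS_{\ffk}(M_{\bullet})$ by Lemma~\ref{lem.BSloc}, the morphism $\phi \otimes 1$ lies in $\End_{<\at}(B_{\at,\ffk})$.

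After base change, the realization is an SW-realization, so Corollary~\ref{cor:pfisltoverSW} applies and gives $\End_{<\at}(B_{\at,\ffk}) = \PF_{<\at,\ffk}$. Hence $\phi \otimes 1 \in \PF_{<\at,\ffk}$. Using the isomorphism \eqref{eq:pfisom}, we may write $\phi \otimes 1$ as the image of some element $\sum_i \psi_i \otimes c_i$ with $\psi_i \in \PF_{<\at}$ and $c_i \in \ffk$. Since $\ffk$ is the fraction field of the domain $\Bbbk$, we can choose a common denominator $n \in \Bbbk$ such that $n c_i \in \Bbbk$ for every $i$; set $\psi := \sum_i (n c_i) \psi_i \in \PF_{<\at}$.

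Then in $\End(B_{\at,\ffk})$ we have $n(\phi \otimes 1) = \psi \otimes 1$, i.e.\ the images of $n\phi$ and $\psi$ under the map $\End(B_{\at}) \to \End(B_{\at,\ffk})$ coincide. By Lemma~\ref{lem.BSloc}, this map is injective, so $n\phi = \psi \in \PF_{<\at}$, which is the desired conclusion. The only delicate point is ensuring that base change preserves both the subspace $\End_{<\at}$ and the subspace $\PF_{<\at}$ compatibly, which is handled respectively by the isomorphism $\BS(I_\bullet) \otimes_{\Bbbk} \ffk \cong \BS_{\ffk}(I_\bullet)$ and by the preceding identification of double leaves under base change.
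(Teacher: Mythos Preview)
Your proof is correct and follows essentially the same approach as the paper's own argument: base change to $\ffk$, use that $\End_{<\at}(B_{\at,\ffk}) = \PF_{<\at,\ffk}$ for the SW-realization over $\ffk$, and then clear denominators using injectivity of the base-change map on endomorphisms. Your version is more carefully spelled out than the paper's (which somewhat loosely says $\phi$ is ``a $\ffk$-multiple of an element of $\PF_{<\at}$''), but the ideas and the cited ingredients are the same.
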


\begin{proof} In view of \Cref{lem.BSloc} and \eqref{eq:basechangehom}, we can regard both $\PF_{<\at}$ and $\End(B_{\at})$ as $\Bbbk$-submodules of $\End(B_{\at,\ffk})$, which we identify with $B_{\at,\ffk}$. We have $\End_{< \at}(B_{\at}) \subset \End_{< \at}(B_{\at,\ffk})$ by definition. Meanwhile, \Cref{cor:pfisltoverSW} holds over $\ffk$ and thus 
\begin{equation}  \label{lowertermsummary2} \End_{<\at}(B_{\at,\ffk}) 
\cong \bigoplus_{q<\at} \pa_{\ma{q}w_J^{-1}}
(R_\ffk^{\rightred(q)}\cdot \Delta_{M,(1)}^J)\ot \Delta_{M,(2)}^J. \end{equation}
So any $\phi \in \End_{<\at}(B_\at)$ is a $\ffk$-multiple of an element of $\PF_{<\at}$. Multiplying by the denominator, there exists $n \in \Bbbk$ such that $n\phi \in \PF_{<\at}$. \end{proof}

We continue with a divisibility lemma on Demazure operators, which ensures that Demazure operators do not ``produce'' additional divisibility by elements of $\Bbbk$.

\begin{lem}\label{Demazuredivisibility}
Let $b\in R$ and let $n\in \Bbbk$. If $n\mid \partial_{w_Mw_J^{-1}}(bg)$ for all $g\in R^J$, then $n\mid b$.
\end{lem}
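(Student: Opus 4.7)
Apply the iterated twisted Leibniz rule (Lemma~\ref{lem:iteratedleibnizforw}) with $w=w_Mw_J^{-1}=\mi{\at}$ to expand the hypothesis as
\[\partial_{\mi{\at}}(bg)=\sum_{x\le\mi{\at}}T'_x(b)\,\partial_x(g),\]
where the leading term at $x=\mi{\at}$ has coefficient $T'_{\mi{\at}}(b)=\mi{\at}(b)$. When $g\in R^J$, the factor $\partial_x(g)$ vanishes unless $x$ is a minimal representative of its right $W_J$-coset, so only such $x\le\mi{\at}$ contribute. The goal is to deduce $n\mid T'_{\mi{\at}}(b)=\mi{\at}(b)$; since $\mi{\at}$ acts on $R$ as a $\Bbbk$-linear automorphism of $R$, this yields $n\mid b$. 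The approach is to test the hypothesis against a family of elements $\xi^x\in R^J$ indexed by minimal coset representatives $x\le\mi{\at}$: each $\xi^x$ homogeneous of degree $\ell(x)$, satisfying $\partial_x(\xi^x)=1$ and $\partial_y(\xi^x)=0$ for every other minimal coset rep $y$ with $\ell(y)\ge\ell(x)$ (for $\ell(y)>\ell(x)$ this vanishing is automatic by degree). Such elements are the parabolic Schubert classes inside $R^J$; they can be constructed over the base ring $\Bbbk$ by a standard lower-triangular recipe using the Demazure surjectivity built into the Frobenius realization, and in particular $\xi^e=1$.

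With such $\xi^x$ available, I would perform upward induction on $\ell(x)$. The base case $x=e$ uses $g=\xi^e=1$ in the hypothesis to give $n\mid\partial_{\mi{\at}}(b)=T'_e(b)$. For the inductive step, apply the hypothesis at $g=\xi^x$: Schubert triangularity kills all terms $T'_y(b)\,\partial_y(\xi^x)$ with $y\ne x$ and $\ell(y)\ge\ell(x)$, leaving
\[\partial_{\mi{\at}}(b\xi^x)=T'_x(b)+\sum_{\substack{y\ \text{min}\\ \ell(y)<\ell(x)}}T'_y(b)\,\partial_y(\xi^x).\]
By the inductive hypothesis each $T'_y(b)$ appearing in the sum is already divisible by $n$, and the hypothesis of the lemma gives $n\mid\partial_{\mi{\at}}(b\xi^x)$; hence $n\mid T'_x(b)$. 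Taking $x=\mi{\at}$ concludes the argument.

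The main obstacle is the construction of the Schubert test classes $\xi^x$ with the prescribed triangular evaluation property over the general base ring $\Bbbk$: over a field this is classical parabolic Schubert calculus, but for a general Frobenius realization some bookkeeping is required to produce the $\xi^x$ integrally. A subsidiary technical point is the identification of the leading Leibniz coefficient $T'_{\mi{\at}}(b)=\mi{\at}(b)$, which follows from Lemma~\ref{lem:iteratedleibnizforw} by taking the all-ones subword $\mathbf{e}=(1,\dots,1)$, together with the fact that it is the unique subword recovering $\mi{\at}$ from any fixed reduced expression used in the statement of that lemma.
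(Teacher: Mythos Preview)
Your approach is correct and matches the paper's strategy: both expand via the iterated Leibniz rule, restrict to minimal right $W_J$-coset representatives, and isolate the coefficients $T'_x(b)$ by testing against Schubert-type elements of $R^J$, the paper merely phrasing this contrapositively (pick a Bruhat-minimal $y$ with $n\nmid T'_y(b)$) rather than as your forward length induction. The paper resolves your stated ``main obstacle'' by the explicit construction $\xi^y:=\partial_{y^{-1}w_M}(P_M)$, where $P_M\in R$ satisfies $\partial_{w_M}(P_M)=1$ (generalized Demazure surjectivity); a short length computation shows $\xi^y\in R^J$, $\partial_y(\xi^y)=1$, and $\partial_x(\xi^y)=0$ unless $x\le y$, which is exactly the triangularity you need.
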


\begin{proof}
Assume that $n\nmid b$. We need to find $g\in R^J$ such that $n\nmid\pa_{w_Mw_J^{-1}}(bg)$.

Recall from Lemma \ref{lem:iteratedleibnizforw} that for $w \in W$ and $f, g \in R$ we have
\[ \pa_w(fg) = \sum_{x \leq w} T'_x(f) \pa_x(g)\]
for certain operators $T'_x$ defined over $\Bbbk$, where $T'_w(f) = w(f)$. 
%In particular, the operators $T'_x$ are defined over $\Bbbk$. 
Let $W^J$ be the subset of elements in $W$ which are minimal (for the Bruhat order) in their right $W_J$-coset. We have $\partial_x(g)=0$ when $x\not \in W^J$ and $g \in R^J$, so we have 
\[\pa_w(fg) = \sum_{x \leq w, x\in W^J} T'_x(f) \pa_x(g).\]

We apply this formula when $w = w_M w_J^{-1}$. Let $W^J_M = W^J \cap W_M$. Since $n\nmid b$, then also $n\nmid w_Mw_J^{-1}(b)=T_{w_Mw_J^{-1}}'(b)$. So there exists some $y\in W_M^J$ (not necessarily unique) which is minimal with respect to the property that $n\nmid T'_y(b)$. Now we have
\begin{equation}\label{Leibnizmodn}
\pa_{w_Mw_J^{-1}}(bg)= \sum_{x \in W_M^J} T'_x(b)\pa_x(g)\equiv \sum_{x\not< y, x \in W_M^J} T'_x(b)\pa_x(g) \pmod n. 
\end{equation}

%\LP{Rewritten this paragraph}
Recall that $\ell(xw_M)=\ell(w_M)-\ell(x)$ for all $x\in W_M$.
Let $z=y^{-1}w_M$ so that $y.z=w_M$. We have
\[ \ell(w_Jy^{-1}w_M)=\ell(w_M)-\ell(w_Jy^{-1})=\ell(w_M)-\ell(y^{-1})-\ell(w_J)=\ell(z)-\ell(w_J),\]
%\NL{we have not introduced the notation LD so I added 7 words:}
thus the left descent set of $z$ contains $J$. By \cite[Lemma 3.2]{KELP1} we have  $\im(\pa_z)\subset R^J$. By our assumption of generalized Demazure surjectivity, we can choose $P_M\in R$ such that $\pa_{w_M}(P_M) = 1$. Set $g=\pa_z(P_M)\in R^J$ and note that $\pa_y(g)=1$.

Let $x\in W_M$. If $x.z$ is not reduced, then $\pa_x(g)= \pa_x\pa_z(P_M) = 0$  by \cite[(27))]{KELP1}. If $x.z$ is reduced, then 
\[\ell(w_M)-\ell(yx^{-1})=\ell(x.y^{-1}w_M)=\ell(x)+\ell(w_M)-\ell(y).\]
It follows that $\ell(yx^{-1})+\ell(x)=\ell(y)$, so $yx^{-1}.x=y$ and, in particular, $x\leq y$.
This means that $\pa_x(g)\neq 0$ only if $x\leq  y$.

Finally, we plug $g=\pa_z(P_M)$  into \eqref{Leibnizmodn} and we observe that $\pa_{w_Mw_J^{-1}}(bg)\equiv T'_y(b) \not \equiv 0 \pmod n$.
\end{proof}

Now we can prove that $\PF_{<\at}$, as a submodule of $\End(B_\at)$, is closed under division by elements in $\Bbbk$ (when that makes sense).

\begin{prop}\label{prop:smallertermsoverZ}
Let $\phi \in \PF_{<\at}$ and assume there exists  $n\in \Bbbk$ such that $\frac{1}{n}\phi \in \End(B_\at)$. Then $\frac{1}{n}\phi \in \PF_{<\at}$.
\end{prop}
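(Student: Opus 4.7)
The plan is to translate the statement into a divisibility problem in the polynomial ring $R$ and then establish that divisibility by descending induction on the Bruhat order, invoking Lemma~\ref{Demazuredivisibility} at each step. Concretely, by Lemma~\ref{lem:linindep} the element $\phi\in\PF_{<\at}$ has a unique expression
\[\phi = \sum_{q<\at}\pa_{\ma{q}w_J^{-1}}\bigl(b_q\cdot\Delta^J_{M,(1),i}\bigr)\otimes\Delta^J_{M,(2),i},\qquad b_q\in R^{\rightred(q)}.\]
Because the same direct-sum decomposition holds over the fraction field $\ffk$, it is enough to show that $n\mid b_q$ in $R$ for every $q<\at$: then each quotient $b_q/n$ lies in $R$, is automatically $W_{\rightred(q)}$-invariant (being a $\Bbbk$-scalar multiple of an invariant), and substituting those quotients back into the same formula exhibits $\tfrac{1}{n}\phi$ as an element of $\PF_{<\at}$.

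To extract a divisibility hypothesis in $R$, I would use that $B_\at = R^I\otimes_{R^M} R^J$ is free as a right $R^I$-module on $\{1\otimes\Delta^J_{M,(2),i}\}_i$, so that $\tfrac{1}{n}\phi\in B_\at$ says exactly that $\sum_{q<\at}\pa_{\ma{q}w_J^{-1}}(b_q\cdot\Delta^J_{M,(1),i})\in nR^I$ for each $i$. Since each $\pa_{\ma{q}w_J^{-1}}$ (with $\ma{q}w_J^{-1}\in W_M$) is $R^M$-linear, and $\{\Delta^J_{M,(1),i}\}_i$ is an $R^M$-basis of $R^J$, this system is equivalent to the single cleaner statement
\[\sum_{q<\at}\pa_{\ma{q}w_J^{-1}}(b_q g)\in nR\qquad\text{for every }g\in R^J.\]

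From here I would argue $n\mid b_q$ by descending induction on the Bruhat order among cosets $q<\at$. Fix $q_0$ and assume $n\mid b_{q'}$ has already been proved for every $q'>q_0$; subtracting those now congruence-trivial terms leaves $\sum_{q\not>q_0}\pa_{\ma{q}w_J^{-1}}(b_q g)\in nR$ for all $g\in R^J$. To isolate the contribution of $q_0$, I would post-compose on the left with $\pa_{w_M\ma{q_0}^{-1}}$: since $w_M$ is the longest element of $W_M$ and $\ma{q_0}\le w_M$, the factorization $w_M = (w_M\ma{q_0}^{-1})\cdot\ma{q_0}$ is reduced, giving $\pa_{w_M\ma{q_0}^{-1}}\circ\pa_{\ma{q_0}w_J^{-1}} = \pa_{w_Mw_J^{-1}}$. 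The contributions indexed by $q<q_0$ or by $q$ incomparable with $q_0$ must then be handled: for each such $q$, either the composite Demazure word is non-reduced (yielding a vanishing term) or it factors through a Demazure operator whose output can be absorbed by a secondary induction. Once those terms are controlled, Lemma~\ref{Demazuredivisibility} applied with $b=b_{q_0}$ gives $n\mid b_{q_0}$ and the induction closes.

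The main obstacle I anticipate is the combinatorial bookkeeping in the inductive step: cleanly separating the contributions of cosets that lie below or incomparable to $q_0$ after composing with the lifting operator $\pa_{w_M\ma{q_0}^{-1}}$. This will rest on a careful analysis of the reducedness of the words $w_M\ma{q_0}^{-1}\ma{q}w_J^{-1}$ as $q$ varies, or on replacing the single lifting operator by one attached to a reduced path tailored to $q_0$'s position in the Bruhat graph. An alternative I would keep in mind is to bypass the induction entirely by exhibiting $\PF_{<\at}$ as the kernel of the evaluation map $\mu\colon B_\at\to R^I$, $f\otimes g\mapsto f\cdot\mi{\at}(g)$, which admits the obvious $\Bbbk$-linear splitting $f\mapsto f\otimes 1$ and would immediately make $B_\at/\PF_{<\at}$ a free $\Bbbk$-module; however that reformulation is morally polynomial forcing over $\Bbbk$ and so is equivalent to the direct divisibility question we are attacking.
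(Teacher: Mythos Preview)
Your overall strategy matches the paper's: write $\phi$ uniquely in terms of the $b_q$, convert $n\mid\phi$ into the congruence $\sum_{q<\at}\pa_{\ma{q}w_J^{-1}}(b_q g)\equiv 0\pmod n$ for all $g\in R^J$, isolate one coset at a time via a lifting Demazure operator, and invoke Lemma~\ref{Demazuredivisibility}. The reduction steps you outline are correct (minor slip: $B_\at$ is free as a \emph{left} $R^I$-module on $\{1\otimes\Delta^J_{M,(2),i}\}$, not right).

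The genuine gap is the \emph{direction} of your induction. You propose descending induction (assume $n\mid b_{q'}$ for $q'>q_0$), then apply $\pa_{w_M\ma{q_0}^{-1}}$. But analyse what this operator actually kills. If $\pa_{w_M\ma{q_0}^{-1}}\circ\pa_{\ma{q}w_J^{-1}}\neq 0$, then the concatenation $(w_M\ma{q_0}^{-1}).(\ma{q}w_J^{-1})$ is reduced, hence so is $(w_M\ma{q_0}^{-1}).\ma{q}$; since the result lies in $W_M$ and $w_M=(w_M\ma{q_0}^{-1}).\ma{q_0}$, the standard compatibility of Bruhat order with reduced left-multiplication gives $\ma{q}\le\ma{q_0}$, i.e.\ $q\le q_0$. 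So the operator annihilates precisely the terms with $q\not\le q_0$ --- which includes all $q>q_0$, making your inductive hypothesis redundant --- while the terms with $q<q_0$ survive and you have no hypothesis to control them. This is exactly the ``main obstacle'' you flag, and it is not a bookkeeping issue but a structural one: no amount of tailoring the lifting operator will reverse which side of the Bruhat order it kills.

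The fix is simply to flip the direction: take $q_0$ \emph{minimal} among cosets with $n\nmid b_{q_0}$ (equivalently, run ascending induction). Then the terms with $q<q_0$ vanish $\pmod n$ by minimality, the terms with $q\not\le q_0$ vanish outright by the reducedness argument above, and only $\pa_{w_Mw_J^{-1}}(b_{q_0}g)$ remains $\pmod n$, so Lemma~\ref{Demazuredivisibility} gives the contradiction $n\mid b_{q_0}$. This is exactly what the paper does.
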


\begin{proof}
If $n$ is a unit in $\Bbbk$ the result is trivial, so assume otherwise.

Let $\phi\in \PF_{<\at}$ and assume that $\frac{1}{n}\phi\in \End(B_\at)\cong R^I\otimes_{R^M}R^J$. We can write
\[\phi = \sum_{q<\at} \pa_{\ma{q}w_J^{-1}}
	(b_q\cdot \Delta_{M,(1)}^J)\ot \Delta_{M,(2)}^J\]
for some unique $b_q\in R^{\rightred(q)}$. For clarity we choose to unravel Sweedler's notation. Choose dual bases $\{c_i\}$ and $\{d_i\}$ for $R^J$ over $R^M$ relative to the Frobenius trace map $\pa^J_M$. We have
\[\phi = \sum_i\sum_{q<\at} \pa_{\ma{q}w_J^{-1}}
	(b_q\cdot c_i)\ot d_i.\]

Since $d_i$ is a basis of $R^J$ over $R^M$, any element of $R^I \ot_{R^M} R^J$ is uniquely expressible as $\sum_i f_i \ot d_i$ for $f_i \in R^I$. In particular, if $n$ divides $\sum_i f_i \ot d_i$ we have 
\[\frac{1}{n} \sum_i f_i \ot d_i=\sum_i f'_i \ot d_i \]
for some unique $f'_i$. Then $\sum f_i \ot d_i =\sum_i nf'_i \ot d_i$, and by the unicity mentioned before, this implies that $nf_i'=f_i$ for all $i.$

Consequently, $\phi$ is divisible by $n$ if and only if for all $i$ we have
\[n \mid\sum_{q<\at} \pa_{\ma{q}w_J^{-1}}
	(b_q\cdot c_i).\]
We want to show that all the $b_q$ are actually divisible by $n$, so that $\frac{1}{n}\phi \in \PF_{<\at}$.
	
Assume for contradiction that there exists a minimal $r$ such that $n\nmid b_r$. Let $z$ be such that $z.\ma{r}w_J^{-1}=w_Mw_J^{-1}$. Then by similar arguments to the previous lemma we have 
\[0\equiv \pa_z\left(\sum_{q<\at} \pa_{\ma{q}w_J^{-1}}
	(b_q\cdot c_i)\right)\equiv\pa_{w_Mw_J^{-1}}(b_r \cdot c_i)\pmod n\]
 for each $i$.
The subset of $R^J$ consisting of those $c$ for which $\pa_{w_M w_J^{-1}}(b_r \cdot c) \equiv 0 \pmod n$ is evidently an $R^M$-submodule. Since this submodule contains a basis $\{c_i\}$ for $R^J$ over $R^M$, it must contain all of $R^J$. Thus
\[ n\mid \pa_{w_Mw_J^{-1}}(b_r \cdot g)\]
for all $g \in R^J$. By \Cref{Demazuredivisibility} we deduce $n\mid b_r$, leading to a contradiction.
\end{proof}

\begin{cor}\label{cor.ST=LT}
For an almost SW-realization we have $\PF_{<\at}=\End_{<\at}(B_\at)$. In particular, $\PF_{<\at}$ is an $(R^I, R^J)$-bimodule.
\end{cor}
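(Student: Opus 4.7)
The proof is essentially a direct assembly of the three preceding results, so the plan is short. The containment $\PF_{<\at} \subset \End_{<\at}(B_\at)$ is already \Cref{lem:easycontain}, so only the reverse inclusion needs to be proved.

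Take $\phi \in \End_{<\at}(B_\at)$. By \Cref{lem:othereasything}, there exists $n \in \Bbbk$ such that $\psi := n\phi$ lies in $\PF_{<\at}$. Now $\phi = \tfrac{1}{n}\psi$ is, by construction, an element of $\End(B_\at)$ (we started with it there), so the hypotheses of \Cref{prop:smallertermsoverZ} are satisfied for $\psi$ and $n$. That proposition then yields $\phi = \tfrac{1}{n}\psi \in \PF_{<\at}$, giving the reverse inclusion.

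The final sentence of the corollary is immediate: $\End_{<\at}(B_\at)$ is a sub-bimodule of $\End(B_\at)$ over $(R^I,R^J)$ because it is a two-sided ideal in a category where $R^I$ acts on the left and $R^J$ acts on the right and these actions commute with all morphisms (this is exactly the remark made just after \Cref{def.lowerterm}). Hence its identification with $\PF_{<\at}$ endows the latter with the same bimodule structure.

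The main content of the argument really lies in the two inputs \Cref{lem:othereasything} and \Cref{prop:smallertermsoverZ}, which together implement a ``clearing denominators then redistributing'' strategy; the present corollary is the final packaging step, and I do not foresee any obstacle beyond citing them correctly.
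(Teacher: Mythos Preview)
Your proof is correct and follows essentially the same route as the paper: cite \Cref{lem:easycontain} for one inclusion, then for the reverse take an element of $\End_{<\at}(B_\at)$, apply \Cref{lem:othereasything} to clear denominators into $\PF_{<\at}$, and invoke \Cref{prop:smallertermsoverZ} to divide back. Your added justification of the bimodule claim via the discussion after \Cref{def.lowerterm} is fine and slightly more explicit than the paper's version.
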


\begin{proof}
The containment $\PF_{<\at} \subset \End_{\at}(B_{\at})$ was already shown in \Cref{lem:easycontain}. Now pick an arbitrary element $\psi\in \End_{<\at}(B_\at)$. By \Cref{lem:othereasything}, there is some $n \in \Bbbk$ such that $n\psi \in \PF_{<\at}$. Then $\psi=\frac{1}{n}(n\psi)\in \End_{<\at}(B_\at)$, so by \Cref{prop:smallertermsoverZ} we deduce that $\psi\in \PF_{<\at}$.
\end{proof}

%\BE{commented everything else, its in the commented stuff file}

% We have discussed lower terms within morphism spaces, but there is also a notion
% of lower terms within the bimodules themselves.

% \begin{defn}
% Let $p$ be an $(I, J)$-coset, and $B$ be an $(R^I , R^J)$-bimodule. Let
% $N_{<p} (B) \subset B$ denote the sub-bimodule spanned by the images of morphisms inside
% $\Hom_{<p}(-, B)$.    
% \end{defn}

% \begin{lem}
% We have $\End_{<\at}(B_{\at})\cong \Hom(B_{\at},N_{<\at}(B_{\at}))$.
% \end{lem}
% \begin{proof}
% The inclusion $\End_{<\at}(B_{\at})\subset\Hom(B,N_{<\at}(B_{\at}))$ is clear by definition of $N_{<\at}$. Let now $f\not \in \End_{<\at}(B_\at)$. So $f= 1\ot r+g$, with $g\in \PF_{<\at}$. If $f(1\ot 1)\in \PF_{<\at}$, then $1\ot r\in \PF_{<\at}$ but this is not possible because the sum is direct.
% \LP{do we need this lemma? if so add details that $\End_{<\at}B_\at=\PF_{<\at} \oplus (1\ot R^J)$.}
% \end{proof}

\section{Polynomial forcing and atomic Leibniz}\label{s.PF}

%In this section, we require that the base ring of our realization and our polynomial ring is a field $\Bbbk$.

\subsection{Polynomial forcing for atomic cosets} \label{subsec:polyforceatomic}

Now we explain the concept of polynomial forcing. We consider first the case of an atomic coset $\at\expr [[I\subset M\supset J]]$.

Recall that $\mi{\at}J = I\mi{\at} $ and $ \leftred(\at)=I$ since $\at$ is core. Thus there is an isomorphism
\begin{equation} R^J \to R^I, \qquad f \mapsto \mi{\at}(f). \end{equation}

\begin{defn} \label{defn:polyforceforp} Let $\at$ be an atomic coset with reduced expression $[[I \subset M \supset J]]$ and let $f \in R^J$. We say that \emph{polynomial forcing} holds for $f$ and $\at$ if we have
\begin{equation} \label{polyforceatomic} 1 \ot f - \mi{\at}(f) \ot 1 \in \End_{< \at}(B_\at). \end{equation}
We say that \emph{polynomial forcing} holds for $\at$ if \eqref{polyforceatomic} holds for all $f \in R^J$.
\end{defn}

\begin{lem} \label{lem:polyforceaddmult} Suppose that \eqref{polyforceatomic} holds for $f_1$ and for $f_2$, with both $f_1, f_2 \in R^J$. Then it holds for $f_1 + f_2$ and $f_1 f_2$. \end{lem}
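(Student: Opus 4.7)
The plan is to use a standard telescoping identity, leveraging the fact that $\End_{<\at}(B_\at)$ is an $(R^I,R^J)$-sub-bimodule of $\End(B_\at)\cong B_\at$. The only nontrivial input besides the hypothesis will be the ring-homomorphism property of the action of $\mi{\at}$ on polynomials, which is just the fact that $\mi{\at}\in W$ acts on $R$ by ring automorphisms (and sends $R^J$ into $R^I$ because $\at$ is core, i.e.\ $\mi{\at}J\mi{\at}^{-1}=I$).

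The additive case is essentially immediate: $\mi{\at}$ is $\Bbbk$-linear, so
\begin{equation*}
1\ot(f_1+f_2)-\mi{\at}(f_1+f_2)\ot 1 = \bigl(1\ot f_1 - \mi{\at}(f_1)\ot 1\bigr) + \bigl(1\ot f_2-\mi{\at}(f_2)\ot 1\bigr),
\end{equation*}
which lies in $\End_{<\at}(B_\at)$ because the latter is closed under addition.

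For the multiplicative case, the plan is to write the difference as a telescoping sum
\begin{equation*}
1\ot f_1 f_2 - \mi{\at}(f_1 f_2)\ot 1 = \bigl(1\ot f_1-\mi{\at}(f_1)\ot 1\bigr)\cdot f_2 \; +\; \mi{\at}(f_1)\cdot \bigl(1\ot f_2-\mi{\at}(f_2)\ot 1\bigr),
\end{equation*}
where $\cdot f_2$ denotes the right $R^J$-action on $B_\at$ and $\mi{\at}(f_1)\cdot$ denotes the left $R^I$-action. The identity itself is a direct bimodule computation, using the ring-homomorphism identity $\mi{\at}(f_1)\mi{\at}(f_2)=\mi{\at}(f_1f_2)$ to combine the $\mi{\at}(f_1)\ot f_2$ terms.

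Both summands lie in $\End_{<\at}(B_\at)$: the first because $1\ot f_1-\mi{\at}(f_1)\ot 1\in\End_{<\at}(B_\at)$ by hypothesis and $\End_{<\at}(B_\at)$ is stable under the right $R^J$-action, and the second by the analogous stability under the left $R^I$-action. The mildly delicate point to check is simply that left multiplication by $\mi{\at}(f_1)$ is genuinely an action by an element of $R^I$; this is exactly the core-coset identity above. No further obstacle arises, as the bimodule structure on $\End_{<\at}(B_\at)$ has already been recorded (see Definition~\ref{def.lowerterm} and the discussion following it).
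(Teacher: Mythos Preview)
Your proof is correct and essentially identical to the paper's: the same telescoping identity for the product, and the same appeal to the $(R^I,R^J)$-bimodule structure of $\End_{<\at}(B_\at)$. Your extra remark that $\mi{\at}(f_1)\in R^I$ because $\at$ is core is a helpful clarification the paper leaves implicit.
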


\begin{proof} Additivity is trivial, because $\Hom_{< \at}$ is closed under addition. Now consider the following:
\begin{equation} 1  \ot f_1 \cdot f_2 - \mi{\at}(f_1 \cdot f_2) \ot 1 = (1 \ot f_1 - \mi{\at}(f_1) \ot 1) \cdot f_2 + \mi{\at}(f_1) \cdot (1 \ot f_2 - \mi{\at}(f_2) \ot 1). \end{equation}
Since $\End_{<\at}(B_\at)$ is closed under right and left multiplication, both terms on the right-hand side above are in $\End_{<\at}(B_\at)$, and the result is proven. \end{proof}

Before continuing, let us contrast polynomial forcing with an a priori different notion.

\begin{defn} Consider $\PF_{<\at}$ from Definition \ref{defn:PFfirstdef}. We say that \emph{$\PF$-forcing} holds  for $\at$ and $f$ if $1 \ot f - \mi{\at}(f) \ot 1 \in \PF_{<\at} \subset \End(B_{\at})$. We say that \emph{$\PF$-forcing} holds for $\at$ if it holds for $\at$ and $f$, for all $f\in R^J.$ \end{defn}

% In \S\ref{ssec:equivalence} when we will compare polynomial forcing to the atomic Leibniz rule, \NL{I don't know what do we mean here} it will actually be $\PF$-forcing which is relevant. 
For an almost-SW realization, $\PF$-forcing is equivalent to polynomial forcing, since $\PF_{<\at} = \End_{< \at}(B_{\at})$ by Corollary \ref{cor.ST=LT}. In general, it is not obvious that $\PF$-forcing is multiplicative. The proof of multiplicativity in Lemma \ref{lem:polyforceaddmult} relied on the fact that $\End_{<\at}(B_{\at})$ is an $(R^I, R^J)$-bimodule, whereas $\PF_{<\at}$ is only a priori a right $R^J$-module.

%\LP{Removed paragraph about $N$-lower terms}
%We have discussed lower terms within morphism spaces, but there is also a notion of lower terms within the bimodules themselves.

%\begin{defn} Let $p$ be an $(I,J)$-coset, and $B$ be an $(R^I, R^J)$-bimodule. Let $N_{< p}(B) \subset B$ denote the sub-bimodule spanned by the images of morphisms inside $\Hom_{< p}(-,B)$. \end{defn}

%\begin{rem} \BE{modified start and footnote here.} Assume an SW-realization\footnote{The result holds in broader generality, see \cite[Proposition 3.3]{KELP2}.}. In \cite[Proposition 3.25 and Theorem 3.30]{KELP2} we prove that $\End_{<\at}(B_\at)=\Hom(B_\at,N_{<\at}(B_\at))$. Thus, $N_{<\at}(B_\at)$ and $\End_{<\at}(B_\at)$ are identified under the isomorphism $B_{\at} \cong \End(B_{\at})$. Consequently, polynomial forcing is equivalent to
%\begin{equation} 1 \ot f - \mi{\at}(f) \ot 1 \in N_{< \at}(B_\at). \end{equation}
%\end{rem}

\subsection{Equivalence} \label{ssec:equivalence}

Now we prove the equivalence between atomic Leibniz rules and polynomial forcing. To formulate an intermediate condition in the proof, which is also  of importance for the next section, we agree to say the following.
Given an atomic $(I,J)$-coset $\at$ and an element $f\in R^J$, \emph{an atomic Leibniz rule for $\at$ and $f$} is said to hold if there exist elements $T_q(f)$ such that equation~\eqref{eq:atomicleibniz}  is satisfied for all $g\in R^J$. Since this condition is stated for one polynomial $f$ at a time, there is no requirement that $T_q$ is an $R^M$-linear operator.

\begin{prop}\label{prop:AL=PF}
	Let $\at\expr [I,M,J]$ be an atomic $(I,J)$-coset, and $f \in R^J$.
	We have a rightward atomic Leibniz rule for $\at$ and $f$ if and only if $\PF$-forcing holds for $\at$ and $f$. Moreover, for an almost-SW realization, %\NL{I think that we should add: if $\PF$-forcing holds for $\at$ and $f$, then} 
 if $\PF$-forcing holds for $\at$ and $f$, then 
the atomic Leibniz rule %for $\at$ and $f$ 
is unique, i.e., the elements $T_q(f)\in R^{\rightred(q)}$ in \eqref{eq:atomicleibniz} are uniquely determined.
\end{prop}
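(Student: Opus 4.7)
My plan is to unfold the identification $\End(B_\at) \cong B_\at = R^I \ot_{R^M} R^J$ together with \Cref{lem.circleev}, so that $\PF$-forcing becomes an explicit equation in $B_\at$. Expanding in a basis of $R^J$ over $R^M$ and comparing coefficients will produce the atomic Leibniz rule evaluated at basis elements, and $R^M$-linearity will bridge the gap to all $g \in R^J$.

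Write $\{c_i\} := \Delta^J_{M,(1)}$ and $\{d_i\} := \Delta^J_{M,(2)}$ for dual bases of $R^J$ over $R^M$ with respect to the Frobenius trace $\pa^J_M$. As functions $R^J \to R$, the operators $\pa^J_M$ and $\pa_\at$ coincide (they differ only in their declared codomains, $R^M \subset R^I$). The reconstruction identity $1 = \sum_i \pa^J_M(c_i) d_i$ in $R^J$, together with the freedom to slide $R^M$-elements across the tensor, yields
\begin{align*}
1 \ot f &= \sum_i \pa_\at(f c_i) \ot d_i, \\
\mi{\at}(f) \ot 1 &= \sum_i \mi{\at}(f)\, \pa_\at(c_i) \ot d_i.
\end{align*}
By \Cref{lem.circleev}, any element of $\PF_{<\at}$ has the form $\sum_{q<\at}\sum_i \pa_{\ma{q} w_J^{-1}}(T_q(f) c_i) \ot d_i$ for some $T_q(f) \in R^{\rightred(q)}$. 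Since $R^J$ is free over $R^M$ on $\{d_i\}$, we have $B_\at = \bigoplus_i (R^I \ot d_i)$, and so $\PF$-forcing for $(\at, f)$ with data $(T_q(f))_{q<\at}$ is equivalent, coefficient by coefficient, to
\[
\pa_\at(f c_i) = \mi{\at}(f)\,\pa_\at(c_i) + \sum_{q<\at} \pa_{\ma{q} w_J^{-1}}(T_q(f) c_i) \quad \text{for all } i,
\]
which is the atomic Leibniz rule \eqref{eq:atomicleibniz} evaluated at $g = c_i$. Both sides of the Leibniz equation are $R^M$-linear in $g$ (the relevant Demazure operators are $R^I$-linear on their natural domains, hence $R^M$-linear, and the $T_q(f)$ do not depend on $g$), so validity on the basis $\{c_i\}$ is equivalent to validity for all $g \in R^J$. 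This proves the asserted equivalence.

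For the uniqueness statement under the almost-SW hypothesis, \Cref{lem:linindep} identifies $\PF_{<\at}$ as a \emph{direct} sum indexed by $q < \at$, so the component of $1 \ot f - \mi{\at}(f) \ot 1$ in the $q$-th summand is uniquely determined. Within each summand, the map $b \mapsto \pa_{\ma{q}w_J^{-1}}(b \cdot \Delta^J_{M,(1)}) \ot \Delta^J_{M,(2)}$ from $R^{\rightred(q)}$ is precisely $b \mapsto \DLL_r(q,b)$, and is injective by the $\Bbbk$-linear independence of double leaves established just before \Cref{lem:linindep}. Hence $T_q(f)$ is uniquely determined for each $q < \at$.

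The principal bookkeeping obstacle is tracking the various ``Demazure operators'' simultaneously in play: one must identify $\pa^J_M$ (codomain $R^M$) with $\pa_\at$ (codomain $R^I$), and $\pa_{\ma{q}w_J^{-1}}|_{R^{\rightred(q)}}$ with the factored operator $\pa^{\leftred(q)}_I \circ \pa_{q^{\core}}$, so that the coefficient comparison in $R^I \ot d_i$ makes syntactic sense and matches the shape of the atomic Leibniz rule exactly.
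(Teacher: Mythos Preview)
Your proof is correct and follows essentially the same approach as the paper: expand $1\ot f$ and $\mi{\at}(f)\ot 1$ via the Frobenius reconstruction identity (the paper cites \cite[(2.2)]{ESW} for this), compare coefficients in the left-$R^I$ basis $\{1\ot d_i\}$ of $B_\at$, and extend to all $g\in R^J$ by $R^M$-linearity; your treatment of uniqueness is in fact more detailed than the paper's, which simply invokes \Cref{lem:linindep}. One small imprecision: the justification ``$R^I$-linear on their natural domains, hence $R^M$-linear'' is slightly off, since the domains $R^J$ and $R^{\rightred(q)}$ are not naturally $R^I$-modules---the clean reason is that all Demazure operators in sight are indexed by elements of $W_M$ and are therefore $R^M$-linear.
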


\begin{proof}
Note that $R^J\subset R^M$ is a Frobenius extension, see \cite[Section 24.3.2]{GBM}. The trace map is
\[\pa^J_M := \pa_{w_M w_J^{-1}} = \pa_\at.\] 
Let $\Delta^J_M \in R^J \ot_{R^M} R^J$ denote the coproduct element (the image of $1 \in R^J$ under the coproduct map), which we often denote using Sweedler notation. Then \cite[(2.2) with $f=1$]{ESW} implies that
\begin{equation} \label{eqforonetensor} 1 \ot 1 = \pa_\at(\Delta^J_{M,(1)}) \ot \Delta^J_{M,(2)}. \end{equation}
Multiplying $\mi{\at}(f)$ on the left 
we get
\begin{equation*} \mi{\at}(f) \ot 1 = \mi{\at}(f) \cdot \pa_\at(\Delta^J_{M,(1)}) \ot \Delta^J_{M,(2)}. \end{equation*}
Meanwhile, \cite[(2.2)]{ESW} implies that
\begin{equation} \label{eqforfotone} 1 \ot f = \pa_\at(f \cdot \Delta^J_{M,(1)}) \ot \Delta^J_{M,(2)}. \end{equation}
Thus we have
\begin{equation} \label{eq:differenceis} 1 \ot f - \mi{\at}(f) \ot 1 = \left[ \pa_\at(f \cdot \Delta^J_{M,(1)}) - \mi{\at}(f) \cdot \pa_\at(\Delta^J_{M,(1)}) \right] \ot \Delta^J_{M,(2)}. \end{equation}

Letting $g = \Delta^J_{M,(1)}$, a rightward atomic Leibniz rule for $\at$ and $f$ gives
\begin{equation} \pa_{\at}(f \cdot g) - \mi{\at}(f) \cdot \pa_{\at}(g) = \sum_{q<\at}\pa_{\ma{q}w_J^{-1}}(T_q(f) \cdot g). \end{equation}
%where we use the abuse of notation $\pa_q := \pa_{\ma{q} w_J^{-1}}$ (this abuse of notation was discussed in \S\ref{s.AL}). 
Thus we have
\begin{equation} \label{eq:differenceisagain} 1 \ot f - \mi{\at}(f) \ot 1 = \sum_{q<\at}\pa_{\ma{q}w_J^{-1}}\left(T_q(f) \cdot \Delta^J_{M,(1)} \right) \ot \Delta^J_{M,(2)}. \end{equation}
which lies in $\PF_{<\at}$  by definition.

We prove now the other direction. We have
\begin{equation*} 1 \ot f - \mi{\at}(f) \ot 1 \in \PF_{<\at}. \end{equation*}
By \eqref{eq:differenceis}, we obtain
\begin{equation*} \left[ \pa_\at(f \cdot \Delta^J_{M,(1)}) - \mi{\at}(f) \cdot \pa_\at(\Delta^J_{M,(1)}) \right] \ot \Delta^J_{M,(2)} \in \End_{< \at}(B_\at). \end{equation*}
By definition of $\PF_{<\at}$ we deduce that
\begin{align} \nonumber \pa_\at(f \cdot \Delta^J_{M,(1)}) & \ot \Delta^J_{M,(2)} = \\
	\label{atomicbuttensor'}& \mi{\at}(f) \cdot \pa_\at(\Delta^J_{M,(1)}) \ot \Delta^J_{M,(2)} + \sum_{q<\at} \pa_{\ma{q}w_J^{-1}}(T_q(f) \cdot \Delta_{M,(1)}^J) \ot \Delta_{M,(2)}^J \end{align}
for some $T_q(f) \in R^{\rightred(q)}$. For an almost-SW realization, Lemma \ref{lem:linindep} implies that the  $T_q(f)$ are unique.

Note that $\Delta^J_{M,(1)}$ and $\Delta^J_{M,(2)}$ run over dual bases of $R^J$ over $R^M$. The elements $\mathbb{B} = \{1 \ot \Delta^J_{M,(2)}\}$ form a basis for $B_\at$, when viewed as a left $R^I$-module. Thus in order for the equation \eqref{atomicbuttensor'} to hold, it must be an equality for each coefficient with respect to the basis $\mathbb{B}$. Hence we conclude
\begin{equation} \label{atomicbutdelta'} \pa_\at(f \cdot \Delta) = \mi{\at}(f) \cdot \pa_\at(\Delta) + \sum_{q<\at} \pa_{q}(T_q(f) \cdot \Delta) \end{equation}
for all $\Delta$ ranging through a basis of $R^J$ over $R^M$.

Using the linearity of \eqref{atomicbutdelta'} over $R^M$, we deduce that it continues to hold when $\Delta$ is replaced by any element $g \in R^J$. Thus the atomic Leibniz rule for $f$ is proven.
\end{proof}

\begin{thm}\label{thm.AL=PF}
Assume an almost-SW realization (see Definition \ref{defn:almostSW}). Let $\at\expr[I,M,J]$ be an atomic $(I,J)$-coset. Then the following are equivalent.
\begin{enumerate}
\item\label{AL} A rightward atomic Leibniz rule holds for $\at$.
\item\label{lAL} A leftward atomic Leibniz rule holds for $\at$.
\item\label{ALc} For a set of generators $\{c_i\}$ of the $R^M$-algebra $R^J$, a rightward atomic Leibniz rule holds for $\at$ and each $c_i$.
\item\label{lALc} For a set of generators $\{c_i\}$ of the $R^M$-algebra $R^J$, a leftward atomic Leibniz rule holds for $\at$ and each $c_i$.
\item\label{PF} Polynomial forcing holds for $\at$.
\end{enumerate}
Moreover, in this case, there are unique operators $T_q, T'_q$ that satisfy atomic Leibniz rules.
\end{thm}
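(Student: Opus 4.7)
The plan is to reduce everything to \Cref{prop:AL=PF}, which already handles the rightward atomic Leibniz rule versus $\PF$-forcing equivalence on a single element $f$, together with \Cref{cor.ST=LT}, which identifies $\PF_{<\at}$ with $\End_{<\at}(B_\at)$ (so $\PF$-forcing equals polynomial forcing) in the almost-SW setting.

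First I would close up the rightward half of the diamond, namely $(1) \Leftrightarrow (3) \Leftrightarrow (5)$. The implication $(1) \Rightarrow (3)$ is trivial. For $(3) \Rightarrow (5)$, apply \Cref{prop:AL=PF} to each generator $c_i$ to get $\PF$-forcing, hence polynomial forcing, for each $c_i$. Polynomial forcing holds trivially for scalars in $R^M$ (both $1 \ot r$ and $r \ot 1$ agree in $R^I \ot_{R^M} R^J$, and $\mi{\at}$ fixes $R^M$). By \Cref{lem:polyforceaddmult}, polynomial forcing is closed under addition and multiplication of $f$, so it propagates from the $c_i$ together with $R^M$ to the entire algebra $R^J$. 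For $(5) \Rightarrow (1)$, \Cref{prop:AL=PF} gives, for each $f \in R^J$, uniquely determined elements $T_q(f) \in R^{\rightred(q)}$ satisfying \eqref{eq:atomicleibniz}; the key remaining task is to check that $T_q \co R^J \to R^{\rightred(q)}$ is actually $R^M$-linear. Additivity follows by summing the equations for $f_1$ and $f_2$ and invoking uniqueness. For $R^M$-linearity, given $r \in R^M$ and $f \in R^J$, note that $\mi{\at} \in W_M$ fixes $r$, and every Demazure operator that appears is $R^M$-linear (since $R^M$ is contained in the invariant ring relevant to each operator appearing in \eqref{eq:atomicleibniz}); thus the equation for $rf$ is obtained by multiplying the equation for $f$ by $r$, and uniqueness forces $T_q(rf) = rT_q(f)$.

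Next I would handle the leftward side, $(2) \Leftrightarrow (4) \Leftrightarrow (5)$. The cleanest route is to first note that under the almost-SW hypothesis we also have $\PF^l_{<\at} = \End_{<\at}(B_\at)$: the same localization argument used to derive \Cref{cor.ST=LT} from \Cref{prop.DLLatom} applies verbatim using \Cref{prop.DLLatomleft} and \Cref{lem.circleevleft} in place of their right-sprinkled analogues. Then I would rerun the proof of \Cref{prop:AL=PF} with left-sprinkled double leaves: starting from \eqref{eq:differenceis} and using \Cref{lem.circleevleft} and \eqref{eq:atomicleibnizBETTER}, the sum $\sum_q \pa_I^{\leftred(q)}(T'_q(f) \cdot \pa_{q^{\core}}(\iota_J^{\rightred(q)}\Delta^J_{M,(1)})) \ot \Delta^J_{M,(2)}$ is precisely the left-sprinkled analogue, so the same biconditional and uniqueness conclusion hold. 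The equivalences $(2)\Leftrightarrow(4)\Leftrightarrow(5)$ then follow exactly as in the rightward case, with the identical generator-and-closure argument promoting generator-wise rules to the full algebra.

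The uniqueness statement at the end of the theorem is immediate from the uniqueness clause in \Cref{prop:AL=PF} (and its leftward counterpart), applied pointwise. The main obstacle I anticipate is nothing deep, but rather the bookkeeping needed to justify the left-sprinkled version of \Cref{prop:AL=PF} and of \Cref{cor.ST=LT} — specifically, making sure that every intermediate lemma in \S\ref{sec:lowerterms} (injectivity under base change, the localization-of-denominators argument of \Cref{prop:smallertermsoverZ}, and the divisibility lemma \Cref{Demazuredivisibility}) carries through when the polynomial $b$ is sprinkled on the left of $\pa_{q^{\core}}$ rather than in its argument. Since those arguments are formal and use only the structure of Frobenius extensions and the symmetry between the two sides of a Bott--Samelson bimodule, this transfer is routine but tedious, and forms the only real content beyond \Cref{prop:AL=PF} and \Cref{cor.ST=LT}.
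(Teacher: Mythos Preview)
Your proposal is correct and follows essentially the same approach as the paper: both reduce to \Cref{prop:AL=PF} and \Cref{cor.ST=LT}, establish $(1)\Rightarrow(3)\Rightarrow(5)\Rightarrow(1)$ via the generator--closure argument and the $R^M$-linearity-from-uniqueness trick, and then declare the leftward equivalences to follow by rerunning everything with left-sprinkled double leaves (the paper likewise leaves the left analogues of \Cref{prop:AL=PF} and \Cref{cor.ST=LT} to the reader). Your anticipated obstacle---verifying that the localization lemmas of \S\ref{sec:lowerterms} transfer to the left-sprinkled setting---is exactly the content the paper sweeps under ``left to the reader.''
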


\begin{proof}
First, we observe that polynomial forcing holds for $f \in R^M$. Clearly $1 \ot f = f \ot 1$. Moreover, $\at \subset W_M$ so $\mi{\at}(f) = f$.

That \eqref{AL} implies \eqref{ALc} is clear.

Suppose that \eqref{ALc} holds. By Proposition \ref{prop:AL=PF}, $\PF$ forcing holds for all $c_i$. By Corollary \ref{cor.ST=LT}, $\PF$-forcing for $c_i$ is equivalent to polynomial forcing for $c_i$. By Lemma \ref{lem:polyforceaddmult}, the subset of $R^J$ consisting of those $f$ for which polynomial forcing holds is a subring. As explained above, this subring includes $R^M$, so if it includes $\{c_i\}$ then it must be all of $R^J$. In this way, \eqref{ALc} implies \eqref{PF}.

Suppose that \eqref{PF} holds. Once again, Proposition \ref{prop:AL=PF} and Corollary \ref{cor.ST=LT} imply that, for each $f \in R^J$, a rightward atomic Leibniz rule holds for $f$, with the elements $T_q(f) \in R^{\rightred(q)}$ being unique. To prove that a rightward atomic Leibniz rule holds, it remains to prove that the operators $T_q \co R^J \to R^{\rightred(q)}$ are $R^M$-linear. We do this below, finishing the proof that \eqref{PF} implies \eqref{AL}.

Let $g \in R^M$. Multiplying both sides of equation \eqref{atomicbuttensor'} on the left by $g$, and pulling $g$ into various $R^M$-linear operators (namely $\pa_{\at}$ and $\mi{\at}$ and $\pa_{\ma{q} w_J^{-1}}$), we obtain
\begin{align} \nonumber \pa_\at(gf \cdot \Delta^J_{M,(1)}) & \ot \Delta^J_{M,(2)} = \\
	&\nonumber \mi{\at}(gf) \cdot \pa_\at(\Delta^J_{M,(1)}) \ot \Delta^J_{M,(2)} + \sum_{q<\at} \pa_{\ma{q}w_J^{-1}}(gT_q(f) \cdot \Delta_{M,(1)}^J) \ot \Delta_{M,(2)}^J \end{align}
This is exactly \eqref{atomicbuttensor'} with $gf$ replacing $f$, except that $gT_q(f)$ appears instead of $T_q(gf)$. 
By uniqueness, we deduce that $T_q(gf) = g T_q(f)$. 

We have thus shown the equivalence of \eqref{AL},\eqref{ALc} and \eqref{PF}.
A similar argument will imply the equivalence of \eqref{lAL} and \eqref{lALc} and \eqref{PF}, and the uniqueness of $T'_q$. This similar argument replaces $\DLL_r(q,b)$ with $\DLL_l(q,b)$, using Proposition~\ref{prop.DLLatomleft} and  Lemma~\ref{lem.circleevleft}. The left analogue of the remaining arguments (e.g. Proposition \ref{prop:AL=PF} and Corollary \ref{cor.ST=LT}) is left to the reader.
\end{proof}

\begin{rem} The intermediate conditions \eqref{ALc} and \eqref{lALc} do not play a significant role in the proof. We have included them to make it easier to prove the atomic Leibniz rule by establishing it on a set of generators. \end{rem}

\subsection{Polynomial forcing for general cosets} \label{subsec:polyforcegeneral}

%\BE{changed from module-theoretic to endomorphism-theoretic}

Now let $p$ be an arbitrary $(I,J)$-coset, with a reduced expression $I_{\bullet}$.
We wish to avoid the technicalities of changing the reduced expression $I_{\bullet}$ in this paper. Instead we focus on the special case when $I_{\bullet}$ is an \emph{atomic-factored reduced expression}, i.e. it has the following form:
\begin{equation} \label{specialexpression} I_{\bullet} = [[I \supset \leftred(p)]] \circ I'_{\bullet} \circ [[\rightred(p) \subset J]] \end{equation}
where $I'_{\bullet}$ is an \emph{atomic reduced expression} (see below) for $p^{\core}$. 

An atomic reduced expression for a core coset $p^{\core}$ is a reduced expression of the form 
\begin{equation} \label{atomicrex} I'_{\bullet} = [[\leftred(p) = N_0 \subset M_1 \supset N_1 \subset \cdots \subset M_m \supset N_m = \rightred(p)]], \end{equation}
where each $[[N_i \subset M_{i+1} \supset N_{i+1}]]$ is a reduced expression for an atomic coset $\at_{i+1}$. In particular, $p = \at_1.\at_2.\cdots . \at_m$. Any core coset has an atomic reduced expression, see \cite[Cor. 2.17]{KELP3} and thus any coset has an atomic-factored reduced expression by \cite[Proposition 4.28]{EKo}.

%Note that $I'_{\bullet}$ is actually a single-step expression. 

We have
\begin{equation} \label{BSIexplicit} \BS(I_{\bullet}) = R^{\leftred(p)} \ot_{R^{M_1}} R^{N_1} \ot_{R^{M_2}} \cdots \ot_{R^{M_m}} R^{\rightred(p)} \end{equation}
viewed as an $(R^I,R^J)$-bimodule. Meanwhile, $\BS(I'_{\bullet})$ is the same abelian group,
but is viewed as an $(R^{\leftred(p)},R^{\rightred(p)})$-bimodule.  There is an action of each $R^{N_i}$ on $\BS(I_{\bullet})$ by multiplication in the $i$-th tensor factor of  \eqref{BSIexplicit}. Indeed, this induces an injective map
\begin{equation}\label{BSIactsonitself} \BS(I_{\bullet}) \to \End(\BS(I_{\bullet}))
\end{equation}
which is not surjective in general.

An arbitrary reduced expression for $p$ might never factor through the subset $\leftred(p)$ or $\rightred(p)$. The first advantage of an atomic-factored expression is that there is an obvious action of $R^{\leftred(p)}$ on $\BS(I_{\bullet})$ by left-multiplication, and an obvious action of $R^{\rightred(p)}$ by right-multiplication. 
The goal is to prove that these two actions agree up to a twist by $\mi{p}$, modulo lower terms.

We denote by $\Id_{I_\bullet}$ the identity morphism of $\BS(I_\bullet)$.
\begin{defn}\label{def:generalatomic} Let  $I_{\bullet}$ be an atomic-factored reduced expression as in \eqref{specialexpression}. We say that \emph{polynomial forcing} holds for $I_{\bullet}$  if for all $f \in R^{\rightred(p)}$, within $\End(\BS(I_{\bullet}))$ as described in \eqref{BSIexplicit} and \eqref{BSIactsonitself}, we have 
%\begin{equation} \label{eq:polyforcingfg} \mi{p}(f) \ot 1 \ot \cdots \ot 1  \equiv  1 \ot \cdots \ot 1 \ot f \text{ modulo } \End_{< p}(\BS(I_{\bullet})). \end{equation}
\begin{equation} \label{eq:polyforcingfg} \mi{p}(f) \cdot \Id_{I_\bullet}  \equiv  \Id_{I_\bullet}\cdot f \text{ modulo } \End_{< p}(\BS(I_{\bullet})). \end{equation}
We say that \emph{polynomial forcing} holds for a double coset $p$ if it holds for all atomic-factored reduced expressions $I_\bullet$  satisfying $I_\bullet \expr p$.\end{defn}

This definition generalizes Definition \ref{defn:polyforceforp} because atomic cosets have only one reduced expression.

Let $p$ be an arbitrary $(I,J)$-coset.  Since $\leftred(p) \subset I$, there is an inclusion of rings $R^I \subset R^{\leftred(p)}$, and $R^{\leftred(p)}$ is naturally an $R^I$-module. Similarly, $R^{\rightred(p)}$ is an $R^J$-module. If $f \in R^{\rightred(p)}$ then $\mi{p}(f) \in R^{\leftred(p)}$. We can identify the rings $R^{\leftred(p)}$ and $R^{\rightred(p)}$ via $\mi{p}$. In this way, $R^{\leftred(p)}$ becomes an $(R^I, R^J)$-bimodule.

\begin{defn} Let $p$ be an $(I,J)$-coset. The \emph{standard bimodule associated to $p$}, denoted $R_p$, is 
$R^{\leftred(p)}$ as a left $R^I$-module. If $f \in R^J$ and $m \in R_p$ then
\begin{equation} m \cdot f := \mi{p}(f) m. \end{equation}
\end{defn}

We identify $R_p$ with either $R^{\leftred(p)}$ (with right action twisted) or $R^{\rightred(p)}$ (with left action twisted), as is more convenient.

Let $\quot:\End(\BS(I_\bullet))\to \End(\BS(I_{\bullet})) / \End_{< p}(\BS(I_{\bullet}))$ denote the quotient map.

\begin{lem}\label{lem:quotientmap} Let $p$ be a core $(I,J)$-coset and let $I_\bullet$ be a reduced expression for $p$. The bimodule map 
\begin{equation}\label{standardasquotientmorphisms} R_p \to \End(\BS(I_{\bullet})) / \End_{< p}(\BS(I_{\bullet})), \qquad  1 \mapsto \quot(\Id_{I_\bullet}) \end{equation} 
is well-defined if and only if polynomial forcing holds for $I_{\bullet}$. 
\end{lem}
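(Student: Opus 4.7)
The proof amounts to carefully unpacking the bimodule structure on $R_p$ and observing that well-definedness of the proposed map reduces, tautologically, to the polynomial forcing congruence. There is no substantive obstacle; the only real content is lining up the definitions, and the main thing to track is which side of the tensor gets twisted by $\mi{p}$.

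\textbf{Step 1: structure of $R_p$.} Since $p$ is a core coset, $\leftred(p) = I$ and $\rightred(p) = J$, so by definition $R_p = R^I$ as a left $R^I$-module, with right $R^J$-action given by $m \cdot f = \mi{p}(f) \cdot m$. In particular, $R_p$ is cyclic as a left $R^I$-module, generated by $1$, and the right action is completely encoded in the single identity $1 \cdot f = \mi{p}(f)$ for $f \in R^J$.

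\textbf{Step 2: reduce well-definedness to a single congruence.} Write $B := \End(\BS(I_\bullet))/\End_{< p}(\BS(I_\bullet))$. Since $R_p$ is cyclic as a left $R^I$-module, specifying $\phi\co R_p\to B$ with $\phi(1) = \quot(\Id_{I_\bullet})$ and requiring left $R^I$-linearity uniquely determines a left $R^I$-module map, namely $\phi(r) = \quot(r\cdot \Id_{I_\bullet})$ for $r \in R^I$. Whether this $\phi$ extends to a \emph{bi}module map is equivalent to whether it is right $R^J$-linear. Since left and right actions commute on both sides, it suffices to check right linearity on the cyclic generator $1 \in R_p$, i.e.\ to check that
\[ \phi(1 \cdot f) = \phi(1) \cdot f \quad\text{for all } f\in R^J. \]

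\textbf{Step 3: identify the equation with polynomial forcing.} The left-hand side equals $\phi(\mi{p}(f)) = \mi{p}(f)\cdot \phi(1) = \quot\bigl(\mi{p}(f)\cdot \Id_{I_\bullet}\bigr)$, using the definition of $\phi$ and the left action on $B$. The right-hand side equals $\quot(\Id_{I_\bullet})\cdot f = \quot(\Id_{I_\bullet}\cdot f)$. Thus the required equation is
\[ \mi{p}(f)\cdot \Id_{I_\bullet}\ \equiv\ \Id_{I_\bullet}\cdot f \quad \text{mod } \End_{<p}(\BS(I_\bullet))\]
for all $f \in R^J = R^{\rightred(p)}$, which is exactly \eqref{eq:polyforcingfg}, i.e.\ polynomial forcing for $I_\bullet$.

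\textbf{Step 4: conclude both directions.} If polynomial forcing holds, then the computation above shows the map defined by $\phi(r) = \quot(r\cdot \Id_{I_\bullet})$ is a well-defined bimodule map sending $1 \mapsto \quot(\Id_{I_\bullet})$. Conversely, if such a bimodule map exists, its value on $\mi{p}(f) = 1\cdot f$ must equal both $\quot(\mi{p}(f)\cdot \Id_{I_\bullet})$ (by left linearity) and $\quot(\Id_{I_\bullet}\cdot f)$ (by right linearity), forcing the congruence above. The hard part, if any, is purely notational: remembering that for a core coset the identification $R_p\cong R^{\leftred(p)} = R^I$ puts the $\mi{p}$-twist on the right action, so that right $R^J$-linearity of $\phi$ translates precisely into left-multiplication by $\mi{p}(f)$ agreeing with right-multiplication by $f$ modulo lower terms.
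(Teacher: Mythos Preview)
Your proof is correct and takes essentially the same approach as the paper's: both reduce well-definedness of the bimodule map on the cyclic generator $1$ to the polynomial forcing congruence \eqref{eq:polyforcingfg}. The only cosmetic difference is that you identify $R_p$ with $R^{\leftred(p)} = R^I$ (right action twisted), while the paper's proof writes $R_p = R^J$ (left action twisted); both identifications are explicitly sanctioned in the paper and lead to the same computation.
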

\begin{proof} 
 
The right action of $f \in R^J$ on $1 \in R^J = R_p$ yields $f \in R^J$, and the right action on $\quot(\Id_{I_\bullet})$ yields $\quot(\Id_{I_\bullet} \cdot f)$. The left action of $\mi{p}(f) \in R^I$ on $1 \in R^J = R_p$ yields $f \in R^J$, and the left action on $\quot(\Id_{I_\bullet})$ yields $\quot(\mi{p}(f)\cdot  \Id_{I_\bullet})$. These agree if and only if the bimodule map is well-defined, and if and only if \eqref{eq:polyforcingfg} holds. 
\end{proof}

% \begin{rem} There is one minor point we have omitted in the proof above: that $\Id_{I_{\bullet}} \notin \End_{< p}(\BS(I_{\bullet}))$, so that the quotient map is nonzero. This follows from a simple argument with degrees, and is a standard technique in the proof of the Soergel(-Williamson) categorification theorem, see \cite[Lemma 5.26]{GBM}. Since we have ignored gradings in this paper, we omit the argument.\LP{I'm not sure if this ``simple argument'' works when we don't have a SW-realization. Should we just remove this remark?}\NL{I think that we should keep the remark and in Lemma \ref{lem:quotientmap} add the hypothesis that we are in an SW-realization.}\LP{But I would rather keep the conclusion at the end of the section as it is}\NL{But do you know an argument to say that $\Id_{I_{\bullet}} \notin \End_{< p}(\BS(I_{\bullet}))$ for almost SW-realizations? }\LP{No. But even if the map is not isomorphism, the lemma still shows the equivalence with polynomial forcing}\NL{ Let us discuss tomorrow.}
% \end{rem}

In conclusion, we have shown the equivalence of three ideas (for almost SW-realizations) for an atomic coset $\at$: the well-definedness of the morphism \eqref{standardasquotientmorphisms} when $p=\at \expr I_\bullet$, atomic polynomial forcing, and the atomic Leibniz rule.

% \NL{is it true for almost SW?}\LP{It is true if we change isomorphism with morphism}\BE{THough I guess we haven't shown general polynomail forcing for almost SW. Do we want to just remove this paragraph?}\LP{True. What if I move it this to the previous section?} \BE{ok, but still need to change it to SW not almost-SW, which I have just done above.}\LP{no, I really meant it to be almost SW. For SW they are a theorem, so it is not so interesting that they are equivalent.} \BE{Ah, I get it. Ok!} \LP{I move it before Theorem 5.10. Maybe we can add:} 

For SW-realizations, \eqref{standardasquotientmorphisms} is an isomorphism by  the theory of singular Soergel bimodules. We can thus prove one of our main theorems.

\begin{thm} \label{thm:main} For an SW-realization, atomic polynomial forcing and atomic Leibniz hold. Moreover, the operators $T_q,T_q'$ in the Leibniz formulas are unique. \end{thm}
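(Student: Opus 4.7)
The plan is to reduce the statement to atomic polynomial forcing for SW-realizations, and then invoke Williamson's theory of standard filtrations for the bimodule $B_\at$.

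Since every SW-realization is automatically an almost-SW realization (take $\ffk = \Bbbk$, and all the finiteness hypotheses are vacuous), Theorem \ref{thm.AL=PF} applies. That theorem asserts the equivalence of atomic polynomial forcing with both the rightward and leftward atomic Leibniz rules, together with the uniqueness of the operators $T_q$ and $T'_q$ whenever they exist. Consequently, the full conclusion of Theorem \ref{thm:main}---both the existence of the Leibniz formulas and the uniqueness of their operators---follows immediately once atomic polynomial forcing has been verified.

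To establish atomic polynomial forcing for an atomic $(I,J)$-coset $\at$, I would invoke Williamson's theorem on standard filtrations of singular Bott-Samelson bimodules \cite[Theorem 6.4]{SingSb}. Applied to $B_\at = R^I \ot_{R^M} R^J$, this supplies a surjective bimodule map $\pi \co B_\at \twoheadrightarrow R_\at$ sending $1 \ot 1$ to $1$, whose kernel coincides with $\End_{<\at}(B_\at)$ under the identification $\End(B_\at) \cong B_\at$ from \eqref{eq:endatomic}. For $f \in R^J$, the twisted right action on $R_\at$ gives $\pi(1 \ot f) = 1 \cdot f = \mi{\at}(f)$, while $\pi(\mi{\at}(f) \ot 1) = \mi{\at}(f) \cdot 1 = \mi{\at}(f)$. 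Therefore $1 \ot f - \mi{\at}(f) \ot 1$ lies in $\ker \pi = \End_{<\at}(B_\at)$, which is precisely the content of atomic polynomial forcing.

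The main obstacle is reconciling Williamson's filtration with the bookkeeping of this paper. Concretely, one must verify that the top subquotient of the standard filtration of $B_\at$ is identified with the standard bimodule $R_\at$, and that $1 \ot 1$ maps to a generator of that quotient. The former is essentially tautological since $\at$ is maximal among the $(I,J)$-cosets in the support of $B_\at$; the latter follows because any nonzero bimodule map $B_\at \to R_\at$ must send the cyclic generator $1 \ot 1$ to a nonzero (hence generating) element of the rank-one standard bimodule. Once these identifications are in place, combining them with Corollary \ref{cor:pfisltoverSW} and Theorem \ref{thm.AL=PF} completes the proof.
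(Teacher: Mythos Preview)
Your proposal is correct and follows essentially the same route as the paper: use Williamson's standard filtration theory to exhibit a bimodule surjection $B_\at \twoheadrightarrow R_\at$ with kernel equal to $\End_{<\at}(B_\at)$, deduce atomic polynomial forcing, and then invoke Theorem~\ref{thm.AL=PF}. The paper is more precise about the bookkeeping you flag as the ``main obstacle'': rather than citing \cite[Theorem 6.4]{SingSb} alone, it uses \cite[Theorems 7.9, 7.10]{SingSb} together with \cite[Lemma 3.31, Theorem 3.30]{KELP2} to identify the support-filtration kernel $\Gamma_{<\at}B_\at$ with $\End_{<\at}(B_\at)$ and the top quotient with $R_\at$ (the latter using indecomposability of $B_\at$), and then appeals to Lemma~\ref{lem:quotientmap} rather than computing $\pi$ directly.
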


\begin{proof}
Assume  $\at\expr [I,M,J]$ is an atomic coset. Then $B_\at\cong \BS([I,M,J])$.

Recall from \cite[\S 4.5]{SingSb} the definition of the submodule $\Gamma_{<\at}B_\at$ of elements supported on lower cosets.
By \cite[Lemma 3.31]{KELP2} we have a short exact sequence 
\begin{equation}\label{sesBa}0\to \End(B_\at,\Gamma_{<\at}B_\at)\to \End(B_\at)\to \Hom(B_\at,B_\at/\Gamma_{<\at}B_\at)\to 0\end{equation}
and, by \cite[Theorem 3.30]{KELP2}, the first term in \eqref{sesBa} is isomorphic to $\End_{<\at}(B_\at)$.
Moreover, since $B_\at$ is indecomposable, by \cite[Theorem 7.10]{SingSb} we have $B_\at/\Gamma_{<\at}B_{\at}\cong R_\at$. The Soergel--Williamson hom formula \cite[Theorem 7.9]{SingSb}
implies that we have an isomorphism $\Hom(B_\at,R_\at)\cong R_\at$\footnote{As in the rest of this paper, we are ignoring degrees here.}
given by $f\mapsto f(1\ot 1)$.
Putting all together, we obtain an isomorphism
\begin{equation}\label{atomicmorphism} \End(B_\at) / \End_{< \at}(B_\at)\stackrel{\sim}{\longrightarrow} R_\at \end{equation}
which sends $\id_\at$ to $1\in R_\at$

By Lemma \ref{lem:quotientmap}, the existence of the isomorphism implies polynomial forcing for $\at$. By Theorem \ref{thm.AL=PF}, this is in turn equivalent to the atomic Leibniz rule for $\at$. Moreover, as proven in \Cref{thm.AL=PF}, the operators $T_q,T_q'$ in the Leibniz formulas are unique. \end{proof}

\begin{rem}
    For a SW-realization, there is an equivalent module-theoretic (rather than morphism-theoretic) version of polynomial forcing. We first recall
 from \cite[Definition 3.23]{KELP3} the filtration on Soergel bimodules
\[ N_{<p}(B)=\sum_{f\in \Hom(\BS(I_\bullet),B),I_\bullet\expr q<p} \im(f).\]
In \cite[Proposition 3.25]{KELP3} we have showed that this coincides with the support filtration $\Gamma_{<p}$ introduced in \cite{SingSb}.

    Let $\at\expr[I,M,J]$ be an atomic coset. We say that \emph{(module-theoretic) polynomial forcing} holds for $\at$ and $f$ if 
    \begin{equation}\label{moduletheoretic}\mi{\at}(f)\ot 1- 1  \ot f\in N_{<\at}(B_\at).
    \end{equation}

    There is an isomorphism $B_\at\cong \End(B_\at)$, where $b\ot b'\in B_\at$ is sent to multiplication by $b\ot b'$. 
Moreover, by \cite[Theorem 3.30]{KELP3} we have
   \[\Hom(B_\at,N_{<\at}B_\at)\cong \End_{<\at}(B_\at).\]
   Hence, \eqref{moduletheoretic} holds if and only if multiplication  by $\mi{\at}(f)\ot 1- 1 \ot  f$ induces a morphism in $\End_{<\at}(B_\at)$, that is, if an only if (morphism-theoretic) polynomial forcing holds for $f$. 
\end{rem}

\subsection{Polynomial forcing: atomic and general} 

% \BE{new paragraph. There are no longer any assumptions of realization in this section!! Am I right?}\HK{currently there are, and \cite{KELP2} has assumptions; I do not know if prop 3.7 there needs them but we use them}\LP{To do it in general we would need to show that $\BS(I_\bullet)$ decomposes into smaller things when $I_\bullet$ is not reduced. We have that only for SW-realizations I think? I don't think it is important and I would just assume SW here}

In the diagrammatic category, we intend to use the atomic Leibniz rule to prove polynomial forcing, and not vice versa. In that context, polynomial forcing is to be interpreted as the morphism-theoretic statement that \eqref{standardasquotientmorphisms} is a well-defined morphism, when $I_{\bullet}$ is an atomic-factored reduced expression. The goal of this section is to prove that atomic polynomial forcing implies general polynomial forcing. 

%\BE{revamped this somewhat to be morphism-theoretic and not module-theoretic} 
In \cite{KELP2}, a compatibility between the Bruhat order and concatenation of reduced expression is proven, which implies the following result.
\begin{prop}[{\cite[Prop. 3.7]{KELP2}}] \label{prop:lowertermslocal}Let $P_{\bullet} \expr p$ and $Q_{\bullet} \expr q$ and $R_{\bullet} \expr r$ be reduced expressions such that $P_{\bullet} \circ Q_{\bullet} \circ R_{\bullet} \expr p.q.r$ is reduced. Then
\begin{equation} \id_{P_{\bullet}} \ot \End_{< q}(Q_{\bullet}) \ot \id_{R_{\bullet}} \subset \End_{< p.q.r}(P_{\bullet} \circ Q_{\bullet} \circ R_{\bullet}). \end{equation} \end{prop}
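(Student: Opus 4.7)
The plan is to unpack the generated-ideal description of $\End_{<q}(Q_{\bullet})$ and then combine two ingredients: Bruhat-order monotonicity under reduced composition, and the principle that any Bott--Samelson of a possibly non-reduced multistep expression factors, up to the ideal of lower terms, through Bott--Samelsons of reduced expressions.

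First, by the definition of the generated ideal, any $\phi \in \End_{<q}(Q_{\bullet})$ is a $\Bbbk$-linear combination of morphisms of the form $\beta \circ \alpha$, where $\alpha \colon \BS(Q_{\bullet}) \to \BS(M_{\bullet})$ and $\beta \colon \BS(M_{\bullet}) \to \BS(Q_{\bullet})$, with $M_{\bullet}$ a reduced expression for some coset $q' < q$. Since the claim is linear, I would reduce to one such summand. Then
\[\id_{P_{\bullet}} \ot \phi \ot \id_{R_{\bullet}} = (\id_{P_{\bullet}} \ot \beta \ot \id_{R_{\bullet}}) \circ (\id_{P_{\bullet}} \ot \alpha \ot \id_{R_{\bullet}}),\]
which factors through $\BS(P_{\bullet} \circ M_{\bullet} \circ R_{\bullet})$.

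Second, I would verify that the multistep expression $P_{\bullet} \circ M_{\bullet} \circ R_{\bullet}$ expresses an $(I,J)$-coset $x$ with $x < p.q.r$. Using the characterization $x < p.q.r \iff \mi{x} < \mi{p.q.r}$ from \cite[Thm.~2.16]{KELP2}, combined with the fact that $P_{\bullet} \circ Q_{\bullet} \circ R_{\bullet}$ is reduced (so $\mi{p.q.r} = \mi{p}\mi{q}\mi{r}$ with matching lengths), it suffices to observe that $\mi{x} \leq \mi{p}\mi{q'}\mi{r}$ in Bruhat order (from subword extraction applied to the concatenated reduced expressions) and that $\mi{q'} < \mi{q}$ forces $\mi{p}\mi{q'}\mi{r} < \mi{p}\mi{q}\mi{r}$ by a standard subword argument invoking the reducedness of the outer composition. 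Combining these inequalities gives $\mi{x} < \mi{p.q.r}$, and hence $x < p.q.r$.

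The main obstacle lies in the last step: bridging the gap between factoring through the potentially non-reduced Bott--Samelson $\BS(P_{\bullet} \circ M_{\bullet} \circ R_{\bullet})$ and factoring through Bott--Samelsons of \emph{reduced} expressions, as required by Definition~\ref{def.lowerterm}. One would need the following lemma: for any multistep expression $X_{\bullet}$ expressing a coset $x$, the identity $\id_{\BS(X_{\bullet})}$ lies in the ideal generated by identity maps of $\BS(N_{\bullet})$ as $N_{\bullet}$ ranges over reduced expressions of cosets $\leq x$. This is plausibly established by induction on the defect of $X_{\bullet}$, iteratively applying local rewriting moves (squashing operations in the category $\Frob$) that replace non-reduced sub-compositions by reduced ones modulo morphisms already factoring through strictly lower cosets. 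Applied to $X_{\bullet} = P_{\bullet} \circ M_{\bullet} \circ R_{\bullet}$, this rewrites our morphism as a sum of morphisms factoring through reduced expressions of cosets $\leq x < p.q.r$, yielding the desired membership in $\End_{<p.q.r}(\BS(P_{\bullet} \circ Q_{\bullet} \circ R_{\bullet}))$.
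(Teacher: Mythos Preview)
The paper does not supply a proof; the proposition is quoted from \cite[Prop.~3.7]{KELP2}, prefaced by the remark that it follows from ``a compatibility between the Bruhat order and concatenation of reduced expressions'' proven there. So there is no in-paper argument to compare against, but your attempt has a genuine error and a serious acknowledged gap.

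The error is the identity $\mi{p.q.r} = \mi{p}\,\mi{q}\,\mi{r}$, which is false for reduced compositions of double cosets in general. Take $W = \langle s \rangle$: let $p$ be the unique $(\emptyset,\{s\})$-coset and $q$ the unique $(\{s\},\emptyset)$-coset, so $\mi{p} = \mi{q} = e$; yet $p.q$ is the atomic $(\emptyset,\emptyset)$-coset $\{s\}$ with $\mi{p.q} = s$. Minimal elements multiply well only for compositions of \emph{core} cosets (this is \cite[Lem.~2.10]{KELP3}, invoked explicitly in the proof of Theorem~\ref{thm:polyforcinggeneral}). Consequently your subword comparison of $\mi{p}\,\mi{q'}\,\mi{r}$ against $\mi{p}\,\mi{q}\,\mi{r}$ has no foundation: neither product need be reduced in $W$, and neither computes $\mi{x}$ or $\mi{p.q.r}$. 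The monotonicity you actually need --- that the coset expressed by $P_\bullet \circ M_\bullet \circ R_\bullet$ is strictly below $p.q.r$ whenever $q' < q$ --- is precisely the Bruhat compatibility that \cite{KELP2} establishes (via maximal elements and the $*$-product on cosets, not minimal elements).

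Your fourth step is not a minor loose end either. Showing that $\id_{\BS(X_\bullet)}$ for a non-reduced $X_\bullet$ lies in the ideal generated by reduced expressions of cosets $\le x$ amounts to knowing that every Bott--Samelson is filtered, in the morphism-ideal sense, by objects indexed by lower cosets. That is part of the structural theory developed in \cite{KELP2} and \cite{KELP4}; it is not something one obtains by ad hoc local rewriting without that machinery already in hand. Both substantive ingredients of your sketch therefore defer to exactly the results being cited.
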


\begin{thm} \label{thm:polyforcinggeneral} Assume an SW-realization. Then  polynomial forcing holds for all double cosets. \end{thm}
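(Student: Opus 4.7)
The plan is to reduce polynomial forcing for a general coset $p$ to atomic polynomial forcing, by pushing polynomials through the tensor factors of an atomic-factored reduced expression one atom at a time, and converting the resulting local lower terms to global ones via Proposition~\ref{prop:lowertermslocal}.

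Fix an atomic-factored reduced expression
\[ I_\bullet = [[I \supset \leftred(p)]] \circ I'_\bullet \circ [[\rightred(p) \subset J]], \qquad I'_\bullet = [[\leftred(p) = N_0 \subset M_1 \supset N_1 \subset \cdots \subset M_m \supset N_m = \rightred(p)]], \]
so that each sub-expression $[[N_{i-1} \subset M_i \supset N_i]]$ realizes an atomic coset $\at_i$, with $p^{\core} = \at_1 . \at_2 . \cdots . \at_m$ as a reduced composition. By Theorem~\ref{thm:main}, atomic polynomial forcing holds for each $\at_i$: for every $h \in R^{N_i}$,
\[ 1 \ot h - \mi{\at_i}(h) \ot 1 \in \End_{<\at_i}(B_{\at_i}). \]

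Given $f \in R^{\rightred(p)}$, I will prove by induction on $i$, decreasing from $m$ to $0$, that in $\End(\BS(I_\bullet))$
\[ \Id_{I_\bullet} \cdot f \equiv \mi{\at_{i+1}}\bigl(\mi{\at_{i+2}}(\cdots \mi{\at_m}(f) \cdots)\bigr) \cdot_i \Id_{I_\bullet} \pmod{\End_{<p}(\BS(I_\bullet))}, \]
where $\cdot_i$ denotes the multiplication action of $R^{N_i}$ in the $i$-th tensor slot of the factorization $\BS(I_\bullet)$. The inductive step uses the identification
\[ \BS(I_\bullet) \cong \BS(L_{\bullet,i}) \ot_{R^{N_{i-1}}} B_{\at_i} \ot_{R^{N_i}} \BS(R_{\bullet,i}), \]
where $L_{\bullet,i}$ and $R_{\bullet,i}$ are the portions of $I_\bullet$ to the left and right of $\at_i$. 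Atomic polynomial forcing for $\at_i$ applied to the middle slot says that the difference of the two sides at the $i$-step is $\Id \ot \phi \ot \Id$ for some $\phi \in \End_{<\at_i}(B_{\at_i})$; by Proposition~\ref{prop:lowertermslocal} this lies in $\End_{<\at_1.\cdots.\at_m}(\BS(I'_\bullet)) = \End_{<p^{\core}}(\BS(I'_\bullet))$, and then, applying Proposition~\ref{prop:lowertermslocal} once more with the outer trivial-coset factors $[[I \supset \leftred(p)]]$ and $[[\rightred(p) \subset J]]$ (whose cosets are identities on $\leftred(p)$ and $\rightred(p)$ respectively), in $\End_{<p}(\BS(I_\bullet))$, as desired.

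Iterating from $i=m$ down to $i=0$ and using that, for a reduced composition, $\mi{p} = \mi{p^{\core}} = \mi{\at_1} . \mi{\at_2} . \cdots . \mi{\at_m}$ (an equality of elements of $W$, hence of operators on $R$), we land at
\[ \Id_{I_\bullet} \cdot f \equiv \mi{p}(f) \cdot \Id_{I_\bullet} \pmod{\End_{<p}(\BS(I_\bullet))}. \]
This is exactly \eqref{eq:polyforcingfg}. Since $I_\bullet$ was an arbitrary atomic-factored reduced expression for $p$, polynomial forcing holds for $p$.

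The main obstacle is bookkeeping: one must check that the lower-term ideal behaves correctly under (i) insertion of a local morphism into the middle of a Bott-Samelson, and (ii) padding by outer factors corresponding to identity cosets. Both are consequences of Proposition~\ref{prop:lowertermslocal}, but care is needed to match Bruhat orders of cosets across the factorization. Everything else is a straightforward iteration of the atomic case, which is already available thanks to Theorem~\ref{thm:main}.
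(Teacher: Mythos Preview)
Your proposal is correct and follows essentially the same approach as the paper: iterate atomic polynomial forcing (Theorem~\ref{thm:main}) across the atoms of an atomic-factored reduced expression, invoking Proposition~\ref{prop:lowertermslocal} to promote local lower terms to global ones, and use $\mi{p} = \mi{\at_1}\cdots\mi{\at_m}$ (the paper cites \cite[Lem.~2.10]{KELP3} for this). The only organizational difference is that the paper separates the core case from the non-core case, whereas you handle both at once by applying Proposition~\ref{prop:lowertermslocal} a second time for the outer identity-coset factors; this is a cosmetic distinction.
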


\begin{proof} We first treat the case where $p = p^{\core}$ is a core $(I,J)$-coset. Consider an atomic reduced expression $I_{\bullet}$ for $p$, yielding atomic cosets $\at_i$ such that $p = \at_1 . \at_2.\cdots . \at_m$. Since $\at_i$ are core cosets, by \cite[Lem. 2.10]{KELP3} we have $\mi{p} = \mi{\at_1} \cdot \mi{\at_2} \cdots \mi{\at_m}$. Now within $\BS(I_{\bullet})$ we have %\LP{edited}
\begin{equation} \Id_{\at_1} \ot \cdots \ot \Id_{\at_{m}}  f \equiv \Id_{\at_1} \ot \cdots \ot \mi{\at_m}(f) \Id_{\at_m} \equiv \ldots \equiv \mi{\at_1}(\cdots(\mi{\at_m}(f))) \Id_{\at_1}\ot \cdots \ot \Id_{\at_m}, \end{equation}
where $\equiv$ indicates equality modulo lower terms.
At each step we applied polynomial forcing for an atomic coset as proved in \Cref{thm:main}, and used Proposition \ref{prop:lowertermslocal} to argue that lower terms for $\at_i$ embed into lower terms for $p$. Thus polynomial forcing holds for $p$.

If $p$ is not a core coset, let $I_{\bullet}$ be a special reduced expression for $p$ as in \eqref{specialexpression}. Polynomial forcing for $p^{\core}$ implies that 
\[ \mi{p}(f) \cdot\Id_{I_\bullet}\equiv \Id_{I_\bullet} \cdot f \]
modulo 
\[ \id_{[[I \supset \leftred(p)]]} \ot \End_{< p^{\core}}(I'_{\bullet}) \ot \id_{[[\rightred(p) \subset J]]}.\]
By Proposition \ref{prop:lowertermslocal}, they are also equivalent modulo $\End_{<p}(I_{\bullet})$, as desired. \end{proof}

The reader familiar with Soergel bimodules might be familiar with the following example, which showcases how atomic polynomial forcing implies the general case. It also relates our new concept of polynomial forcing to the concept previously in the literature.

\begin{ex} Consider the $(\mt, \mt)$-coset $p = \{s\}$ for a simple reflection $s$, and the reduced expression $I_{\bullet} = [\mt,s,\mt]$. Inside $B_s := \BS(I_{\bullet}) = R \ot_{R^s} R$ we have
\begin{equation} 1\ot f - s(f) \ot 1 = \pa_s(f) \cdot \frac{1}{2}(\alpha_s \ot 1 + 1 \ot \alpha_s). \end{equation}
The term on the right hand side is in $\End_{< s}(I_{\bullet})$, a consequence of the so-called \emph{polynomial forcing relation} in the Hecke category, see e.g. \cite[(5.2)]{Soergelcalculus}.

Now consider the $(\mt, \mt)$ coset $p = \{w\}$ for some $w \in W$, and a reduced expression $I_{\bullet} = [\mt, s_1, \mt, s_2, \ldots, \mt, s_d, \mt]$. By applying the polynomial forcing relation for $B_{s_d}$, we see that $1 \ot \cdots \ot 1 \ot f \equiv 1 \ot \cdots \ot s_d(f) \ot 1$ modulo maps which factor through $[\mt, s_1,\mt, \ldots, \mt, s_{d-1}, \mt]$. Continuing, we can apply polynomial forcing for each $B_{s_i}$ to force $f$ across all the tensors, at the cost of maps which factor through subexpressions of $s_1 \cdots s_d$. By the subexpression property of the Bruhat order, such maps consist of lower terms. \end{ex}

\section{Atomic Leibniz rule in type \texorpdfstring{$A$}{A}}\label{s.A}

In this section we explicitly prove condition \eqref{ALc} in Theorem~\ref{thm.AL=PF} in type $A$. We establish this result for $\Bbbk = \Z$ in this section, rather than over a field.  In addition to extending our results over $\Z$, we feel the ability to be explicit in a key example is its own reward.

In order to achieve this, we first prove in \Cref{thm:demazureoncompletedelta} explicitly an atomic Leibniz rule for a specific set of generators when $W_M$ is the entire symmetric group. In \Cref{ssec:connected} we extend our results to the case when $W_M$ is a product of symmetric groups. This handles all atomic cosets in type $A$. %\BE{commented some old text}

% To conclude (in this case) that atomic Leibniz rule holds over $\Z$, we need to prove that it is multiplicative. In \Cref{s.ALPF} this is achieved by showing  that the atomic Leibniz rule is equivalent to polynomial forcing, and this equivalence relied on results in \cite{KELP4} which were only available over a field. In order to prove the equivalence between polynomial forcing and the atomic Leibniz rule, we need to prove \Cref{prop.lowertermdsum} with other methods. This is done in \Cref{prop:smallertermsoverZ} by base change to $\Q$.

\subsection{Notation in type \texorpdfstring{$A$}{A}} \label{subsec:atomicnotation}

We fix notation under the assumption that $W_M$ is an irreducible Coxeter group of type $A$. For $s,t \in M$ we write $\hat{s} := M \setminus \{s\}$ and $\hat{s}\hat{t} := M \setminus \{s,t\}$, etcetera.

Fix $a, b \ge 1$ and let $n = a+b$. Let $W_M = \symm_n$. Let $t = s_a$ and $s = s_b = w_M t w_M$, so that $W_{J} = \symm_a \times \symm_b$ and $W_{I} = \symm_b \times \symm_a$. Let $\at$ be the $(\symm_b \times \symm_a, \symm_a \times \symm_b)$-coset containing $w_M$. The coset $\at$ is depicted as follows, with its minimal element $\mi{\at}$ being the string diagram visible.
\begin{equation} \at = \ig{1}{cosetq0}. \end{equation}
Drawn is the example $a=3$ and $b=5$.

For each $0 \le k \le \min(a,b)$, there is an $(\hat{s},\hat{t})$-coset $q_k$ depicted as follows.
\begin{equation} q_1 = \ig{1}{cosetq1}, \qquad q_2 = \ig{1}{cosetq2}, \qquad q_3 = \ig{1}{cosetq3}. \end{equation}
Then $p = q_0$, and $\{q_k\}_{0 \le k \le \min(a,b)}$ is an enumeration of all the $(\hat{s},\hat{t})$-cosets. The Bruhat order is a total order in this case:
\[ q_0 > q_1 > q_2 > \ldots > q_{\min(a,b)}. \]

The left redundancy subgroup (see \cite[Section 1.2]{KELP3} to see how to calculate redundancies and cores) of $q_k$ is $\symm_k \times \symm_{b-k} \times \symm_{a-k} \times \symm_k$. For brevity, let $\ell := n - k$. Then (with one exception)  $\leftred(q_k) = \hat{b} \hat{k} \hat{\ell} := M \setminus \{b,k,\ell\}$ and $\rightred(q_k) = \hat{a} \hat{k} \hat{\ell}$. The core of $q_k$ is the double coset depicted as
\begin{equation} q_1^{\core} = \ig{1}{coreq1}, \qquad q_2^{\core} = \ig{1}{coreq2}, \qquad q_3^{\core} = \ig{1}{coreq3}. \end{equation}
Note that the core of $q_k$ is itself atomic, except for $k = \min(a,b)$ when the core is an identity coset.

The case $k = a = b = \ell$ is relatively special. We denote this special coset as $q_{a=b}$.  We have $\leftred(q_{a=b}) = \rightred(q_{a=b}) = I = J$. Unlike other $q_i$, $q_{a=b}$ is core. 

% \NL{I don't understand why this is an exception. In this case $a=k=b$ and $l=n-k=2b-b=b$, so $M \setminus \{b,k,\ell\}=M\setminus b=I=J$. And if this is an exception because $b,k,l$ are not all different, then any $q_{\mathrm{min}(a,b)}$ is also an exception.} \BE{ Look at $q_3$ above. It is not core. Whereas $q_{a=b}$ is core. This changes its reduced expression significantly. The example $q_3$ above is a coset containing the identity, but it is not actually an identity morphism, and it has a nontrivial reduced expression. Whereas $q_{a=b}$ is the identity morphism.}
% \NL{I am complaining about the place where you put this sentence. Above you say ``Then (with one exception)  $\leftred(q_k) = \hat{b} \hat{k} \hat{\ell} $'' and I claim that $q_{a=b}$ is not an exception to that. As I explained in the first NL,  $\leftred(q_{a=b})=\hat{b} \hat{k} \hat{\ell}= \hat{a}= I. $}

\begin{rem} In type $A$ the following statement is always true: if $\at$ is atomic and $q < \at$ then $q^{\core}$ is either atomic or an identity coset. We do not know for which atomic cosets $\at$ this property holds in other types. \end{rem}

A reduced expression for $q_k$, which factors through the core, is
\begin{equation}\label{qk} q_k \expr [[\hat{b} \supset \hat{b}{\hat{k}\hat{\ell}} \subset \hat{k}\hat{\ell} \supset \hat{a} \hat{k}\hat{\ell} \subset \hat{a}]]. \end{equation}
The exception is when $a = b = k$, in which case
\begin{equation} q_{a=b} \expr [\hat{b}], \end{equation}
that is, the identity expression of $I = \hat{b}$ is a reduced expression for the length zero coset $q_{a=b}$.

Finally, let us write $y_k = \ma{q_k} w_{J}^{-1}$. Then $\pa_{q_k} = \pa_{y_k}$. Here are examples of $y_k$.
\begin{equation}\label{eq.y_i} y_0 = \ig{1}{y0}, \qquad
y_1 = \ig{1}{y1}, \qquad
y_2 = \ig{1}{y2}, \qquad
y_3 = \ig{1}{y3}. \end{equation}
Of course $y_0 = \mi{\at}$. Meanwhile $y_k$ is obtained from $y_0$ by removing a $k \times k$ square of crossings from the top. In the special case of the coset $q_{a=b}$, we have $y_{a=b} = e$, the identity of $W$.

\subsection{Complete symmetric polynomials}

Fix $a, b \ge 1$ and continue to use the notation from the previous section. The standard action of $\symm_n$ on $\Z^n$ (with the standard choice of roots and coroots) we call the \emph{permutation realization}. Let $R = \Z[x_1, \ldots, x_n]$. All Demazure operators preserve $R$. By a result of Demazure \cite[Lemme 5]{Demazure}, Frobenius surjectivity holds in type $A$ over $\Z$, that is, for any $I\subset M$  we can find $P_I\in R$ such that $\pa_I(P_I)=1$.

Moreover, for any $J\subset I$ the ring $R^I$ is Frobenius over $R^J$ and we can choose dual bases $\Delta^J_{I,(1)}$ and $\Delta^J_{I,(1)}$ accordingly.

It is well-known that the subring $R^{J} = R^{\symm_a \times \symm_b}$ is generated over $R^{\symm_n}$ by the complete symmetric polynomials \[h_i(x_1, \ldots, x_a)=\sum_{1\leq k_1\leq k_2\leq \ldots \leq k_i\leq a}x_{k_1}x_{k_2}\cdots x_{k_i} \] in the first $a$ variables. In this section, we directly prove the atomic Leibniz rule for $\at$ when $f = h_i(x_1, \ldots, x_a)$.

One of the great features of complete symmetric polynomials is their behavior under Demazure operators. For example, we have 
\begin{equation} \pa_3(h_i(x_1, x_2, x_3)) = h_{i-1}(x_1, x_2, x_3, x_4). \end{equation}
As a consequence, $\pa_2 \pa_3(h_i(x_1, x_2, x_3)) = 0$, a fact which is false if $h_i$ is replaced by some general polynomial inside $R^{\symm_3 \times \symm_{n-3}}$. Indeed, the only elements $w \le s_1 s_2 s_3$ for which $\pa_w(h_i(x_1,x_2,x_3)) \ne 0$ are $w = s_3$ and $w = e$. This will simplify the computation considerably.

Below we shall use letters like $X$, $Y$, and $Z$ to denote subsets of $\{1, \ldots, n\}$. We write $h_i(X)$ for the $i$-th complete symmetric polynomial in the variables $x_j$ for $j \in X$.

% \begin{lem}
%     We have 
%     \[ \pa_j(h_i(x_1,\ldots,x_k))=\begin{cases} 0 & \text{if }j\neq k\\
%     h_{i-1}(x_1,\ldots,x_{k+1})&\text{if }j=k.\end{cases}\]
% \end{lem}
% \begin{proof}
% Clearly, $h_i(x_1,\ldots,x_k)$ is $s_j$-invariant for $j\neq k$, so $\pa_j(h_i(x_1,\ldots,x_k))=0$. 

% We have $h_i(x_1,\ldots,x_k)=h_i(x_1,\ldots,x_{k-1})+h_{i-1}(x_1,\ldots,x_k)x_k$. Hence, 
% \[ \pa_k(h_i(x_1,\ldots,x_k))=\pa_k(h_{i-1}(x_1,\ldots,x_k)x_k).\]
% Applying the twisted Leibniz rule, by induction on $k$, we obtain
% \begin{align*}
% \pa_k(h_i(x_1,\ldots,x_k))&=h_{i-2}(x_1,\ldots,x_k,x_{k+1})x_k+s_k(h_{i-1}(x_1,\ldots,x_{k}))\pa_k(x_k)\\
%     &=h_{i-2}(x_1,\ldots,x_k,x_{k+1})x_k+h_{i-1}(x_1,\ldots,x_{k-1},x_{k+1})\\&=h_{i-1}(x_1,\ldots,x_{k+1}).\qedhere
% \end{align*}
% \end{proof}

\begin{lem}
We have
\begin{equation}\label{eq.delh} \pa_j(h_i(X)) =\begin{cases} h_{i-1}(X \cup \{j+1\})&\text{if }j\in X\text{and }j+1\not\in X\\
	-h_{i-1}(X \cup \{j\})&\text{if }j+1\in X\text{and }j\not\in X\\
	0&\text{otherwise.}\end{cases}\end{equation}
\end{lem}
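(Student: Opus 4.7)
The plan is a straightforward case analysis on the membership of $j$ and $j+1$ in the index set $X$, with the nontrivial identity reducing to a single generating-function computation.

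First I would dispose of the two "symmetric" cases. If $j, j+1 \in X$, then $h_i(X)$ is already a symmetric polynomial in $x_j$ and $x_{j+1}$, so $s_j h_i(X) = h_i(X)$ and hence $\partial_j h_i(X) = 0$. If $j, j+1 \notin X$, then $s_j$ fixes every variable appearing in $h_i(X)$, so again $s_j h_i(X) = h_i(X)$ and $\partial_j h_i(X) = 0$. This takes care of the "otherwise" branch of \eqref{eq.delh}.

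For the interesting case $j \in X$, $j+1 \notin X$, I would argue via the classical generating function
\[ H_X(t) := \sum_{i \ge 0} h_i(X) t^i = \prod_{k \in X} \frac{1}{1-x_k t}. \]
Setting $Y = X \setminus \{j\}$ and writing $H_X(t) = (1-x_j t)^{-1} H_Y(t)$, together with $s_j H_X(t) = (1-x_{j+1} t)^{-1} H_Y(t)$, one computes
\[ H_X(t) - s_j H_X(t) \;=\; \frac{(x_j - x_{j+1})\, t}{(1-x_j t)(1-x_{j+1} t)} \, H_Y(t). \]
Dividing by $x_j - x_{j+1}$ and recognizing $(1-x_j t)^{-1}(1-x_{j+1}t)^{-1} H_Y(t) = H_{X \cup \{j+1\}}(t)$, one obtains
\[ \sum_{i \ge 0} \partial_j h_i(X)\, t^i \;=\; t \cdot H_{X \cup \{j+1\}}(t), \]
so matching coefficients gives $\partial_j h_i(X) = h_{i-1}(X \cup \{j+1\})$.

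The remaining case $j+1 \in X$, $j \notin X$, is handled by the same generating-function computation with the roles of $j$ and $j+1$ swapped; the only change is that the numerator becomes $(x_{j+1} - x_j) t$, producing an overall sign, so $\partial_j h_i(X) = -h_{i-1}(X \cup \{j\})$. There is no real obstacle here: the whole lemma rests on the partial-fraction identity in the second display, and the rest is bookkeeping. I would mention this identity as the key step and keep the writeup compact.
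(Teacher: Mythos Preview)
Your argument is correct. The generating-function identity
\[
H_X(t)-s_jH_X(t)=\frac{(x_j-x_{j+1})t}{(1-x_jt)(1-x_{j+1}t)}\,H_Y(t)
\]
does the job in one stroke, and the remaining two cases are handled exactly as you say.

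The paper takes a different route for the case $j\in X$, $j+1\notin X$: it uses the recursion $h_i(X)=h_i(X\setminus\{j\})+h_{i-1}(X)\,x_j$, applies the twisted Leibniz rule to $\partial_j(h_{i-1}(X)\,x_j)$, and invokes induction on $i$ to identify $\partial_j(h_{i-1}(X))$. For the case $j+1\in X$, $j\notin X$ the paper uses $\partial_j(s_j f)=-\partial_j(f)$ rather than redoing the computation. Your generating-function approach is cleaner and treats all $i$ simultaneously; the paper's approach stays closer to the Demazure-operator calculus used throughout (twisted Leibniz plus induction), which is thematically consistent but slightly more laborious. Both are entirely elementary.
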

\begin{proof}
Clearly, $h_i(X)$ is $s_j$-invariant if both $j$ and $j+1$ are inside or outside $X$, so $\pa_j(h_i(X))=0$.
Assume now that $j\in X$ and $j+1\not \in X$.
We have $h_i(X)=h_i(X\setminus \{j\})+h_{i-1}(X)x_j$. Hence, 
\[ \pa_j(h_i(X))=\pa_j(h_{i-1}(X)x_j).\]
Applying the twisted Leibniz rule, by induction on $|X|$, we obtain
\begin{align*}
\pa_j(h_i(X))&=h_{i-2}(X\cup \{j+1\})x_j+s_j(h_{i-1}(X))\pa_j(x_j)\\
&=h_{i-2}(X\cup \{j+1\})x_j+h_{i-1}(X\cup \{j+1\}\setminus \{j\})\\&=h_{i-1}(X\cup \{j+1\}).
\end{align*}

The case where $j+1 \in X$ and $j \notin X$ follows because $\pa_j(s_j(f)) = -\pa_j(f)$, so we have 
\[ \pa_j(h_i(X)) =-\pa_j (h_{i}(X\cup \{j\}\setminus \{j+1\}))=-h_{i-1}(X\cup \{j\}).\qedhere\]
\end{proof}

% If $j \in X$ and $j+1 \notin X$ then
% \begin{equation} \pa_j(h_i(X)) = h_{i-1}(X \cup \{j+1\}). \end{equation}
% Similarly, if $j+1 \in X$ and $j \notin X$ then
% \begin{equation} \pa_j(h_i(X)) = -h_{i-1}(X \cup \{j\}).\end{equation}

\begin{thm} \label{thm:demazureoncompletedelta} Use the notation of \S\ref{subsec:atomicnotation}. Fix $a, b \ge 1$, and let  $n = a+b.$ Let $X = \{1, \ldots, a\}$ and $Y = \{n,n-1,\ldots,n+1-a\}$. Recall $y_0, y_1 \in \symm_n$ from \eqref{eq.y_i}.

Then for any $i \ge 0$ and any $g \in R^{\symm_a \times \symm_b}=R^J$ we have
\begin{equation} \label{demazureoncomplete} \pa_{y_0}(h_i(X) \cdot g) = h_i(Y) \cdot \pa_{y_0}(g) + \pa_{y_1}(h_{i-1}(X \cup n) \cdot g). \end{equation}
\end{thm}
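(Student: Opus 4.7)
The plan is to prove the identity directly via induction on $a$, using a carefully chosen reduced expression for $y_0$ together with the clean interaction between complete symmetric polynomials and Demazure operators given by \eqref{eq.delh}. I would use the reduced word $y_0 = u_1 u_2 \cdots u_a$ with $u_j := s_{b+j-1} s_{b+j-2} \cdots s_j$, so that $\pa_{y_0} = \pa_{u_1} \circ \pa_{u_2} \circ \cdots \circ \pa_{u_a}$. Setting $v := u_2 u_3 \cdots u_a$, one checks that $v$ fixes position $1$ (every $u_j$ for $j \ge 2$ involves only $s_2, s_3, \ldots$), so $\pa_v$ is $\Bbbk[x_1]$-linear; moreover, after relabeling $\{2, \ldots, n\}$ as $\{1, \ldots, n-1\}$, $\pa_v$ becomes the analog of $\pa_{y_0}$ for the $(a-1, b)$-Grassmannian.

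For the base case $a = 1$, we have $y_0 = s_{n-1} s_{n-2} \cdots s_1$ and $h_i(X) = x_1^i$. I would proceed by a secondary induction on $i$: apply $\pa_{s_1}(x_1^i g) = h_{i-1}(x_1, x_2)\,g + x_2^i\, \pa_{s_1}(g)$, then successively apply $\pa_{s_2}, \pa_{s_3}, \ldots, \pa_{s_{n-1}}$. The hypothesis $g \in R^J$ (symmetric in $x_2, \ldots, x_n$), combined with the fact that $s_1$ commutes with $s_j$ for $j \ge 3$, implies that $\pa_{s_1}(g)$ is also $s_j$-invariant for $j \ge 3$, so nearly all terms of the iterated twisted-Leibniz expansion vanish. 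The surviving terms reassemble into $x_n^i\, \pa_{y_0}(g) + \pa_{y_1}(h_{i-1}(x_1, x_n)\, g)$.

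For the inductive step, I would apply $\pa_v$ first to $h_i(X)\, g$, using the decomposition $h_i(X) = \sum_{k=0}^{i} x_1^k\, h_{i-k}(X \setminus \{1\})$ and the $\Bbbk[x_1]$-linearity of $\pa_v$ to factor out the $x_1^k$. The inductive hypothesis then gives a two-term expansion of each $\pa_v(h_{i-k}(X \setminus \{1\})\, g)$. Finally, I would apply $\pa_{u_1} = \pa_{s_b} \pa_{s_{b-1}} \cdots \pa_{s_1}$, which is itself a small Grassmannian Demazure operator for which $\pa_{u_1}(x_1^k f)$ can be written explicitly (essentially as $h_{k-1}(x_1, \ldots, x_{b+1}) \cdot f$ when $f$ is suitably invariant), and collect the pieces to arrive at the claimed two-term formula.

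The main obstacle will be the bookkeeping when combining the inductive hypothesis with $\pa_{u_1}$: the hypothesis introduces multiple lower-order terms, and I must show that after applying $\pa_{u_1}$ they collapse into the single correction term $\pa_{y_1}(h_{i-1}(X \cup \{n\})\, g)$. This relies on identifying $y_1$ precisely via the factorization $y_0 = s_b \cdot y_1$, which is a reduced product since $s_b$ is the unique left descent of $y_0$, and on verifying that all intermediate contributions from smaller cosets $q_k$ with $k \ge 2$ either cancel among themselves or combine to produce exactly the desired single term—a collapse which is the combinatorial reflection of atomicity of the coset $\at$.
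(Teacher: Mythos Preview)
Your approach differs from the paper's in the choice of factorization. The paper writes $\pa_{y_0}$ as a composition of $b$ blocks each of length $a$, namely $\pa_{(b,\ldots,b+a-1)}\circ\cdots\circ\pa_{(2,\ldots,a+1)}\circ\pa_{(1,\ldots,a)}$, and applies them one at a time. The crucial mechanism is what the paper calls the ``easy invariance argument'': after each block the complete symmetric polynomial that appears is invariant under all but the last operator in the next block, so at every step only one new term survives. This yields an explicit $(b{+}1)$-term expansion of $\pa_{y_0}(h_i(X)g)$; the same method with the irrelevant index $n$ adjoined gives a $b$-term expansion of $\pa_{y_1}(h_{i-1}(X\cup\{n\})g)$, and one simply observes these two expansions differ by the single term $h_i(Y)\pa_{y_0}(g)$. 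No induction on $a$ is used.

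Your transposed factorization $y_0=u_1\cdots u_a$ and induction on $a$ is a legitimate alternative, and the base case $a=1$ is essentially the paper's method specialized. But the inductive step has a gap you flag without resolving. Your heuristic $\pa_{u_1}(x_1^k f)\approx h_{k-1}(x_1,\ldots,x_{b+1})f$ requires $f$ to be $s_1$-invariant; one checks that $f=\pa_v(g)$ is invariant under $s_2,\ldots,s_b$ (since $s_jv$ acquires a right descent in $J$ for $2\le j\le b$), but it is \emph{not} $s_1$-invariant, because $s_1v\in W^J$. Thus applying $\pa_{u_1}$ to the leading piece $h_i(\{1\}\cup Y')\pa_v(g)$ already produces a correction term (via your own base case), and you must then combine it with $\pa_{u_1}$ applied to the inductive correction $\sum_k x_1^k\,\pa_{v_1}(h_{i-k-1}(X'\cup\{n\})g)$ and identify the sum as $\pa_{y_1}(h_{i-1}(X\cup\{n\})g)$. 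Nothing you have set up handles this second application (the argument is not in the right invariant ring), and the appeal to $y_0=s_b\cdot y_1$ does not interface with your blocks $u_j$. The required collapse is an identity of essentially the same strength as the theorem, not mere bookkeeping.
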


%Note that $y_0(h_i(X)) = h_i(Y)$ for all $i \ge 0$ 

As $\pa_p=\pa_{y_0},$ $\underline{p}= y_0, $ and $y_0(h_i(X)) = h_i(Y)$,
this is compatible with \eqref{eq:atomicleibniz},  where $T_{q_1}(h_i(X))= h_{i-1}(X \cup n)$, and $T_{q_k}(h_i(X))$ is zero for all $k > 1$. Most of the terms in \eqref{eq:atomicleibniz} are zero for complete symmetric polynomials, making the formula much easier than the general case.

%\begin{thm} \label{thm:demazureoncompletedelta} Fix $a, b \ge 1$ and let $n = a+b$. Use the notation of \S\ref{subsec:atomicnotation}.  Let $g \in R^{\symm_a \times \symm_b}$ be arbitrary. Let $X = \{1, \ldots, a\}$ and $Y = \{n,n-1,\ldots,n+1-a\}$. Note that $y_0(h_i(X)) = h_i(Y)$ for all $i \ge 0$ (recall $y_0$ from \eqref{eq.y_i}).Then for any $i \ge 0$ and any $g \in R^J$ we have \begin{equation} \label{demazureoncomplete} \pa_{y_0}(h_i(X) \cdot g) = h_i(Y) \cdot \pa_{y_0}(g) + \pa_{y_1}(h_{i-1}(X \cup n) \cdot g). \end{equation}\end{thm}

%This is compatible with \eqref{eq:atomicleibniz}, where $T_{q_1}(h_i(X))= h_{i-1}(X \cup n)$, and $T_{q_k}(h_i(X))$ is zero for all $k > 1$. Most of the terms in \eqref{eq:atomicleibniz} are zero for complete symmetric polynomials, making the formula much easier than the general case.

\begin{proof}
We will do a proof by example, for the example $a = 3$ and $b=4$. The general proof is effectively the same only the notation is more cumbersome.

In this proof, we write $\pa_{123}$ for $\pa_1 \circ \pa_2 \circ \pa_3$ (and not the Frobenius trace associated to the longest element $w_{123}$). We use parenthesization for emphasis, so that $\pa_{(12)3}$ is the same thing as $\pa_{123}$, but emphasizes that $\pa_{(12)3} = \pa_{12} \circ \pa_3$.

Remember that $g$ is invariant under anything except $s_3$, so $\pa_j(g) = 0$ if $j \ne 3$. This implies, for example, that $\pa_3$ kills $\pa_{23}(g)$, and that $\partial_2$ kills $\pa_{3243}(g)$, etcetera.

We claim that
\begin{equation} \label{123ex} \pa_{123}(h_i(123) g) = h_{i-1}(1234) \pa_{23}(g) + h_i(234) \pa_{123}(g).\end{equation}
One proof is to apply the ordinary twisted Leibniz rule repeatedly using \eqref{eq.delh}. After one step we obtain
\[ \pa_{123}(h_i(123) g) = \pa_{12} \left( h_{i-1}(1234) g + h_i(124) \pa_3(g) \right). \]
The first term on the right-hand side is invariant under $s_2$, so it is killed by $\pa_2$. Thus we have
\[ \pa_{123}(h_i(123) g) = \pa_1(\pa_2(h_i(124) \pa_3(g))) = \pa_1(h_{i-1}(1234) \pa_3(g) + h_i(134) \pa_{23}(g)).\]
Again the first term on the right-hand side is invariant under $s_1$, so it is killed by $\pa_1$. A final application of the twisted Leibniz rule to $\pa_1(h_i(134) \pa_{23}(g))$ gives \eqref{123ex}. Essentially, this proof is by iterating the twisted Leibniz rule and arguing that the first term vanishes in every application but the last, because the first term is appropriately invariant. We call this the \emph{easy invariance argument}.

% In the iteration of the (ordinary) twisted Leibniz rule, the only Demazure operator which can be applied to $h_i(123)$ is $\pa_3$, lest the result be zero. For example, $\pa_1 \pa_2 s_3 h_i(123) = 0$ and $\pa_1 s_2 \pa_3 h_i(123) = 0$. \HK{I do not understand the next sentence but agree with the conclusion of the paragraph.} Moreover, if all but one Demazure operator is applied to $g$ then it must be all but the first. For example, $\pa_1 \pa_3(g) = 0$ and $\pa_1 \pa_2(g) = 0$. From this argument, it is straightforward to deduce \eqref{123ex}. We call this argument the \emph{easy invariance argument}.

Note that $\pa_{123}(g)$ is invariant under everything but $s_4$. More generally, $\pa_{12\ldots a}(g)$ is killed by $\pa_j$ for all $j \ne a+1$. This is true when $j=1$ since $\pa_1 \pa_1 = 0$. This is true when $j > a+1$ since $\pa_j \pa_{12\ldots a} = \pa_{12\ldots a} \pa_j$ and $\pa_j(g) = 0$. This is true when $2 \le j \le a$ because $\pa_j \pa_{12\ldots a} = \pa_{12\ldots a} \pa_{j-1}$ and $\pa_{j-1}(g) = 0$.

By the easy invariance argument again, but with indices shifted and $g$ replaced by $\pa_{123}(g)$, we have
\begin{equation} \label{234ex} \pa_{234}(h_i(234) \pa_{123}(g)) = h_{i-1}(2345) \pa_{34123}(g) + h_{i}(345) \pa_{234123}(g). \end{equation}
This is how we treat the second term in the right side of \eqref{123ex}.

Note that all computations above are unchanged by adding new variables to our complete symmetric polynomials which are untouched by any of the simple reflections used by the formula. For example, adding $7$ to every $h_i$ in \eqref{123ex} we get
\begin{equation} \label{1237ex} \pa_{123}(h_i(1237) g) = h_{i-1}(12347) \pa_{23}(g) + h_i(2347) \pa_{123}(g).\end{equation}
In this example, we call $7$ an \emph{irrelevant index}.

Now we examine the first term in  the right side of \eqref{123ex}. Note that $\pa_{23}(g)$ is invariant under all simple reflections except $s_1$ and $s_4$. For the next computation, the index $1$ is irrelevant. The easy invariance argument again implies that
\begin{equation} \pa_{234}(h_{i-1}(1234) \pa_{23}(g)) = h_{i-2}(12345) \pa_{3423}(g) + h_{i-1}(1345) \pa_{23423}(g).
\end{equation}
However, as $23423=32434,$ we have $\pa_{23423}(g) = 0$,  so one has the simpler formula
\begin{equation} \label{oneterm} \pa_{234}(h_{i-1}(1234) \pa_{23}(g)) = h_{i-2}(12345) \pa_{3423}(g). \end{equation}

Overall, we see that
\begin{eqnarray} \label{foofoofoo} \pa_{(234)(123)}(h_i(123)g) &=& h_{i}(345) \pa_{(234)(123)}(g) \\ \nonumber &+& h_{i-1}(2345) \pa_{(34)(123)}(g) \\ \nonumber &+& h_{i-2}(12345) \pa_{(34)(23)}(g). \end{eqnarray}
The pattern is relatively straightforward.
Here's the next one in the pattern:
\begin{eqnarray} \label{barbarbar} \pa_{(345)(234)(123)}(h_i(123)g) &=& h_{i}(456) \pa_{(345)(234)(123)}(g) \\ \nonumber &+& h_{i-1}(3456) \pa_{(45)(234)(123)}(g) \\ \nonumber &+& h_{i-2}(23456) \pa_{(45)(34)(123)}(g) \\ \nonumber &+& h_{i-3}(123456) \pa_{(45)(34)(23)}(g). \end{eqnarray}
The word whose Demazure is applied to $g$ is obtained from the concatenation of triples $(345)(234)(123)$ by removing the first index from some of the triples; more specifically, from a prefix of the set of triples. The indices that get removed are instead added to the complete symmetric polynomial. The reason triples appear is because $a = 3$.

The inductive proof of this pattern is the same as above. One takes \eqref{foofoofoo} and applies $\pa_{345}$. The first term splits in two, giving the first two terms of \eqref{barbarbar}, similar to \eqref{123ex} or \eqref{234ex}. Each other term contributes one term in \eqref{barbarbar}, similar to \eqref{oneterm}.

Note that we could have added the irrelevant index $7$ to every set in sight within \eqref{barbarbar}, without any issues. This will be important later.

Repeating until one applies $\pa_{y_0}$, we calculate $\pa_{y_0}(h_i(X)\cdot g)$:
\begin{eqnarray} \label{applypay0} \pa_{(456)(345)(234)(123)}(h_i(123)g) & = & h_i(567) \pa_{(456)(345)(234)(123)}(g) \\ \nonumber & + & h_{i-1}(4567) \pa_{(56)(345)(234)(123)}(g) \\ \nonumber & + & h_{i-2}(34567)\pa_{(56)(45)(234)(123)}(g) \\ \nonumber & + & h_{i-3}(234567) \pa_{(56)(45)(34)(123)}(g) \\ \nonumber & +  & h_{i-4}(1234567) \pa_{(56)(45)(34)(23)}(g). \end{eqnarray}
The fact that there were four triples is because $b=4$. Note that the first term in the RHS is $h_i(Y)\pa_{y_0}(g).$ 

Let us now compute $\pa_{y_1}(h_{i-1}(1237) \cdot g)$.  Note that $y_1 = (56)(345)(234)(123)$. To compute $\pa_{(345)(234)(123)}(h_{i-1}(1237) g)$, we take \eqref{barbarbar}, add the irrelevant index $7$ to all variable lists, and reduce $i$ by one. Now we need only apply $\pa_{(56)}$ to the result. The key thing to note here is that each $h_{\bullet}(\cdots 567)$ is invariant already under $s_5$ and $s_6$. Thus both operators in $\pa_{(56)}$ simply apply to the $g$ term. From this we can compute $\pa_{y_1}(h_{i-1}(X\cup n)\cdot g)$:
\begin{eqnarray} \pa_{(56)(345)(234)(123)}(h_{i-1}(1237)g) & = &  h_{i-1}(4567) \pa_{(56)(345)(234)(123)}(g) \\ \nonumber & + & h_{i-2}(34567)\pa_{(56)(45)(234)(123)}(g) \\ \nonumber & + & h_{i-3}(234567) \pa_{(56)(45)(34)(123)}(g) \\ \nonumber & +  & h_{i-4}(1234567) \pa_{(56)(45)(34)(23)}(g). \end{eqnarray}
This exactly matches all terms from \eqref{applypay0} except the first term.
Thus the theorem is proven!
\end{proof}

\begin{thm} \label{thm:typeAoverZ} The atomic Leibniz rule and atomic polynomial forcing both hold for atomic cosets $\at \expr [I,M,J]$ when $W_M = \symm_n$ when $R = \Z[x_1, \ldots, x_n]$. \end{thm}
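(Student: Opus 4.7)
The strategy is to bootstrap from the work already done. By the example immediately following \Cref{defn:almostSW}, the defining representation of $\symm_n$ over $\Z$ is an almost-SW realization, so \Cref{thm.AL=PF} applies: it reduces the atomic Leibniz rule and atomic polynomial forcing to the verification of condition \eqref{ALc}, namely a rightward atomic Leibniz rule for $\at$ and each element of a chosen generating set for $R^J$ as an $R^M$-algebra. Fix an atomic coset $\at\expr[I,M,J]$ in $W_M=\symm_n$; in the notation of \S\ref{subsec:atomicnotation} we have $J=\hat{t}$, $I=\hat{s}$, $s=s_b$, $t=s_a$, $a+b=n$, and $R^J=R^{\symm_a\times\symm_b}$.

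I would take as generators the complete symmetric polynomials $c_i:=h_i(x_1,\ldots,x_a)$, $i\geq 1$. It is classical that $R^{\symm_a\times\symm_b}$ is the polynomial ring in the two families $\{c_i\}$ and $\{h_j(x_{a+1},\ldots,x_n)\}$, and the identity
\[ h_k(x_1,\ldots,x_n)=\sum_{i+j=k} h_i(x_1,\ldots,x_a)\,h_j(x_{a+1},\ldots,x_n) \]
together with $h_k(x_1,\ldots,x_n)\in R^M$ lets one recursively solve for each $h_j(x_{a+1},\ldots,x_n)$ as a polynomial in the $c_i$ with coefficients in $R^M$. Hence $\{c_i\}_{i\geq 1}$ generates $R^J$ as an $R^M$-algebra.

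The Leibniz rule for each $c_i$ is precisely what \Cref{thm:demazureoncompletedelta} computes: formula \eqref{demazureoncomplete}, rewritten using $h_i(Y)=\mi{\at}(c_i)$ and the identification of $\pa_{y_k}$ (as a map $R\to R$) with the operator appearing in \eqref{eq:atomicleibniz} for $q_k$, matches \eqref{eq:atomicleibniz} with $T_{q_1}(c_i)=h_{i-1}(x_1,\ldots,x_a,x_n)$ and $T_{q_k}(c_i)=0$ for all $k\geq 2$. The only small check is that $h_{i-1}(x_1,\ldots,x_a,x_n)$ lies in $R^{\rightred(q_1)}$: since $\rightred(q_1)=M\setminus\{1,a,n-1\}$, every $s_j\in W_{\rightred(q_1)}$ either permutes the variables $\{x_1,\ldots,x_a\}$ (when $2\leq j\leq a-1$) or permutes the variables $\{x_{a+1},\ldots,x_{n-1}\}$ not appearing in the polynomial (when $a+1\leq j\leq n-2$), making invariance immediate. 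With condition \eqref{ALc} verified, \Cref{thm.AL=PF} delivers both the atomic Leibniz rule and atomic polynomial forcing, together with the uniqueness of the operators $T_q$ and $T'_q$.

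The technical heart of the argument is already contained in \Cref{thm:demazureoncompletedelta}, whose elementary but intricate iterated Leibniz computation is the only truly delicate step; the present proof is a packaging exercise combining that explicit calculation with the general equivalence machinery of \Cref{thm.AL=PF}, plus the standard description of invariants of $\symm_a\times\symm_b$ via complete symmetric polynomials.
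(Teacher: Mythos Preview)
Your proposal is correct and follows essentially the same approach as the paper: invoke \Cref{thm.AL=PF} via the almost-SW property of the permutation realization over $\Z$, and verify condition \eqref{ALc} using the complete symmetric polynomials as generators together with \Cref{thm:demazureoncompletedelta}. You supply a bit more detail than the paper's two-line proof (justifying that the $h_i(x_1,\ldots,x_a)$ generate $R^J$ over $R^M$, and checking the invariance $T_{q_1}(c_i)\in R^{\rightred(q_1)}$), but the structure is identical.
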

%\HK{If we have $W_M$ to be $S_n$ then $R$ is bigger in general and is of any type.} \NL{Agree with Hankyung}\LP{I think Ben is assuming that $W=W_M$, how to write it more clearly¿}

\begin{proof} Theorem \ref{thm:demazureoncompletedelta} proved property \eqref{ALc} from Theorem \ref{thm.AL=PF} in this case. Thus conditions \eqref{AL} and \eqref{PF} also hold in this case. \end{proof}

\subsection{Reduction to the connected case} \label{ssec:connected}

The previous section proves an atomic Leibniz rule under the assumption $W_M = \symm_n$. Now we do the general case.

Let $W = \symm_n$. An arbitrary atomic coset in $W$ contains the longest element of the reducible Coxeter group $W_M = \symm_{n_1} \times \cdots \times \symm_{n_k}$ where $\sum n_i = n$. It is a coset for $(\hat{s},\hat{t})$, where $s$ and $t$ are simple reflections in the same irreducible component $\symm_{n_i}$ of $W_M$. We can prove polynomial forcing for an arbitrary atomic coset in type $A$ if we can bootstrap the result from $\symm_{n_i}$ to $W_M$. 

In this discussion, there is no difference between type $A$ and a general Coxeter type. Thus let $M$ be finitary, with $s, t \in M$ and $w_M s w_M = t$. Let $\at$ be the atomic $(\hat{s},\hat{t})$-coset containing $w_M$.

Now suppose that $M = C_1 \sqcup C_2 \sqcup \ldots \sqcup C_k$ is a disjoint union of connected components (the simple reflections in $C_i$ commute with those in $C_j$ for $i \ne j$). Suppose without loss of generality that $s \in C_1$, and let $D = C_2 \sqcup \ldots \sqcup C_k$. Then $t \in C_1$ as well, and $t = w_{C_1} s w_{C_1}$. Let $\at'$ denote the atomic $(C_1 \setminus s, C_1 \setminus t)$-coset containing $w_{C_1}$. Then $\at$ and $\at'$ are related by the operation $+D$ described in \cite[\S 4.10]{EKo}.

\begin{lem}\label{lem.connectedPF} With the notation as above, polynomial forcing holds for $\at$ if and only if it holds for $\at'$. \end{lem}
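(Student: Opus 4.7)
The plan is to use Theorem~\ref{thm.AL=PF} to translate polynomial forcing into the atomic Leibniz rule, and then to relate the Leibniz rules for $\at$ and $\at'$ directly. First I would collect the matchings needed in both directions. Because $W_D$ commutes with $W_{M_1}$ and (since $s,t \in C_1$) we have $I_2 = J_2 = M_2 = D$, one finds $w_M = w_{M_1} w_D$, $w_J = w_{J_1} w_D$, $w_I = w_{I_1} w_D$, and hence
\[ \mi{\at} = w_M w_J^{-1} = w_{M_1} w_{J_1}^{-1} = \mi{\at'}, \qquad \pa_{\at} = \pa_{\at'} \]
as an element of $W$ and as an operator on $R$. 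Moreover, the common operator is $R^{M_1}$-linear, since its reduced expression uses only simple reflections from $M_1$. By the $+D$ operation of \cite[\S 4.10]{EKo}, $q' \mapsto q' + D$ is a Bruhat-order preserving bijection between $(I_1,J_1)$-cosets $\le \at'$ and $(I,J)$-cosets $\le \at$ with $\ma{q}w_J^{-1} = \ma{q'}w_{J_1}^{-1}$ and $\rightred(q) = \rightred(q') \cup D$, so $R^{\rightred(q)} = (R^{\rightred(q')})^D$. Finally, since the Frobenius trace maps $\pa^J_M$ and $\pa^{J_1}_{M_1}$ both coincide with this common operator, any dual bases for $R^M \subset R^J$ also serve as dual bases for $R^{M_1} \subset R^{J_1}$; in particular, a free $R^{M_1}$-module basis of $R^{J_1}$ can be chosen inside $R^J$.

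For the direction PF for $\at' \Rightarrow$ PF for $\at$, I would invoke Theorem~\ref{thm.AL=PF} to obtain unique $R^{M_1}$-linear operators $T'_{q'} \colon R^{J_1} \to R^{\rightred(q')}$ satisfying the atomic Leibniz rule for $\at'$. Restricting to $f, g \in R^J \subset R^{J_1}$ yields an identity of precisely the shape required for $\at$, once we check $T'_{q'}(f) \in R^{\rightred(q)} = (R^{\rightred(q')})^D$. The needed $W_D$-invariance follows by uniqueness: for any $d \in W_D$, $d$ commutes with $\pa_{\at'}, \mi{\at'}$ and each $\pa_{\ma{q'}w_{J_1}^{-1}}$ (all built from $W_{M_1}$) and fixes $f,g \in R^J \subset R^D$, so applying $d$ to the Leibniz identity produces the same identity with $d \cdot T'_{q'}(f)$ in place of $T'_{q'}(f)$; uniqueness forces equality. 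Setting $T_q := T'_{q'}|_{R^J}$ delivers the atomic Leibniz rule, hence polynomial forcing, for $\at$.

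For the converse, fix $h \in R^J$. PF for $\at$ gives the atomic Leibniz identity
\[ \pa_{\at}(hg) = \mi{\at}(h)\,\pa_{\at}(g) + \sum_{q < \at} \pa_{\ma{q} w_J^{-1}}(T_q(h)\cdot g) \]
for every $g \in R^J$. Both sides are $R^{M_1}$-linear functions of $g$, using the $R^{M_1}$-linearity of every Demazure operator appearing (all built from $W_{M_1}$) together with the commutativity of $R$. Since $R^{J_1}$ is the $R^{M_1}$-span of $R^J$ by the first paragraph, the identity extends to all $g \in R^{J_1}$, yielding a Leibniz-shaped identity for $\at'$ applied to each $h \in R^J$. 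Because a set of $R^{M_1}$-algebra generators of $R^{J_1}$ can be chosen inside $R^J$ (again, by the first paragraph), criterion \eqref{ALc} of Theorem~\ref{thm.AL=PF} then delivers the full atomic Leibniz rule for $\at'$, hence polynomial forcing for $\at'$. The main conceptual step, bootstrapping from $R^J$ up to the larger $R^{J_1}$, is enabled by the identification of the two trace maps, which simultaneously ensures the compatibility of Frobenius bases and the $R^{M_1}$-linearity of the lower Demazure operators.
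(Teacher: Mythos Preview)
Your proof is correct, but note that the paper gives no explicit proof of this lemma at all: it states Lemma~\ref{lem.connectedPF}, then states the companion Lemma~\ref{lem.connectedAL}, and follows the latter with a brief sketch (``straightforward and left to the reader'') listing the relevant ingredients---the $+D$ bijection on cosets and reduced expressions, the equality $\pa_{\ma{q}w_{\hat t}^{-1}} = \pa_{\ma{q'}w_{C_1\setminus t}^{-1}}$, and the matching of dual bases. So a line-by-line comparison is not possible.

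Your route is to translate polynomial forcing into the atomic Leibniz rule via Theorem~\ref{thm.AL=PF}, establish the Leibniz rule for $\at$ if and only if for $\at'$ (this is essentially Lemma~\ref{lem.connectedAL}, which you re-derive using exactly the ingredients the paper lists), and translate back. This is valid; the $W_D$-invariance argument using uniqueness in Proposition~\ref{prop:AL=PF} is a nice touch and is the one genuinely new step you supply. The only cost is that Theorem~\ref{thm.AL=PF} requires an almost-SW realization, whereas the paper's phrasing (``no difference between type $A$ and a general Coxeter type'') hints that a direct comparison of $\End_{<\at}(B_\at)$ and $\End_{<\at'}(B_{\at'})$, avoiding Theorem~\ref{thm.AL=PF}, may have been intended. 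Since the lemma is only applied in the almost-SW setting (Theorem~\ref{thm:typeAoverZpart2}), your hypothesis is harmless in context.

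A minor notational point: you use $M_1, I_1, J_1, I_2, J_2, M_2$ without definition; in the paper's notation these are $C_1$, $C_1\setminus s$, $C_1\setminus t$, $D$, $D$, $D$. It would be cleaner to either adopt the paper's notation or define your shorthand explicitly.
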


\begin{lem}\label{lem.connectedAL} With the notation as above, an atomic Leibniz rule holds for $\at$ if and only if it holds for $\at'$. \end{lem}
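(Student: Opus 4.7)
The plan is to deduce the equivalence from Lemma~\ref{lem.connectedPF} via the equivalence between the atomic Leibniz rule and polynomial forcing. In the almost-SW settings in which this lemma will be applied (in particular $R = \Z[x_1,\ldots,x_n]$), Theorem~\ref{thm.AL=PF} says that an atomic Leibniz rule for $\at$ is equivalent to polynomial forcing for $\at$, and similarly for $\at'$. Chaining these two equivalences with Lemma~\ref{lem.connectedPF} immediately yields ``AL for $\at$ if and only if AL for $\at'$''.

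Before invoking this chain, I would spell out the compatibility between the data for $\at$ and $\at'$, so that the polynomial-forcing equivalence is manifestly the statement we want. Using the commuting decompositions $w_M = w_{C_1} w_D$ and $w_J = w_{J'} w_D$, one computes $\mi{\at} = w_M w_J = w_{C_1} w_{J'} = \mi{\at'}$. More generally, under the $+D$ bijection between $(I',J')$-cosets in $W_{C_1}$ and $(I,J)$-cosets in $W_M$, one has $\mi{q'+D} = \mi{q'}$ and $\ma{q'+D}\, w_J^{-1} = \ma{q'}\, w_{J'}^{-1}$, so the Demazure operators appearing in the atomic Leibniz formulas for $\at$ and $\at'$ coincide as operators on $R$. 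Because $W_D$ commutes with $W_{C_1}$, one also has $R^J = (R^{J'})^{W_D}$, $R^M = (R^{C_1})^{W_D}$, and $R^{\rightred(q'+D)} = (R^{\rightred(q')})^{W_D}$, so the two AL identities really do line up term by term on $R^J$.

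The direction $\at' \Rightarrow \at$ admits a direct proof as well: restrict $T^{\at'}_{q'} \colon R^{J'} \to R^{\rightred(q')}$ to $R^J$ to obtain candidate operators $T^\at_q$, and deduce the AL identity for $\at$ on $f,g \in R^J \subset R^{J'}$ from the AL identity for $\at'$ via the operator identifications above. The only subtlety in that direct argument is showing that $T^{\at'}_{q'}(f)$ is $W_D$-invariant when $f \in R^J$; this follows by applying any $w \in W_D$ to the AL formula (noting that $w$ commutes with every Demazure operator and with $\mi{\at'}$, since these all live in $W_{C_1}$) and invoking the uniqueness of the $T^{\at'}_{q'}$ from Theorem~\ref{thm.AL=PF}. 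The main obstacle is the converse direction, since there is no canonical way to extend operators from $R^J$ to the strictly larger ring $R^{J'}$; this is precisely why routing both directions through polynomial forcing is the cleanest strategy.
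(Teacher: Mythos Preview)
Your route differs from the paper's. The paper intends a direct transfer of the Leibniz identity via the $+D$ bijection: the single proof block with its list of ingredients is meant to cover both Lemma~\ref{lem.connectedPF} and Lemma~\ref{lem.connectedAL} by the same hands-on argument. You instead pass through polynomial forcing and Theorem~\ref{thm.AL=PF}. Under an almost-SW hypothesis your chain is formally valid, but it leans on Lemma~\ref{lem.connectedPF} as a black box, and that lemma is no more proven in the paper than the one at hand; it also restricts the generality of the statement.

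The more substantive gap is that routing through polynomial forcing does not actually escape the obstacle you name. Going from $\at$ to $\at'$ requires extending from $R^J$ to the strictly larger ring $R^{J'}$, and this is equally an issue for Lemma~\ref{lem.connectedPF}: polynomial forcing for $\at'$ must hold for all $f \in R^{J'}$, not merely for $f \in R^J$. The ingredient the paper highlights resolves this directly. The claim that dual bases for $R^J$ over $R^M$ also serve as dual bases for $R^{J'}$ over $R^{C_1}$ means one may choose an $R^{C_1}$-basis $\{f_i\}$ of $R^{J'}$ with each $f_i \in R^J$. Given $T^{\at}_q$ on $R^J$, define $T^{\at'}_{q'}$ on $R^{J'}$ by extending $R^{C_1}$-linearly from this basis. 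Every operator appearing in the Leibniz formula ($\pa_{\at}$, $\mi{\at}$, and each $\pa_{\ma{q} w_J^{-1}} = \pa_{\ma{q'} w_{J'}^{-1}}$) involves only simple reflections in $C_1$ and is therefore $R^{C_1}$-linear, so expanding arbitrary $f,g \in R^{J'}$ in the basis reduces the identity for $\at'$ to the known identity for $\at$ on pairs $(f_i,g_j) \in R^J \times R^J$. That is the direct argument the paper has in mind, and it needs neither Lemma~\ref{lem.connectedPF} nor the almost-SW hypothesis.
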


\begin{proof} The proof is straightforward and left to the reader, but we wish to point out the available ingredients. Many basic properties of the operator $+D$ are given in \cite[\S 4.10]{EKo}. There is a bijection between cosets $q < \at$ and cosets $q' < \at'$, and also a bijection between their reduced expressions. Note that $\pa_{\ma{q}w_{\hat{t}^{-1}}} = \pa_{\ma{q'}w_{C_1\setminus t}^{-1}}$ as operators $R \to R$. Finally, dual bases for the Frobenius extension $R^M \subset R^{M \setminus t}$ can also be chosen as dual bases for the Frobenius extension $R^{C_1} \subset R^{C_1 \setminus t}$. 
\end{proof}

% \Cref{ALirreducible} shows that polynomial forcing and an atomic Leibniz rule hold in (irreducible) symmetric groups. Thus \Cref{lem.connectedAL} yield the following corollary.

%\begin{cor}   Polynomial forcing holds in type A.\end{cor}
%\begin{cor}  An atomic Leibniz rule holds in type A.\end{cor}

%\BE{I think this is what HK wanted, minus statements we don't know how to prove using multiplicativity. See comments afterwrads}

\begin{thm} \label{thm:typeAoverZpart2}
The atomic Leibniz rule and polynomial forcing hold for any atomic coset in type $A_{n-1}$ when $R = \Z[x_1, \ldots, x_n]$.
\end{thm}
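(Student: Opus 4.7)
The plan is to combine Theorem \ref{thm:typeAoverZ}, which handles the case where $W_M$ is the full ambient symmetric group, with the reduction lemmas already established in this section. Specifically, given an arbitrary atomic coset $\at \expr [I,M,J]$ in $W = \symm_n$, I would write $M$ as a disjoint union of its connected components $C_1 \sqcup \cdots \sqcup C_k$. Because $w_M$ is the product of the longest elements $w_{C_i}$ of the individual parabolic factors, conjugation by $w_M$ preserves each component; in particular the simple reflections $s,t$ related by $t = w_M s w_M$ must lie in a common component, which I will take to be $C_1$. Lemmas \ref{lem.connectedAL} and \ref{lem.connectedPF} then reduce the entire question to the atomic $(C_1\setminus s, C_1\setminus t)$-coset $\at'$ containing $w_{C_1}$, whose $W_{M'} = W_{C_1} \cong \symm_{n'}$ for $n' = |C_1|+1$ is now irreducible.

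The next step is to handle the fact that $n'$ is in general strictly less than $n$, so that Theorem \ref{thm:typeAoverZ} does not apply verbatim to the realization $\Z[x_1,\ldots,x_n]$. Here I would invoke Remark \ref{rem:alrawesome}: the atomic Leibniz rule for $\at'$ depends only on the restriction of the realization to $W_{M'} = W_{C_1}$. After reindexing, this restricted realization is the $\symm_{n'}$-invariant enlargement of the permutation realization of $\symm_{n'}$ (on the $n'$ variables permuted nontrivially by $W_{C_1}$) by the remaining $n-n'$ variables on which $W_{C_1}$ acts trivially. Theorem \ref{thm:typeAoverZ}, applied with $n$ replaced by $n'$, gives the atomic Leibniz rule for $\at'$ over the permutation realization of $\symm_{n'}$, and Lemma \ref{lem:changerealization} (enlargement case) propagates this to the enlargement, hence to $\at'$ over $\Z[x_1,\ldots,x_n]$. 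Running Lemma \ref{lem.connectedAL} in the reverse direction then yields the atomic Leibniz rule for $\at$ itself.

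To finish, I would recall that the defining representation of $\symm_n$ over $\Z$ is an almost-SW realization (the example recorded immediately after Definition \ref{defn:almostSW}), so that Theorem \ref{thm.AL=PF} upgrades the atomic Leibniz rule into atomic polynomial forcing for $\at$. There is no genuine obstacle in this argument; the main content has already been accumulated in Theorem \ref{thm:typeAoverZ}, and the present step is essentially bookkeeping. The only mild care required is to match up the three realizations in play (the full ambient $\Z[x_1,\ldots,x_n]$, its restriction to $W_{C_1}$, and the pure permutation realization of $\symm_{n'}$), which is exactly what Lemma \ref{lem:changerealization} together with Remark \ref{rem:alrawesome} is designed for.
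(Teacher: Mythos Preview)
Your proposal is correct and follows essentially the same route as the paper's proof: reduce to the connected component via Lemmas \ref{lem.connectedAL} and \ref{lem.connectedPF}, identify the restricted realization as a $W$-invariant enlargement of the permutation realization of the smaller symmetric group, and then invoke Theorem \ref{thm:typeAoverZ} together with Lemma \ref{lem:changerealization}. The only cosmetic difference is that you make explicit the final passage from the atomic Leibniz rule to polynomial forcing via the almost-SW property and Theorem \ref{thm.AL=PF}, whereas the paper leaves this step implicit by citing both reduction lemmas alongside Theorem \ref{thm:typeAoverZ} (which already packages both conclusions).
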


\begin{proof} The restriction of the permutation realization to any $\symm_{n_i} \subset \symm_n$  is a $W$-invariant enlargement of the permutation realization of $\symm_{n_i}$.
Thus the result follows from the previous two lemmas, Theorem \ref{thm:typeAoverZ}, and Lemma \ref{lem:changerealization}. \end{proof}

Applying \Cref{lem:changerealization}, we also deduce the atomic Leibniz rule for a host of other realizations, including when $R = \Bbbk[x_1, \ldots, x_n]$ for any commutative ring $\Bbbk$. %\BE{can do any ring $\Bbbk$, but can do more than this too, taking quotients and extensions etcetera.... maybe better not to state explicitly...}

% \begin{cor}
% The atomic Leibniz rule and polynomial forcing hold  in type $A_{n-1}$ over $R=\Bbbk[x_1,\ldots,x_n]$. 
% \end{cor}

\printbibliography
\end{document}